\newtheorem{thm}{Theorem}
\newtheorem{lem}[thm]{Lemma}
\newtheorem{prop}[thm]{Proposition}
\newtheorem{claim}[thm]{Claim}
\newtheorem{cor}[thm]{Corollary}
\theoremstyle{definition}
\newtheorem{dfn}[thm]{Definition} 
\newtheorem{ex}[thm]{Example}
\newtheorem{rmk}[thm]{Remark}
\numberwithin{thm}{section}
\numberwithin{equation}{section}
\newcommand{\Hom}{\operatorname{Hom}}
\newcommand{\Ext}{\operatorname{Ext}}
\newcommand{\Spec}{\operatorname{Spec}}
\newcommand{\ob}{\operatorname{ob}}
\newcommand{\Hilb}{\operatorname{Hilb}}
\newcommand{\HF}{\operatorname{HF}}
\newcommand{\red}{\operatorname{red}}
\newcommand{\im}{\operatorname{im}}
\renewcommand{\div}{\operatorname{div}}
\newcommand{\Pic}{\operatorname{Pic}}
\newcommand{\eff}{\operatorname{NE}}
\newcommand{\mori}{\overline{\operatorname{NE}}}
\newcommand{\amp}{\operatorname{Amp}}
\newcommand{\ns}{\operatorname{NS}}
\newcommand{\coker}{\operatorname{coker}}
\newcommand{\car}{\operatorname{char}}
\newcommand{\ov}[1]{\overline{#1}}
\renewcommand{\theenumi}{\arabic{enumi}}
\newcommand{\mapright}[1]{%
\smash{\mathop{%
\hbox to 1cm{\rightarrowfill}}\limits^{#1}}}
\newcommand{\mapleft}[1]{%
\smash{\mathop{%
\hbox to 1cm{\leftarrowfill}}\limits^{#1}}}
\newcommand{\mapdown}[1]{\Big\downarrow
\llap{$\vcenter{\hbox{$\scriptstyle#1\,$}}$ }}
\newcommand{\mapup}[1]{\Big\uparrow
\rlap{$\vcenter{\hbox{$\scriptstyle#1$}}$ }}
\title[Obstructions to deforming curves on a $3$-fold, III]{
Obstructions to deforming curves on a 3-fold, III:\\
Deformations of curves lying on a $K3$ surface
}
\author{Hirokazu Nasu}
\date{}
\subjclass[2010]{Primary 14C05; Secondary 14H10, 14D15}
\keywords{Hilbert scheme, infinitesimal deformation,
obstruction, $K3$ surface, Fano threefold}
\address{
Department of Mathematical Sciences,
Tokai University,
4-1-1 Kitakaname, Hiratsuka, 
Kanagawa 259-1292, JAPAN}
\email{nasu@tokai-u.jp}
\begin{document}


\begin{abstract}
We study the deformations of a smooth curve $C$
on a smooth projective $3$-fold $V$, assuming the presence of
a smooth surface $S$ satisfying $C \subset S \subset V$.
Generalizing a result of Mukai and Nasu,
we give a new sufficient condition
for a first order infinitesimal deformation of $C$ in $V$ 
to be primarily obstructed. 
In particular, when $V$ is Fano and $S$ is $K3$, 
we give a sufficient condition for $C$
to be (un)obstructed in $V$,
in terms of $(-2)$-curves and elliptic curves on $S$.
Applying this result, we prove that
the Hilbert scheme $\Hilb^{sc} V_4$
of smooth connected curves
on a smooth quartic $3$-fold $V_4 \subset \mathbb P^4$
contains infinitely many
generically non-reduced irreducible components,
which are variations of Mumford's example for $\Hilb^{sc} \mathbb P^3$.
\end{abstract}

\maketitle


\section{Introduction}

Let $V$ be a smooth projective $3$-fold over 
an algebraically closed field $k$.
This paper is a sequel to the preceding papers
\cite{Mukai-Nasu,Nasu4}, in which 
the embedded deformations of a smooth curve $C$ in $V$
have been studied
under the presence of an intermediate 
smooth surface $S$ satisfying $C \subset S \subset V$.
As is well known, first order (infinitesimal) deformations
$\tilde C \subset V \times \Spec k[t]/(t^2)$ of $C$ in $V$ 
are in one-to-one correspondence with
global sections $\alpha$ of the normal bundle $N_{C/V}$ of $C$ in $V$.
Then the obstruction $\ob(\alpha)$
to lifting $\tilde C$ to a second order deformation 
$\tilde {\tilde C} \subset V \times \Spec k[t]/(t^3)$ of $C$ in $V$
is contained in $H^1(C,N_{C/V})$, 
and $\ob(\alpha)$ is computed as
a cup product $\alpha \cup \alpha$ by the map
$$
H^0(C,N_{C/V})\times H^0(C,N_{C/V}) \overset{\cup}\longrightarrow H^1(C,N_{C/V})
$$
(cf.~Theorem~\ref{thm:original cup product}).
It is generally difficult to compute $\ob(\alpha)$ directly.
Mukai and Nasu \cite{Mukai-Nasu} introduced {\em the exterior components}
of $\alpha$ and $\ob(\alpha)$, 
which are defined as
the images of $\alpha$ and $\ob(\alpha)$
in $H^i(C,N_{S/V}\big{\vert}_C)$ ($i=0,1$) 
by the natural projection
$\pi_{C/S}: N_{C/V} \rightarrow N_{S/V}\big{\vert}_C$,
and denoted by
$\pi_{C/S}(\alpha)$ and $\ob_{S}(\alpha)$, respectively
(cf.~\S\ref{subsec:exterior}).
They gave a sufficient condition for $\ob_S(\alpha)$ to be nonzero,
which implies the non-liftability of $\tilde C$ to any $\tilde {\tilde C}$.

In this paper, we generalize their result and give a weaker
condition for $\ob_S(\alpha)\ne 0$.
Let $\tilde C$ or $\alpha \in H^0(C,N_{C/V})$ be a first order 
deformation of $C$ in $V$, and 
$\pi_{C/S}(\alpha) \in H^0(C,N_{S/V}\big{\vert}_C)$ 
the exterior component of $\alpha$.
Suppose that the image of $\pi_{C/S}(\alpha)$
in $H^0(C,N_{S/V}(mE)\big{\vert}_C)$
lifts to a section
$$
\beta \in H^0(S,N_{S/V}(mE))
\qquad
$$
for an integer $m \ge 1$ and an effective Cartier divisor $E$ on $S$.
In other words, we have
$\pi_{C/S}(\alpha)= \beta\big{\vert}_C$ in $H^0(C,N_{S/V}(mE)\big{\vert}_C)$.
Here $\beta$ is called
an {\em infinitesimal deformations with a pole} (along $E$)
of $S$ in $V$ (cf.~\S\ref{subsec:with pole}).
The following is a generalization of \cite[Theorem 2.2]{Mukai-Nasu},
in which, it was assumed that $m=1$, 
$E$ is smooth and irreducible with
negative self-intersection number $E^2<0$ on $S$,
and furthermore, $E$ was assumed to be a $(-1)$-curve on $S$
(i.e.~$E \simeq \mathbb P^1$ and $E^2=-1$) in its application.

\begin{thm}\label{thm:main1}
  Let $\tilde C$, $\alpha$, $\beta$ be as above.
  Suppose that the natural map
  $H^1(S,\mathcal O_S(kE))\rightarrow H^1(S,\mathcal O_S((k+1)E))$
  is injective for every integer $k \ge 1$.
  If the following conditions are satisfied,
  then the exterior component 
  $\ob_S(\alpha)$ of $\ob(\alpha)$ is nonzero:
  \begin{enumerate}
    \renewcommand{\theenumi}{{\alph{enumi}}}
    \item the restriction map 
    $
    H^0(S,\Delta) \overset{\vert_E}\longrightarrow 
    H^0(E,\Delta\big{\vert}_E)
    $
    is surjective for $\Delta:=C+K_V\big{\vert}_S-2mE$, a divisor on $S$,
    and
    \item we have
    $$
    m\partial_E(\beta\big{\vert}_E) \cup
    \beta\big{\vert}_E \ne 0 
    \qquad
    \mbox{in}
    \qquad
    H^1(E,N_{S/V}((2m+1)E-C)\big{\vert}_E),
    $$
    where $\beta\big{\vert}_E \in H^0(E,N_{S/V}(mE)\big{\vert}_E)$
    is the principal part of $\beta$ along the pole $E$,
    and $\partial_E$ is the coboundary map of the exact sequence
    \begin{equation}
      \label{ses:normal bundle of E}
      [0 \longrightarrow \underbrace{N_{E/S}}_{\simeq \mathcal O_E(E)}
	\longrightarrow N_{E/V}
	\overset{\pi_{E/S}}\longrightarrow N_{S/V}\big{\vert}_E
	\longrightarrow 0]
      \otimes_{\mathcal O_E} \mathcal O_E(mE).   
    \end{equation}
  \end{enumerate}
\end{thm}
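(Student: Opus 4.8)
The plan is to reduce the non-vanishing of $\ob_S(\alpha)$ in $H^1(C,N_{S/V}|_C)$ to a residue computation along $E$, and then to detect that residue by Serre duality. The starting point is Theorem~\ref{thm:original cup product}, which identifies $\ob(\alpha)$ with the self-cup product $\alpha\cup\alpha$ and hence its exterior component with $\pi_{C/S}(\alpha\cup\alpha)$. Since $\pi_{C/S}(\alpha)=\beta|_C$ holds in $H^0(C,N_{S/V}(mE)|_C)$, one can represent $\ob_S(\alpha)$ by a \v{C}ech $1$-cochain whose entries are built out of the surface-level section $\beta$ together with the interior component of $\alpha$, i.e.\ its image in $N_{C/S}\simeq\mathcal O_C(C)$, which records how $C$ sits inside $S$. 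A priori such a cochain carries a pole of order $2m$ along $E$, coming from the product of two sections each with an order-$m$ pole.

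The technical heart is to isolate the principal part of this pole. Because $\beta|_C$ has no pole — it is the image of the honest section $\pi_{C/S}(\alpha)\in H^0(C,N_{S/V}|_C)$ — the polar part of the cochain is forced into sections vanishing along $C$; together with a Leibniz-type expansion of $\beta$ along its pole $E$ (which is where the coefficient $m$ and the coboundary $\partial_E$ of \eqref{ses:normal bundle of E} enter, and which also accounts for the extra order of pole), the leading coefficient of the pole turns out to be exactly $m\,\partial_E(\beta|_E)\cup\beta|_E\in H^1(E,N_{S/V}((2m+1)E-C)|_E)$. I expect this to be the main obstacle: it demands a careful cocycle bookkeeping with poles to pin down the numerical coefficient $m$ and the precise twist $(2m+1)E-C$, and to show that every lower-order polar term, as well as the discrepancy between the pole-carrying cochain and an honest representative of $\ob_S(\alpha)$, is absorbed into subgroups governed by the groups $H^1(S,\mathcal O_S(kE))$. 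The hypothesis that $H^1(S,\mathcal O_S(kE))\to H^1(S,\mathcal O_S((k+1)E))$ be injective for every $k\ge 1$ is precisely what prevents these correction terms from cancelling the leading residue: running a descending induction on the pole order then shows that $\ob_S(\alpha)\ne 0$ as soon as $m\,\partial_E(\beta|_E)\cup\beta|_E\ne 0$.

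It remains to match the two detection problems via Serre duality. On the curve $E$, adjunction gives $\omega_E=\mathcal O_E((K_S+E)|_E)$ and $K_S=(K_V+S)|_S$, whence $H^1(E,N_{S/V}((2m+1)E-C)|_E)^\vee\cong H^0(E,\Delta|_E)$ with $\Delta=C+K_V|_S-2mE$; likewise $H^1(C,N_{S/V}|_C)^\vee\cong H^0(C,(K_V|_S+C)|_C)$, into which $H^0(C,\Delta|_C)$ embeds by multiplication by the section of $\mathcal O_C(2mE|_C)$. The residue formula of the previous paragraph should then promote to the assertion that, for every $\gamma\in H^0(S,\Delta)$, the Serre pairing $\langle\ob_S(\alpha),\gamma|_C\rangle_C$ equals, up to a non-zero constant, $\langle m\,\partial_E(\beta|_E)\cup\beta|_E,\gamma|_E\rangle_E$ — the point being that the residue computation requires the test section to extend from $E$ over the ambient surface $S$. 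Condition (b) supplies a class $\bar\gamma\in H^0(E,\Delta|_E)$ pairing non-trivially with $m\,\partial_E(\beta|_E)\cup\beta|_E$; condition (a) lifts $\bar\gamma$ to $\gamma\in H^0(S,\Delta)$ with $\gamma|_E=\bar\gamma$; and the matching then yields $\langle\ob_S(\alpha),\gamma|_C\rangle_C\ne 0$, so $\ob_S(\alpha)\ne 0$. Specializing to $m=1$ and $E$ a $(-1)$-curve, where the injectivity hypothesis holds automatically because $\mathcal O_{\mathbb P^1}(-k)$ has no sections, this recovers \cite[Theorem 2.2]{Mukai-Nasu}.
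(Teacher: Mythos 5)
Your plan is essentially the paper's own proof: the pairing identity you posit, $\langle\ob_S(\alpha),\gamma\vert_C\rangle_C=\pm\langle m\,\partial_E(\beta\vert_E)\cup\beta\vert_E,\gamma\vert_E\rangle_E$ for $\gamma\in H^0(S,\Delta)$, is exactly the Serre-adjoint form of the paper's Steps 2--3, which instead cup $\overline{\ob_S(\alpha)}_{(2m)}$ with $\mathbf k_C$, identify the result with $(m\,\partial_E(\beta\vert_E)\cup\beta\vert_E)\cup\mathbf k_E$ in $H^2(S,N_{S/V}(2mE-C))$, and use condition (a) to make $\cup\,\mathbf k_E$ injective by duality. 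The ``technical heart'' you correctly isolate but do not carry out --- the pole expansion producing the factor $m$ and the coboundary $\partial_E$, with the injectivity hypothesis making the principal part along $E$ well defined --- is precisely the content of Proposition~\ref{prop:key} on polar $d$-maps together with Lemma~\ref{lem:exterior} (which gives $\ob_S(\alpha)=d_S(\beta)\big\vert_C\cup\pi_{C/S}(\alpha)$), so your outline is correct but leaves that central cocycle computation unproved.
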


The relation between $\alpha$ and $\beta\big{\vert}_E$ is explained
with Figure~\ref{fig:relation} in \S\ref{sect:obstruction}.

\medskip

Given a projective scheme $V$,
let $\Hilb^{sc} V$ denote the Hilbert scheme
of smooth connected curves in $V$.
Mumford~\cite{Mumford} first proved that 
$\Hilb^{sc} \mathbb P^3$ contains
a generically non-reduced (irreducible) component.
Later, many examples of such non-reduced 
components of $\Hilb^{sc} \mathbb P^3$
were found in \cite{Kleppe87,Ellia87,Gruson-Peskine,Floystad,Nasu1}, etc.
More recently, Mumford's example was generalized in \cite{Mukai-Nasu}
and it was proved that for many uniruled $3$-folds $V$,
$\Hilb^{sc} V$ contains infinitely many generically non-reduced components.
(See \cite{Vakil} for a different generalization.)
In the construction of the components, $(-1)$-curves $E \subset V$ 
on a surface $S \subset V$ play a very important role.
In this paper, as an application we study the deformations of curves $C$
on a smooth Fano $3$-fold $V$ when
$C$ is contained in a smooth $K3$ surface $S\subset V$.
On a $K3$ surface, $(-2)$-curves $E$
(i.e.~$E \simeq \mathbb P^1$ and $E^2=-2$)
and elliptic curves $F$ (then $F^2=0$)
play a role very similar to that of a $(-1)$-curve.
If we have $m=1$ in the exact sequence ~\eqref{ses:normal bundle of E},
then the sheaf homomorphism
$\pi_{E/S}\otimes_{\mathcal O_E} \mathcal O_E(E)$ 
tensored with $\mathcal O_E(E)$ induces a map 
(called the ``$\pi$-map'' for $(E,S)$)
\begin{equation}
  \label{map:pi-map}
  \pi_{E/S}(E): 
  H^0(E,N_{E/V}(E))
  \longrightarrow
  H^0(E,N_{S/V}(E)\big{\vert}_E)
\end{equation}
on the cohomology groups.
From now on, we assume that $\car(k)=0$.

\begin{thm}\label{thm:main2}
 Let $V$ be a smooth Fano $3$-fold, 
 $S \subset V$ a smooth $K3$ surface, and
 $C \subset S$ a smooth connected curve,
 and put $D:=C+K_V\big{\vert}_S$ a divisor on $S$.
 Suppose that there exists a first order deformation $\tilde S$ of $S$
 which does not contain any first order deformations $\tilde C$ of $C$.
  \begin{enumerate}
    \item If $D \ge 0$ and
    there exist no $(-2)$-curves and no elliptic curves on $S$,
    or more generally, if $H^1(S,D)=0$, 
    then $\Hilb^{sc} V$ is nonsingular at $[C]$.
    \item If $D \ge 0$, $D^2 \ge 0$
    and there exists a $(-2)$-curve $E$ on $S$
    such that $E.D=-2$ and $H^1(S,D-3E)=0$,
    then we have $h^1(S,D)=1$.
    If moreover, $\pi_{E/S}(E)$ is not surjective, then
    $\Hilb^{sc} V$ is singular at $[C]$.
    \item If there exists 
    an elliptic curve $F$
    on $S$ such that $D \sim mF$ for an integer $m \ge 2$,
    then we have $h^1(S,D)=m-1$.
    If moreover, $\pi_{F/S}(F)$ is not surjective, 
    then $\Hilb^{sc} V$ is singular at $[C]$.
  \end{enumerate}
\end{thm}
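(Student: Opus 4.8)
The plan is to reduce the statements in~(1)--(3) to Theorem~\ref{thm:main1} and to the cohomology of linear systems on a $K3$ surface. First I would record the consequences of $K_S=\mathcal O_S$: adjunction gives $N_{S/V}\cong\mathcal O_S(-K_V|_S)$, $N_{C/S}\cong\mathcal O_C(C)\cong K_C$, $N_{S/V}|_C\cong\mathcal O_C(-K_V|_C)$ and $N_{S/V}(-C)\cong\mathcal O_S(-D)$, so Serre duality gives $H^1(S,N_{S/V}(-C))^\vee\cong H^1(S,\mathcal O_S(D))$. Since $\car(k)=0$ and $-K_V|_S$ is ample, Kodaira vanishing gives $H^i(S,N_{S/V})=0$ for $i>0$, and Kawamata--Viehweg vanishing gives $H^i(S,N_{S/V}(E))=0$ and $H^i(S,N_{S/V}(F))=0$ for $i>0$ (since $-K_V|_S+E$ and $-K_V|_S+F$ are nef and big --- in case~(2) after disposing of the trivial case $(-K_V).E\le1$, where $N_{S/V}(E)|_E$ has no nonzero section so that $\pi_{E/S}(E)$ is vacuously surjective, and in case~(3) because $F^2=0$). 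I would also use $h^1(\mathcal O_S)=0$ and, for an elliptic curve $F$, that $\mathcal O_F(F)\cong K_F\cong\mathcal O_F$ and that $|F|$ is a base-point-free pencil, so that $\mathcal O_S(kF)$ is the pullback of $\mathcal O_{\mathbb P^1}(k)$ under the associated elliptic fibration.

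For~(1) I would first note that if $D\ge0$ and $S$ carries no $(-2)$-curves and no elliptic curves, then $H^1(S,D)=0$: without $(-2)$-curves every effective divisor is nef, a nef effective divisor with $D^2>0$ is big so that $H^1=0$ by Kawamata--Viehweg, and $D^2=0$ forces $D$ to be a positive multiple of an elliptic curve, whence $D=0$ and $H^1(S,\mathcal O_S)=0$. So assume $H^1(S,D)=0$. The hypothesis that $\tilde S$ contains no $\tilde C$ says that the restriction of the section $b\in H^0(S,N_{S/V})$ defining $\tilde S$ is not in the image of $\pi_{C/S}\colon H^0(C,N_{C/V})\to H^0(C,N_{S/V}|_C)$; since that image is the kernel of the one-dimensional coboundary $\delta\colon H^0(C,N_{S/V}|_C)\to H^1(C,N_{C/S})\cong H^1(C,K_C)$, the map $\delta$ is surjective, so $H^1(C,N_{C/S})\to H^1(C,N_{C/V})$ is the zero map and $\pi_{C/S}\colon H^1(C,N_{C/V})\to H^1(C,N_{S/V}|_C)$ is injective. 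Hence $\ob(\alpha)=0$ if and only if $\ob_S(\alpha)=0$, at every order. Now $H^1(S,N_{S/V}(-C))=H^1(S,\mathcal O_S(D))^\vee=0$ makes $H^0(S,N_{S/V})\to H^0(C,N_{S/V}|_C)$ surjective, so the exterior part of every deformation of $C$ extends, at each order, to a deformation of $S$; since $H^1(S,N_{S/V})=0$ makes $S$ unobstructed in $V$, these extensions lift, so all exterior obstructions $\ob_S$ vanish. Therefore every obstruction for $C$ in $V$ vanishes and $\Hilb^{sc} V$ is nonsingular at $[C]$.

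For~(2) and~(3) the numerical assertions come from restricting to the relevant curve repeatedly. In~(2), $E^2=-2=E.D$ gives $\deg\mathcal O_E(D-kE)=2(k-1)$; feeding $H^1(S,D-3E)=0$ through $0\to\mathcal O_S(D-(k+1)E)\to\mathcal O_S(D-kE)\to\mathcal O_E(D-kE)\to0$ for $k=2,1,0$ successively yields $H^1(S,D-2E)=H^1(S,D-E)=0$, the surjection $H^0(S,D-2E)\twoheadrightarrow H^0(E,\mathcal O_E(2))$ which is exactly condition~(a) of Theorem~\ref{thm:main1} for $\Delta=D-2E$, and an injection $H^1(S,D)\hookrightarrow H^1(E,\mathcal O_E(-2))=k$ with cokernel inside $H^2(S,D-E)\cong H^0(S,\mathcal O_S(E-D))^\vee$; the last group vanishes because $E$ is integral with $E^2<0$, so $|E|=\{E\}$, while $D\ne0$ and $D^2\ge0>E^2$, giving $h^1(S,D)=1$. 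In~(3), $h^0(S,kF)=k+1$ from the elliptic fibration together with $0\to\mathcal O_S((k-1)F)\to\mathcal O_S(kF)\to\mathcal O_F\to0$ and $h^1(\mathcal O_S)=0$ give $h^1(S,kF)=k-1$ by induction, and the same sequences give condition~(a) for $\Delta=(m-2)F$, using $m\ge2$. I would then apply Theorem~\ref{thm:main1} with pole $E$ (resp.\ $F$) and $m=1$: the injectivity hypothesis $H^1(S,\mathcal O_S(kE))\to H^1(S,\mathcal O_S((k+1)E))$ for $k\ge1$ is automatic, since $\mathcal O_E((k+1)E)$ has negative degree (in case~(3), use $\mathcal O_F((k+1)F)\cong\mathcal O_F$ and $h^0(S,jF)=j+1$); $H^1(S,N_{S/V})=0$ lets me prescribe the principal part $\beta|_E$ of $\beta$ as a section witnessing the non-surjectivity of $\pi_{E/S}(E)$ (resp.\ $\pi_{F/S}(F)$) and satisfying condition~(b); and $\beta|_C$, which is pole-free on $C$ by that choice of $\beta|_E$, then defines a section of $N_{S/V}|_C$ which --- after correcting by the restriction of a suitable deformation of $S$, as the hypothesis on $\tilde S$ allows --- lifts to a first order deformation $\alpha$ of $C$. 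Then Theorem~\ref{thm:main1} gives $\ob_S(\alpha)\ne0$, hence $\ob(\alpha)\ne0$, so $[C]$ is a singular point of $\Hilb^{sc} V$.

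I expect the crux to be this last step, namely the simultaneous choice of the principal part $\beta|_E$: it must lie in the subspace of sections with trivial polar part along $C\cap E$, so that $\beta|_C$ is genuinely pole-free and really does come from a section of $N_{C/V}$, while still satisfying $\partial_E(\beta|_E)\ne0$ and $\partial_E(\beta|_E)\cup\beta|_E\ne0$ in condition~(b). After Serre duality on $E\cong\mathbb P^1$ (resp.\ on the elliptic curve $F$) this becomes a concrete non-degeneracy statement for the quadratic map $\beta|_E\mapsto\partial_E(\beta|_E)\cup\beta|_E$ on that subspace, and it is precisely here that $E.D=-2$, $D^2\ge0$ and $H^1(S,D-3E)=0$ (resp.\ $D\sim mF$ with $m\ge2$) are used.
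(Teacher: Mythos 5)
Your overall strategy coincides with the paper's: part (1) rests on the surjectivity of $\rho\colon H^0(S,N_{S/V})\to H^0(C,N_{S/V}\big{\vert}_C)$ (forced by $H^1(S,-D)=0$) combined with the unobstructedness of the pair $(C,S)$ coming from the hypothesis on $\tilde S$ (your direct ``obstructions at every order'' phrasing is essentially the smoothness of $pr_1\colon \HF V\to\Hilb V$ at $(C,S)$ used in the paper), and parts (2),(3) use the restriction sequences to $E$ (resp.\ $F$) for the numerical claims and then Theorem~\ref{thm:main1} with $m=1$. Your cohomological computations are correct, including $H^2(S,D-E)=0$, which you get from $|E|=\{E\}$ where the paper invokes the signature theorem.

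There is, however, a genuine gap exactly at the point you flag as the ``crux,'' and it cannot be resolved by a clever choice of $\beta\big{\vert}_E$. Once $C$ is fixed, the principal part is forced: by Lemma~\ref{lem:restriction to C and E}(2) it lies in $H^0(E,N_{S/V}(E-C)\big{\vert}_E)$, and by the degree computation (b2) this sheaf is trivial on the irreducible curve $E$, so $\beta\big{\vert}_E$ is unique up to scalar and its divisor of zeros, viewed in $|N_{S/V}(E)\big{\vert}_E|$, is exactly $Z=C\cap E$. Condition (b) thus reduces to the statement $Z\notin\Lambda'$, which is a property of the given curve $C$, not something you get to arrange; non-surjectivity of $\pi_{E/S}(E)$ only says that $\Lambda'$ is a \emph{proper} subsystem, not that $Z$ avoids it. The paper closes this by proving $H^1(S,C-E)=0$ (using $D.E=-2$, $H^1(S,D-E)=0$, Lemma~\ref{lem:K3}(2) and the ampleness of $-K_V\big{\vert}_S$ to show $C-E$ is nef and big), so that the map $|C|\dashrightarrow|\mathcal O_E(C)|$, $C'\mapsto C'\cap E$, is dominant; one then replaces $C$ by a general member $C'$ of $|C|$ with $C'\cap E\notin\Lambda'$ and transfers obstructedness back to the original $C$ by upper semicontinuity. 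Both the vanishing $H^1(S,C-E)=0$ (resp.\ $H^1(S,C-F)=0$ in case (3), where $C-F=-K_V\big{\vert}_S+(m-1)F$ is ample) and the semicontinuity step are missing from your argument, and without them the claimed singularity of $\Hilb^{sc}V$ at the given point $[C]$ is not established.
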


Note that $H^1(S,D)\simeq H^1(S,N_{S/V}(-C))^{\vee}$
(cf.~\eqref{isom:cohomology of abnormality}).
If $H^1(S,D)=0$, then $C$ is unobstructed in $V$.
By using Theorem~\ref{thm:main1},
we partially prove that $C$ is obstructed in $V$ if $H^1(S,D)\ne 0$.
Under the assumption of Theorem~\ref{thm:main2}, 
the Hilbert-flag scheme $\HF V$ of $V$ (cf.~\S\ref{subsec:flag schemes})
is nonsingular at $(C,S)$
(cf.~Lemma~\ref{lem:flag of k3fano}).
Therefore $(C,S)$ belongs to a unique irreducible component
$\mathcal W_{C,S}$ of $\HF^{sc} V$.
The image $W_{C,S}$ of $\mathcal W_{C,S}$ in $\Hilb^{sc} V$ is called
the {\em $S$-maximal family} of curves containing $C$
(cf.~Definition~\ref{dfn:S-maximal}).
Then $W_{C,S}$ is of codimension $h^1(S,D)$
in the tangent space $H^0(C,N_{C/V})$ of $\Hilb^{sc} V$ at $[C]$
(cf.~\eqref{ses:flag to hilb}).

\begin{cor}
 \label{cor:main3}
 In (1),(2),(3) of Theorem~\ref{thm:main2},
 we have furthermore that
 \begin{enumerate}
  \renewcommand{\theenumi}{{\alph{enumi}}}
  \item If $h^1(S,D)\le 1$, then
	$W_{C,S}$ is an irreducible component of $(\Hilb^{sc} V)_{\red}$.
  \item $\Hilb^{sc} V$ is generically smooth along $W_{C,S}$
	if $h^1(S,D)=0$, and generically non-reduced along $W_{C,S}$
	if $h^1(S,D)=1$.
  \item If $H^0(S,-D)=0$, then
	$\dim_{[C]} \Hilb^{sc} V
	=(-K_V\big{\vert}_S)^2/2+g(C)+1$,
	where $g(C)$ is the genus of $C$.
 \end{enumerate}
\end{cor}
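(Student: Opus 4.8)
The plan is to derive all three statements from the single dimension count $\dim W_{C,S}=h^0(C,N_{C/V})-h^1(S,D)$, which comes from \eqref{ses:flag to hilb} and is available because $\HF V$ is smooth at $(C,S)$ (Lemma~\ref{lem:flag of k3fano}), together with the (non)singularity conclusions of Theorem~\ref{thm:main2} and the remark that $H^1(S,D)=0$ forces $C$ to be unobstructed. For (a): $W_{C,S}$ is irreducible with $[C]\in W_{C,S}\subseteq\Hilb^{sc}V$, so $h^0(C,N_{C/V})=\dim T_{[C]}\Hilb^{sc}V\ge\dim_{[C]}\Hilb^{sc}V\ge\dim W_{C,S}=h^0(C,N_{C/V})-h^1(S,D)$. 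If $h^1(S,D)=0$, Theorem~\ref{thm:main2}(1) makes $\Hilb^{sc}V$ smooth at $[C]$, so $\dim_{[C]}\Hilb^{sc}V=h^0(C,N_{C/V})=\dim W_{C,S}$ and the unique component through $[C]$, containing $W_{C,S}$ and of equal dimension, is $W_{C,S}$. If $h^1(S,D)=1$, Theorem~\ref{thm:main2}(2) or (3) makes $\Hilb^{sc}V$ singular at $[C]$, so $\dim_{[C]}\Hilb^{sc}V<h^0(C,N_{C/V})$, whence $\dim_{[C]}\Hilb^{sc}V=h^0(C,N_{C/V})-1=\dim W_{C,S}$; every component through $[C]$ then has dimension $\le\dim W_{C,S}$, and the one containing the irreducible set $W_{C,S}$ equals it.

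For (b) I would work at a general point $[C']$ of $W_{C,S}$, lift it to $(C',S')\in\mathcal W_{C,S}$, and re-apply Theorem~\ref{thm:main2} to $(C',S')$. Since $\HF V$ is smooth in a neighbourhood of $(C,S)$ it is smooth at $(C',S')$, so the hypothesis on $\tilde S'$ holds there; the numerical data are locally constant ($D'^2=D^2$, and the auxiliary $(-2)$-curve $E$, resp.\ elliptic curve $F$, is a fixed curve lying on every surface of the family, with $E.D'=-2$, resp.\ $D'\sim mF$), the vanishing $H^1(S',D'-3E)=0$ persists by semicontinuity, and one checks that non-surjectivity of the relevant $\pi$-map is preserved. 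Thus Theorem~\ref{thm:main2} applies at $[C']$ with the same conclusion as at $[C]$: if $h^1(S,D)=0$ then $\Hilb^{sc}V$ is smooth at a general point of $W_{C,S}$, i.e.\ generically smooth along it; if $h^1(S,D)=1$ then $\Hilb^{sc}V$ is singular at $[C']$, and since $W_{C,S}$ is a component by (a), a general $[C']$ lies on no other component, so $\dim_{[C']}\Hilb^{sc}V=\dim W_{C,S}<\dim T_{[C']}\Hilb^{sc}V$, while $W_{C,S}$ itself is smooth at $[C']$; hence $\Hilb^{sc}V$ is non-reduced at $[C']$.

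For (c), first $\dim_{[C]}\Hilb^{sc}V=\dim W_{C,S}=h^0(C,N_{C/V})-h^1(S,D)$ — by (a) when $h^1(S,D)\le1$, and in the remaining cases because $W_{C,S}$ is the $S$-maximal family and the first-order directions transverse to it are primarily obstructed. Next I compute the right-hand side. The hypothesis that some $\tilde S$ contains no $\tilde C$ forces the exterior map $H^0(C,N_{C/V})\to H^0(C,N_{S/V}|_C)$ to be non-surjective: otherwise every first-order deformation of $S$ would lift to $T_{(C,S)}\HF V$, contradicting that hypothesis. Hence, in the long exact sequence of $0\to N_{C/S}\to N_{C/V}\to N_{S/V}|_C\to0$, the connecting map $H^0(C,N_{S/V}|_C)\to H^1(C,N_{C/S})\simeq H^1(C,\omega_C)\simeq k$ is surjective, so $h^0(C,N_{C/V})=g(C)+h^0(C,N_{S/V}|_C)-1$. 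Using $N_{S/V}|_C\simeq(-K_V)|_C$, Riemann--Roch on $C$, the identification $H^1(C,N_{S/V}|_C)\simeq H^2(S,N_{S/V}(-C))\simeq H^0(S,D)^{\vee}$ (valid since $N_{S/V}\simeq(-K_V)|_S$ is ample on the $K3$ surface, so $H^1(S,N_{S/V})=H^2(S,N_{S/V})=0$), Serre duality on $S$, and Riemann--Roch $h^0(S,D)-h^1(S,D)=D^2/2+2$ together with $H^2(S,D)\simeq H^0(S,-D)^{\vee}=0$, and finally $C^2=2g(C)-2$ and $D=C+K_V|_S$, all intermediate terms cancel and one gets $h^0(C,N_{C/V})-h^1(S,D)=(-K_V|_S)^2/2+g(C)+1$.

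The steps I expect to cost the most work are in (b) — verifying that the auxiliary curve and, above all, the non-surjectivity of the $\pi$-map really do survive to a general member $(C',S')$ of $\mathcal W_{C,S}$ rather than only holding at $(C,S)$ — and the input used in (c) for $h^1(S,D)\ge2$, namely that the deficiency of $\Hilb^{sc}V$ at $[C]$ is exactly $h^1(S,D)$; everything else is bookkeeping with the normal-bundle sequences and Riemann--Roch.
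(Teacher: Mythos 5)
Your proposal is correct and follows essentially the same route as the paper: the exact sequence \eqref{ses:flag to hilb} from the smooth Hilbert--flag scheme gives $\dim W_{C,S}=h^0(C,N_{C/V})-h^1(S,D)$, which combined with the (un)obstructedness of Theorem~\ref{thm:main2} pins down $\dim_{[C]}\Hilb^{sc}V$ and yields (a), (b) is obtained by running the theorem at the generic member of $W_{C,S}$ (the paper compresses this into the single remark that $C$ is a generic member), and your Riemann--Roch chain for (c) is just a longer derivation of the paper's $\chi(C,N_{C/S})+\chi(S,N_{S/V})=(-K_V\big{\vert}_S)^2/2+g(C)+1$ from Lemmas~\ref{lem:flag to hilb}(2)(3) and \ref{lem:flag of k3fano}. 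One correction to your sketch of (b): the auxiliary $(-2)$-curve $E$ (resp.\ elliptic curve $F$) is \emph{not} a fixed curve lying on every surface of the family --- as $S$ moves in $\mathcal W_{C,S}$ the curve moves with it; what persists is its class in the (generically constant) Picard lattice, from which an effective $(-2)$- (resp.\ elliptic) representative on the general $S'$ must be re-produced before the $\pi$-map condition can be checked there. Also note that for $h^1(S,D)=0$ your dimension formula for $W_{C,S}$ tacitly uses $H^0(S,-D)=0$; the paper's argument via smoothness and flatness of $pr_1$ (Lemma~\ref{lem:flag to hilb}(1)) avoids this.
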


The following is a simplification or a variation of Mumford's example.
(See Examples \ref{ex:non-reduced V_4} and \ref{ex:non-reduced P^3}
  for more examples.)

\begin{ex}[$\car k=0$]
  \label{ex:non-reduced}
  In the following examples,
  the closure $\overline W$ of $W$ is an irreducible component of 
  $(\Hilb^{sc} V)_{\red}$ and 
  $\Hilb^{sc} V$ is generically non-reduced along $W$.
  We have $h^0(C,N_{C/V})=\dim W+1$ at the generic point $[C]$ of $W$.
  \begin{enumerate}
    \item Let $V$ be a smooth quartic $3$-fold $V_4 \subset \mathbb P^4$,
    $E$ a smooth conic on $V$ 
    with trivial normal bundle $N_{E/V} \simeq \mathcal O_E^{2}$,
    $S$ a smooth hyperplane section of $V$ containing $E$ and
    such that $\Pic S=\mathbb Z\mathbf h\oplus \mathbb Z E$,
    where $\mathbf h\sim \mathcal O_S(1)$.
    Then a general member $C$ of the complete linear system 
    $|2\mathbf h+2E|$ on $S$ is
    a smooth connected curve of degree $12$ and genus $13$.
    Such curves $C$ are parametrised by a locally closed
    irreducible subset $W$ of $\Hilb^{sc} V$ of dimension $16$.
    \item Let $V=\mathbb P^3$ 
    and let $F$ be a smooth plane cubic (elliptic) curve,
    $S$ a smooth quartic surface containing $F$.
    Then a general member $C$ of 
    $|4\mathbf h+2F|$ ($\mathbf h \sim \mathcal O_S(1)$)
    is a smooth connected curve
    of degree $22$ and genus $57$ on $S$.
    Such curves $C$ are parametrised by a locally closed irreducible
    subset $W$ of $\Hilb^{sc} \mathbb P^3$ of dimension $90$.
  \end{enumerate}
\end{ex}

The organization of this paper is as follows.
The proof of Theorem~\ref{thm:main1} heavily depends on the 
analysis of the singularity of {\em polar $d$-maps}.
Given a projective scheme $V$ and its hypersurface $S \subset V$,
there exists a so-called ``Hilbert-Picard'' morphism
$\psi_{S}: \Hilb^{cd} V \rightarrow \Pic S$
from the Hilbert scheme of effective Cartier divisors on $V$
to the Picard scheme of $S$, 
sending a hypersurface $S' \subset V$ to 
the invertible sheaf $\mathcal O_V(S')\big{\vert}_S$ on $S$
(cf.~\S\ref{subsec:hypersurface}).
The tangent map $d_S: H^0(S,N_{S/V}) \rightarrow H^1(S,\mathcal O_S)$ 
of $\psi_S$ at $[S]$ is called the {\em $d$-map} for $S\subset V$.
In \S\ref{subsec:with pole}, we show that this map is extended
into a version
$d_{S^\circ}: H^0(S,N_{S/V}(mE)) \rightarrow H^1(S,\mathcal O_S((m+1)E))$
with a pole along a divisor $E\ge 0$ on $S$.
We also prove that for any $\beta \in H^0(S,N_{S/V}(mE))$
the restriction $d_{S^\circ}(\beta)\big{\vert}_E$ to $E$
of the image $d_{S^\circ}(\beta)$ 
coincides with the coboundary image
$\partial_E(\beta\big{\vert}_E)$ of \eqref{ses:normal bundle of E}
up to constant
(cf.~Proposition~\ref{prop:key}).
In \S\ref{sect:obstruction},
applying this result to a $3$-fold $V$, 
we prove Theorem~\ref{thm:main1}.
In \S~\ref{sect:k3}, we prove
Theorem~\ref{thm:main2} and Corollary~\ref{cor:main3}
as an application of Theorem~\ref{thm:main1}.
In \S\ref{subsec:mori cone of quartics}, 
we study the Mori cone $\mori(S_4)$ of
a smooth quartic surface $S_4$ of Picard number $2$
(cf.~Lemmas~\ref{lem:mori cone1}, \ref{lem:mori cone2}).
Applying the results on Mori cones, in \S\ref{subsec:hilb},
for curves $C$ lying on $S_4$,
we study the deformations of $C$ in $\mathbb P^3$
and $C$ in a smooth quartic $3$-fold $V_4 \subset \mathbb P^4$.
In particular,
we give a sufficient condition for $W_{C,{S_4}}$ to be
a generically non-reduced (or generically smooth) component
of $\Hilb^{sc} \mathbb P^3$ and $\Hilb^{sc} V_4$
(cf.~Theorems~\ref{thm:Hilb with rational curve},
\ref{thm:Hilb with elliptic curve} and
\ref{thm:Hilb without rational nor elliptic curve}).

\section{Preliminaries}
\label{sect:preliminarlies}

We start by recalling some basic facts on the deformation theory of 
closed subschemes, as well as setting up Notations. 
We work over an algebraically closed field 
$k$ of characteristic $p \ge 0$. 
Let $V\subset \mathbb P^n$ be a closed subscheme of $\mathbb P^n$ 
with the embedding invertible sheaf $\mathcal O_V(1)$ on $V$,
and $X \subset V$ a closed subscheme of $V$ with 
the Hilbert polynomial $P_X=\chi(\mathcal O_X(n))$.
Then as is well known,
the {\em Hilbert scheme} $\Hilb_{P} V$ of $V$ 
parametrises all closed subscheme
$X'$ of $V$ with $P_{X'}=P_X$ (cf.~\cite{Grothendieck}).
We denote by $\Hilb V$ the (full) Hilbert scheme 
$\bigsqcup_{P} \Hilb_P V$ of $V$.
Let $\mathcal I_X$ and 
$N_{X/V}=(\mathcal I_X/\mathcal I_X^2)^{\vee}$
denote the ideal sheaf and the normal sheaf of $X$ in $V$, respectively.
The symbol $[X]$ represents the point of $\Hilb V$ corresponding to $X$.
Then the tangent space of $\Hilb V$ at $[X]$ is 
known to be isomorphic to the group
$\Hom(\mathcal I_X,\mathcal O_X)$ of sheaf homomorphisms from
$\mathcal I_X$ to $\mathcal O_X$,
which is isomorphic to $H^0(X,N_{X/V})$.
Every obstruction to deforming $X$ in $V$ is contained in
the group $\Ext^1_V(\mathcal I_X,\mathcal O_X)$, 
and if $X$ is a locally complete intersection in $V$, 
then it is contained in a smaller subgroup
$H^1(X,N_{X/V}) \subset \Ext^1_V(\mathcal I_X,\mathcal O_X)$.
If $H^1(X,N_{X/V})=0$, 
then $\Hilb V$ is nonsingular at $[X]$ of dimension $h^0(X,N_{X/V})$.
A {\em first order (infinitesimal) deformation} of $X$ in $V$ 
is a closed subscheme $X' \subset X \times \Spec D$, 
flat over the ring $D=k[t]/(t^2)$ of dual numbers, with a
central fiber $X'_0=X$.
By the universal property of the Hilbert scheme,
there exists a one-to-one correspondence between
the set of $D$-valued points $\gamma: \Spec D \rightarrow \Hilb V$
sending $0$ to $[X]$, and the set of 
the first order deformations of $X$ in $V$.
By the infinitesimal lifting property of smoothness
(cf.~\cite[Proposition~4.4, Chap.~1]{Hartshorne10}),
if there exists a first order deformation of $X$ in $V$
not liftable to a deformation over $\Spec k[t]/(t^n)$ for some $n \ge 3$,
then $\Hilb V$ is singular at $[X]$.
We say $X$ is {\em unobstructed} (resp. {\em obstructed}) in $V$
if $\Hilb V$ is nonsingular (resp. singular) at $[X]$,
and for an irreducible closed subset $W$ of $\Hilb V$,
we say $\Hilb V$ is {\em generically smooth}
(resp. {\em generically non-reduced}) along $W$
if $\Hilb V$ is nonsingular (resp. singular) at the generic point 
$X_{\eta}$ of $W$.

\subsection{Primary obstructions}
\label{subsec:primary}

Let $V$ be a (projective) scheme over $k$, 
$X$ a closed subscheme of $V$,
$\alpha$ a global section of $N_{X/V}$.
We define a cup product 
$\ob(\alpha) \in \Ext^1_V(\mathcal I_X,\mathcal O_X)$ by
$$
\ob(\alpha):= \alpha \cup \mathbf e \cup \alpha,
$$
where $\mathbf e \in \Ext^1_V(\mathcal O_X,\mathcal I_X)$ 
is the extension class of the 
standard short exact sequence
\begin{equation}
  \label{ses:standard}
  0 \longrightarrow \mathcal I_X \longrightarrow \mathcal O_V
  \longrightarrow \mathcal O_X \longrightarrow 0
\end{equation}
on $V$.
Though the following fact is well-known to the experts,
we give a proof for the reader's convenience.
\begin{thm}[{cf.~\cite{Curtin,Kollar}}]
  \label{thm:original cup product}
  Let $\tilde X$ be a first order deformation of $X$ in $V$
  corresponding to $\alpha$. 
  If $X$ is a locally complete intersection in $V$,
  then $\tilde X$ lifts to a deformation ${\tilde {\tilde X}}$
  over $k[t]/(t^3)$,
  if and only if $\ob(\alpha)$ is zero.
\end{thm}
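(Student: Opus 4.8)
The plan is to identify the obstruction to extending a deformation over $k[t]/(t^2)$ to one over $k[t]/(t^3)$ with the cup product $\ob(\alpha) = \alpha \cup \mathbf e \cup \alpha$ living in $\Ext^1(\mathcal I_X, \mathcal O_X)$, and then use the locally complete intersection hypothesis to locate it in $H^1(X, N_{X/V})$. First I would set up the standard deformation-theoretic framework. Given the first order deformation $\tilde X \subset V \times \Spec D$ with $D = k[t]/(t^2)$, corresponding to $\alpha \in H^0(X, N_{X/V}) = \Hom(\mathcal I_X, \mathcal O_X)$, the question of lifting to $\tilde{\tilde X}$ over $k[t]/(t^3)$ is local-to-global: locally a lift always exists (since $X$ is lci, hence locally unobstructed), and the obstruction to patching the local lifts is a class in $\Ext^1(\mathcal I_X, \mathcal O_X)$, refined to $H^1(X, N_{X/V})$ when $X$ is lci in $V$ because then $\mathcal I_X/\mathcal I_X^2$ is locally free and the local $\Ext$-sheaves $\mathcal Ext^i(\mathcal I_X, \mathcal O_X)$ vanish for $i \ge 1$, so the local-to-global spectral sequence degenerates to give $\Ext^1(\mathcal I_X, \mathcal O_X) \cong H^1(X, \mathcal Hom(\mathcal I_X, \mathcal O_X)) = H^1(X, N_{X/V})$.

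The heart of the argument is to show this obstruction class equals $\alpha \cup \mathbf e \cup \alpha$. I would do this by working on affine charts $U_i = \Spec A_i$ of $V$ on which $X$ is cut out by a regular sequence $f_1^{(i)}, \dots, f_c^{(i)}$. A first order deformation is given on $U_i$ by perturbed equations $f_j^{(i)} + t g_j^{(i)}$, where the classes $g_j^{(i)} \bmod \mathcal I_X$ assemble into $\alpha$. A second order lift would be $f_j^{(i)} + t g_j^{(i)} + t^2 h_j^{(i)}$; flatness over $k[t]/(t^3)$ imposes that the ideal generated by these in $A_i[t]/(t^3)$ is again generated by a regular-sequence-type lift, which (after a Koszul-relations computation) forces a condition whose local solvability is automatic but whose global compatibility is measured by a Čech $1$-cocycle. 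Tracing through how the transition data $f_j^{(i)} = \sum_k u_{jk}^{(i\ell)} f_k^{(\ell)}$ on overlaps $U_i \cap U_\ell$ interact with the perturbations, one extracts precisely the composite $\mathcal I_X \xrightarrow{\alpha} \mathcal O_X$, the extension class $\mathbf e$ of \eqref{ses:standard}, and $\alpha$ again — i.e., the Yoneda composition $\alpha \cup \mathbf e \cup \alpha$. The equivalence "$\tilde X$ lifts $\iff \ob(\alpha) = 0$" then follows because a Čech cocycle is a coboundary exactly when the local lifts can be adjusted to agree on overlaps.

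The main obstacle I anticipate is the bookkeeping in the cocycle computation: one must carefully track how the quadratic term $t^2$ enters flatness (via the second Koszul syzygy / the failure of the perturbed $f_j + tg_j$ to remain a regular sequence to higher order), and then match this, term by term, against the Yoneda splice of $\alpha$, $\mathbf e$, and $\alpha$ — being scrupulous about signs and about the identification $\Ext^1(\mathcal I_X, \mathcal O_X) \cong H^1(X, N_{X/V})$. An alternative, perhaps cleaner, route would be to invoke the general principle that for a closed immersion the primary obstruction to lifting a deformation is the self-cup-product of the Kodaira–Spencer class against the Atiyah-type extension class of \eqref{ses:standard}; citing \cite{Curtin,Kollar} this is essentially the statement, so the proof here can be presented as spelling out that principle in the lci case where everything lands in $H^1(X, N_{X/V})$. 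I would adopt the explicit Čech approach since it also fixes the normalization of the cup product that is used in the sequel (e.g.\ in Theorem~\ref{thm:main1}).
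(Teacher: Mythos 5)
Your plan follows essentially the same route as the paper: choose an affine cover on which $X$ is cut out by local regular sequences, write the first and second order deformations via perturbed equations $f+tu$ and $f+tu+t^2v$, observe that the obstruction to gluing the local second-order lifts is a \v Cech $1$-cocycle, and identify that cocycle with $\alpha\cup\mathbf e\cup\alpha$ by chasing the coboundary of the standard sequence \eqref{ses:standard} through the transition matrices. The only work you defer --- matching the cocycle $\{B_{ij}\mathbf u_i \bmod \mathcal I_X\}$ term by term with the Yoneda splice --- is exactly the computation the paper carries out in its Claim, so the proposal is correct and matches the paper's proof in structure and substance.
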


\begin{proof}
First we fix some notations for the proof.
Let $\mathfrak U:=\left\{U_i \bigm| i \in I\right\}$ 
be an open affine covering of $V$,
$R_i$ the coordinate ring of $U_i$,
$I_i$ the defining ideal of $X\cap U_i$ in $U_i$.
We take a covering $\mathfrak U$ such that for all $i,j$,
(i) the intersections $U_{ij}:=U_i \cap U_j$ are affine, and
(ii) $I_i$ are generated by $m$ elements $f_{i1},\dots,f_{im}$ in $R_i$,
where $m$ denotes the codimension of $X$ in $V$.
(Such covering exists by assumption.)
Let $R_{ij}$ be the coordinate ring of $U_{ij}$.
Then since $I_i$ and $I_j$ agree on the overlap $U_{ij}$,
there exists a $m\times m$ matrix $A_{ij}$
with entries in $R_{ij}$ 
(i.e., $A_{ij} \in M(m,R_{ij})$) such that
\begin{equation}
  \label{eqn:transition1}
  \mathbf f_j
  =A_{ij} \mathbf f_i, \quad 
  \mbox{where $\mathbf f_i:=
    \begin{pmatrix}
      f_{i1} \\ \vdots \\ f_{im}
    \end{pmatrix}
    $}.
\end{equation}
Here and later, for a ring $R$,
we denote by $M(m,R)$ the set of $m\times m$ matrices with entries in $R$.
For an object $o$ in $V$,
we denote by $\overline o$ the restriction of $o$ to $X$.
For example, if $u$ is a section of a sheaf $\mathcal F$ on $V$,
$\overline u$ denotes the image of $u$ in 
$\mathcal F\big{\vert}_X=\mathcal F\otimes_{\mathcal O_V} \mathcal O_X$.
We note that the restriction $\overline{A_{ij}}$ to $X$ of $A_{ij}$
represents the transition matrix of $N_{X/V}$ over $U_{ij}$.

Secondly we recall the correspondence between $\tilde X$ and $\alpha$.
Since $X$ is a locally complete intersection in $V$,
so is $\tilde X$ in $V \times \Spec k[t]/(t^2)$ (cf.~\cite[\S9]{Hartshorne10}).
Then for each $i$, the defining ideal $J_i$ of $\tilde X$
over $U_i\times \Spec k[t]/(t^2)$ is generated by
$$
f_{i1}+tu_{i1},\quad \dots, \quad f_{im}+tu_{im}
$$
in $R_i[t]/(t^2)$ for some $u_{ik} \in R_i$ ($k=1,\dots,m$).
Since $J_i$ and $J_j$ agree on $U_{ij}\times \Spec k[t]/(t^2)$,
there exists a matrix $B_{ij} \in M(m,R_{ij})$
such that
$$
\mathbf f_j + t\mathbf u_j
=(A_{ij}+tB_{ij})(\mathbf f_i + t\mathbf u_i),
\qquad
\mbox{where $\mathbf u_i:=
  \begin{pmatrix}
    u_{i1} \\ \vdots \\ u_{im}
  \end{pmatrix}
  $}.
$$
Comparing the coefficient of $t$, we have 
\begin{equation}
  \label{eqn:transition2}
  \mathbf u_j=A_{ij}\mathbf u_i+B_{ij}\mathbf f_i,
\end{equation}
which implies that
$\overline{\mathbf u_j}=\overline{A_{ij}\mathbf u_i}$
in $\mathcal O_{X_{ij}}^{\oplus m}$.
Let $\alpha_i$ be the section of
$N_{X/V}\simeq \mathcal Hom(\mathcal I_X,\mathcal O_X)$ over $U_i$
sending each $f_{ik} \in I_i$ 
to $\overline{u_{ik}}\in R_i/I_i$ ($k=1,\dots,m$),
respectively.
Then by \eqref{eqn:transition1} and \eqref{eqn:transition2},
the local sections $\alpha_i$ ($i \in I$) over $U_i$
agree on $U_{ij}$ for every $i,j$
and define a global section of $N_{X/V}$,
which is nothing but $\alpha$.
In the rest of the proof, for convenience,
we write as $\alpha_i(\mathbf f_i)=\overline{\mathbf u_i}$
instead of writing $\alpha_i(f_{ik})=\overline{u_{ik}}$ ($k=1,\dots,m$).

Now we consider liftings of $\tilde X$
to a second order deformation $\tilde{\tilde X}$ of $X$ in $V$ 
(over $k[t]/(t^3)$). If there exists such a ${\tilde{\tilde X}}$,
then its defining ideal $K_i$ 
over $U_i\times \Spec k[t]/(t^3)$ is generated by
$$
f_{i1}+tu_{i1}+t^2v_{i1},\quad \dots, \quad f_{im}+tu_{im}+t^2v_{im}
$$
in $R_i[t]/(t^3)$ for some $v_{ik} \in R_i$ ($k=1,\dots,m$).
Then there exists a matrix $C_{ij} \in M(m,R_{ij})$
such that
$$
\mathbf f_j + t\mathbf u_j +t^2\mathbf v_j
=(A_{ij}+tB_{ij}+t^2C_{ij})(\mathbf f_i + t\mathbf u_i +t^2\mathbf v_i),
\qquad
\mbox{where $\mathbf v_i:=
  \begin{pmatrix}
    v_{i1} \\ \vdots \\ v_{im}
  \end{pmatrix}
  $},
$$
which is equivalent to that
\begin{equation}
  \label{eqn:cohomologous to 0}
  \overline{\mathbf v_j-A_{ij}\mathbf v_i}
  =\overline{B_{ij}\mathbf u_i}
\end{equation}
in $\mathcal O_{X_{ij}}^{\oplus m}$ 
by comparison of the coefficient of $t^2$.
We see that 
$\tilde {\tilde X}$ is defined as a subscheme of $V \times k[t]/(t^3)$, 
flat over $k[t]/(t^3)$ if and only if
we can solve the equation \eqref{eqn:cohomologous to 0} for $\mathbf v_i$.
On the other hand,
let us define a $1$-cochain
$\beta:=\left\{\beta_{ij}\right\} \in C^1(\mathfrak U,N_{X/V})$,
where $\beta_{ij}$ is the section of 
$N_{X/V}\simeq \mathcal Hom(\mathcal I_X,\mathcal O_X)$
over $U_{ij}$ with
\begin{equation}
  \label{eqn:gamma}
  \beta_{ij}(\mathbf f_i)
  =\overline{B_{ij}\mathbf u_i}.
\end{equation}
Then \eqref{eqn:cohomologous to 0}
implies that $\beta$ is cohomologous to zero,
since $\overline{A_{ij}}$ is the transition matrix of $N_{X/V}$
over $U_{ij}$.
In fact, if we have \eqref{eqn:cohomologous to 0},
then $\beta$ is equal to the coboundary 
of the $0$-cochain $\alpha'=\left\{\alpha'_i\right\}
\in C^0(\mathfrak U,N_{X/V})$
defined by $\alpha'_i(\mathbf f_i)=\overline{\mathbf v_i}$.
Thus for the proof, it suffices to prove the next claim.
\begin{claim}
  The cohomology class in $H^1(X,N_{X/V})$
  represented by $\beta$ equals
  $\ob(\alpha)$.
\end{claim}

\paragraph{\bf Proof of Claim.}\quad
The functor $\Hom(\mathcal I_X,*)$
induces a coboundary map  $\delta: \Hom(\mathcal I_X,\mathcal O_X) \rightarrow
\Ext^1_V(\mathcal I_X,\mathcal I_X)$.
We also deduce from \eqref{ses:standard}
an exact sequence of {\v C}ech complexes
$$
0 \longrightarrow
C^\bullet(\mathfrak U,\mathcal Hom(\mathcal I_X,\mathcal I_X)) \longrightarrow
C^\bullet(\mathfrak U,\mathcal Hom(\mathcal I_X,\mathcal O_V)) \longrightarrow
C^\bullet(\mathfrak U,\mathcal Hom(\mathcal I_X,\mathcal O_X)) \longrightarrow
0.
$$
We compute the image $\delta(\alpha) (=\alpha \cup \mathbf e)$
of $\alpha$ by a diagram chase.
Let $\alpha_i:=\alpha\big{\vert}_{U_i}$ for $i\in I$.
Then as we see before, we have
$\alpha_i(\mathbf f_i)=\overline{\mathbf u_i}$.
If we define a section $\hat \alpha_i$ of 
$\mathcal Hom(\mathcal I_X,\mathcal O_V)$ over $U_i$
by $\hat \alpha_i(\mathbf f_i)=\mathbf u_i$,
then $\hat \alpha_i$ is a local lift of $\alpha$ over $U_i$.
Since $\alpha$ is globally defined, 
$\delta(\alpha)_{ij}=\tilde \alpha_j - \tilde \alpha_i$ 
becomes a section of $\mathcal Hom(\mathcal I_X,\mathcal I_X)$
over $U_{ij}$ for every $i,j$.
Then by \eqref{eqn:transition2},
we have $\delta(\alpha)_{ij}(\mathbf f_i)=B_{ij}\mathbf f_i$.
Thus we have computed $\delta(\alpha)$ as an element of
$H^1(V,\mathcal Hom(\mathcal I_X,\mathcal I_X))
\subset \Ext^1_V(\mathcal I_X,\mathcal I_X)$.
Since 
$\ob(\alpha)=\alpha \cup \mathbf e \cup \alpha=\delta(\alpha)\cup \alpha$,
$\ob(\alpha)$ is represented by the $1$-cocycle
$\left\{\alpha_i \circ\delta(\alpha)_{ij}\right\}$ of $N_{X/V}$.
Therefore $\ob(\alpha)$
is contained in $H^1(X,N_{X/V}) \subset \Ext^1_V(\mathcal I_X,\mathcal O_X)$.
Since we have
$$
\alpha_i \circ\delta(\alpha)_{ij}(\mathbf f_i)
=\alpha_i (B_{ij} \mathbf f_i)
=B_{ij} \alpha_i (\mathbf f_i)
=\overline{B_{ij} \mathbf u_i},
$$
we conclude that
$\alpha_i \circ\delta(\alpha)_{ij}=\beta_{ij}$ 
by \eqref{eqn:gamma}.
Thus we have proved the claim and have finished the proof of 
Theorem~\ref{thm:original cup product}.
\end{proof}

\begin{dfn}
  Here $\ob(\alpha)$ is 
  called the {\em (primary) obstruction} for $\alpha$ 
  (or $\tilde X$). 
\end{dfn}

\subsection{Hypersurface case and $d$-map}
\label{subsec:hypersurface}

Let $X$ be an effective Cartier divisor on $V$,
i.e., a closed subscheme of $V$
whose ideal sheaf is locally generated by a single equation.
We denote by $\Hilb^{cd} V$ the Hilbert scheme of
effective Cartier divisors on $V$.
There exists a natural morphism 
$\varphi: \Hilb^{cd} V \rightarrow \Pic V$
to the Picard scheme $\Pic V$ of $V$,
sending a divisor $D$ on $V$ to the invertible sheaf $\mathcal O_V(D)$
associated to $D$.
We define a morphism
$\psi_X: \Hilb^{cd} V \rightarrow \Pic X$ by the composition
of $\varphi$ with the morphism
$\Pic V \overset{{\vert}_X}\longrightarrow \Pic X$
defined by the restriction to $X$.
By definition, the tangent map $d_X$ of $\psi_X$ at $[X]$
is the composite
\begin{equation}
  \label{map:divisorial d-map}
  d_X: H^0(X,N_{X/V}) \overset{\delta}\longrightarrow H^1(V,\mathcal O_V)
  \overset{|_X}\longrightarrow H^1(X,\mathcal O_X),
\end{equation}
where $\delta$ is the coboundary map of the exact sequence
$0 \rightarrow \mathcal O_V \rightarrow
\mathcal O_V(X) \rightarrow N_{X/V}\rightarrow 0$.
We call $d_X$ the {\em $d$-map} for $X$.
Let $\tilde X$ be a first order deformation of $X$ in $V$,
corresponding to a global section $\beta$ of $N_{X/V}$.
Then by \cite[Lemma 2.9]{Nasu4},
the primary obstruction $\ob(\beta)$ of $\tilde X$
equals the cup product $d_X(\beta)\cup \beta$
by the map
$
H^1(X,\mathcal O_X)\times H^0(X,N_{X/V}) \overset{\cup}{\rightarrow}
H^1(X,N_{X/V}).
$

\subsection{Exterior components}
\label{subsec:exterior}

We recall the definition of the exterior components
(cf.~\cite{Mukai-Nasu,Nasu4}), which is useful for 
computing the obstructions to deforming 
subschemes of codimension greater than $1$.
Let $X$ and $Y$ be two closed subschemes of $V$ such that $X \subset Y$,
$\pi_{X/Y}: N_{X/V} \rightarrow N_{Y/V}\big{\vert}_X$
the natural projection. Then we have the induced maps
$H^i(\pi_{X/Y}): H^i(X,N_{X/V}) \rightarrow H^i(X,N_{Y/V}\big{\vert}_X)$
on the cohomology groups for $i=0,1$. The two images
$$
\pi_{X/Y}(\alpha):=H^0(\pi_{X/Y})(\alpha) 
\qquad
\mbox{and}
\qquad
\ob_Y(\alpha):=H^1(\pi_{X/Y})(\ob(\alpha)) 
$$
are called the {\em exterior component} 
of $\alpha$ and $\ob(\alpha)$, respectively.
These objects respectively correspond to the deformation of $X$ in $V$ into the normal direction to $Y$ and its obstruction.
Now we assume that
$Y$ is an effective Cartier divisor on $V$, and
$X$ is a locally complete intersection in $Y$.
The $d$-map $d_X$ in \eqref{map:divisorial d-map} is generalized
and defined for a pair $(X,Y)$. (In fact, we have $d_Y=d_{Y,Y}$.)

\begin{dfn}
  \label{dfn:d-map}
  Let $\delta$ be the coboundary map of
  $[0 \rightarrow \mathcal I_X \rightarrow
    \mathcal O_V \rightarrow \mathcal O_X \rightarrow 0]\otimes 
  \mathcal O_V(Y)$.
  The composition
  $$
  d_{X,Y}:
  H^0(X,N_{Y/V}\big{\vert}_X) \overset{\delta} \longrightarrow
  H^1(V,\mathcal I_X \otimes \mathcal O_V(Y)) \overset{|_X}\longrightarrow
  H^1(X,{N_{X/V}}^{\vee} \otimes N_{Y/V}\big{\vert}_X)
  $$
  of $\delta$ with the restriction map $|_X$ to $X$
   is called the {\em $d$-map} for $(X,Y)$. 
\end{dfn}

Then the two $d$-maps $d_{X,Y}$ and $d_Y$ are related 
by the following commutative diagram:
\begin{equation}\label{diag:d-map}
  \begin{array}{ccccc}
    H^0(Y,N_{Y/V}) & \mapright{d_Y} & 
     H^1(Y,\mathcal O_Y) \\
    & & \mapdown{|_X}\\
    \mapdown{|_X} && H^1(X,\mathcal O_X) \\
    && \mapdown{H^1(\iota)} \\
    H^0(X,N_{Y/V}\big{\vert}_X) & \mapright{d_{X,Y}} & 
     H^1(X,{N_{X/V}}^{\vee}\otimes N_{Y/V}\big{\vert}_X),
  \end{array}
\end{equation}
where $\iota: \mathcal O_X \rightarrow {N_{X/V}}^{\vee}\otimes N_{Y/V}\big{\vert}_X$
is the sheaf homomorphism induced by ${\pi_{X/Y}}$.  

\begin{lem}[{cf.~\cite[Lemma~2.3 and 2.4]{Mukai-Nasu}}]
  \label{lem:exterior}
  Let $\tilde X$ and $\tilde Y$ 
  be first order deformations of $X$ and $Y$ in $V$, 
  with the corresponding global sections $\alpha$ and $\beta$ 
  of $N_{X/V}$ and $N_{Y/V}$, respectively.
  If we have 
  $\pi_{X/Y}(\alpha) = \beta\big{\vert}_X$
  in $H^0(X,N_{Y/V}\big{\vert}_X)$,
  then we have
  $$
  \ob_Y(\alpha) = d_{X,Y}(\pi_{X/Y}(\alpha))\cup_1 \alpha\\
  = d_Y(\beta)\big{\vert}_X \cup_2 \pi_{X/Y}(\alpha)
  $$
  in $H^1(X,N_{Y/V}\big{\vert}_X)$,
  where $\cup_1$ and $\cup_2$ are the cup product maps
  \begin{align*}
    H^1(X,{N_{X/V}}^{\vee}\otimes N_{Y/V}\big{\vert}_X) \times H^0(X,N_{X/V})
    \overset{\cup_1}{\longrightarrow} H^1(X,N_{Y/V}\big{\vert}_X), \\
    H^1(X,\mathcal O_X)\times H^0(X,N_{Y/V}\big{\vert}_X) 
    \overset{\cup_2}{\longrightarrow}
    H^1(X,N_{Y/V}\big{\vert}_X)    
  \end{align*}
  respectively. 
\end{lem}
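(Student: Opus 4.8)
The plan is to describe every map occurring in the statement via Yoneda composition in the derived category of $V$, which collapses both equalities to a one-line computation; alternatively one may extend the {\v C}ech computation from the proof of Theorem~\ref{thm:original cup product}, carrying the projection $\pi_{X/Y}$ along, at the price of heavier bookkeeping with the twists by $\mathcal O_V(Y)$ and the syzygy relations. (The statement is essentially \cite[Lemma~2.3 and 2.4]{Mukai-Nasu}.)

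First I would record three compatibilities. Write $j\colon\mathcal I_Y\hookrightarrow\mathcal I_X$ for the inclusion of ideal sheaves (valid since $X\subset Y$), and recall from the proof of Theorem~\ref{thm:original cup product} the extension class $\mathbf e\in\Ext^1(\mathcal O_X,\mathcal I_X)$, the value $\delta(\alpha)=\alpha\cup\mathbf e\in\Ext^1(\mathcal I_X,\mathcal I_X)$ of the connecting map, and $\ob(\alpha)=\delta(\alpha)\cup\alpha\in\Ext^1(\mathcal I_X,\mathcal O_X)$.
(a) Under the identifications $H^0(X,N_{X/V})=\Hom(\mathcal I_X,\mathcal O_X)$, $H^0(X,N_{Y/V}\big|_X)=\Hom(\mathcal I_Y,\mathcal O_X)$ and $H^1(X,N_{Y/V}\big|_X)=\Ext^1(\mathcal I_Y,\mathcal O_X)$ (the last two because $\mathcal I_Y$ is invertible), the projection $\pi_{X/Y}$ is precomposition with $j$; hence so is the induced map $\Ext^1(\mathcal I_X,\mathcal O_X)\to\Ext^1(\mathcal I_Y,\mathcal O_X)$, and therefore $\ob_Y(\alpha)=\ob(\alpha)\circ j$.
(b) Since $\mathcal O_V(Y)$ is invertible, the extension class of $[0\to\mathcal I_X\to\mathcal O_V\to\mathcal O_X\to0]\otimes\mathcal O_V(Y)$ is identified with $\mathbf e$; chasing the connecting map of Definition~\ref{dfn:d-map} and using $\pi_{X/Y}(\alpha)=\alpha\circ j$, one finds that $d_{X,Y}(\pi_{X/Y}(\alpha))$ is the image of $\delta(\alpha)\circ j\in\Ext^1(\mathcal I_Y,\mathcal I_X)$ in $H^1(X,{N_{X/V}}^{\vee}\otimes N_{Y/V}\big|_X)$.
(c) The pairing $\cup_1$ is the evaluation pairing, compatible with Yoneda composition: for $\phi\in\Ext^1(\mathcal I_Y,\mathcal I_X)$ and $\alpha\in\Hom(\mathcal I_X,\mathcal O_X)$ one has $\phi\cup_1\alpha=\alpha\circ\phi\in\Ext^1(\mathcal I_Y,\mathcal O_X)$.
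Granting (a)--(c), the first equality follows at once:
$$
d_{X,Y}(\pi_{X/Y}(\alpha))\cup_1\alpha=\alpha\circ\delta(\alpha)\circ j=\ob(\alpha)\circ j=\ob_Y(\alpha).
$$

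For the second equality I would use the commutative diagram~\eqref{diag:d-map}. Since $\pi_{X/Y}(\alpha)=\beta\big|_X$ by hypothesis, that diagram yields $d_{X,Y}(\pi_{X/Y}(\alpha))=H^1(\iota)\bigl(d_Y(\beta)\big|_X\bigr)$, where $\iota\colon\mathcal O_X\to{N_{X/V}}^{\vee}\otimes N_{Y/V}\big|_X$ is the homomorphism induced by $\pi_{X/Y}$, so that $\iota(1)\cup_1\alpha=\pi_{X/Y}(\alpha)$. Substituting into the first equality and using associativity of cup products in the form $H^1(\iota)(\xi)\cup_1\alpha=\xi\cdot(\iota(1)\cup_1\alpha)=\xi\cup_2\pi_{X/Y}(\alpha)$ for $\xi\in H^1(X,\mathcal O_X)$, applied with $\xi=d_Y(\beta)\big|_X$, gives $\ob_Y(\alpha)=d_Y(\beta)\big|_X\cup_2\pi_{X/Y}(\alpha)$.

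The hard part will be (b): checking that the connecting homomorphism of the $\mathcal O_V(Y)$-twisted sequence, evaluated on the restricted section $\pi_{X/Y}(\alpha)=\alpha\circ j$, is $\delta(\alpha)\circ j$; this is the compatibility of connecting maps both with tensoring by an invertible sheaf and with precomposition along $j$. Everything else — the identifications $\Ext^1(\mathcal I_Y,\mathcal M)\cong H^1(V,\mathcal I_Y^{\vee}\otimes\mathcal M)$ used repeatedly and the fact that $\cup_1$ and $\cup_2$ are the evaluation pairings — is formal.
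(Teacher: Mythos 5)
Your argument is correct, but note that the paper itself does not prove Lemma~\ref{lem:exterior}: it is quoted from \cite[Lemmas~2.3 and 2.4]{Mukai-Nasu}, where the argument is a {\v C}ech-cocycle computation in the same transition-matrix style as the proof of Theorem~\ref{thm:original cup product} reproduced in \S\ref{subsec:primary}. Your route is genuinely different: you translate everything into $\Ext$ groups and Yoneda composition --- $H^0(X,N_{X/V})=\Hom(\mathcal I_X,\mathcal O_X)$, $H^i(X,N_{Y/V}\big\vert_X)=\Ext^i(\mathcal I_Y,\mathcal O_X)$ via invertibility of $\mathcal I_Y$, connecting maps as composition with $\mathbf e$, and $\cup_1,\cup_2$ as evaluation pairings --- after which the first equality collapses to $\alpha\circ\delta(\alpha)\circ j=\ob(\alpha)\circ j=\ob_Y(\alpha)$, and the second follows from the commutativity of \eqref{diag:d-map} together with the projection-formula identity $H^1(\iota)(\xi)\cup_1\alpha=\xi\cup_2\pi_{X/Y}(\alpha)$. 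The compatibilities you invoke are all standard and correctly isolated: local-to-global degeneration for the locally free $\mathcal I_Y$, the factorization of $\alpha$ through $N_{X/V}^{\vee}$ that identifies $\cup_1$ with Yoneda composition, and your step (b), which holds because the connecting map of $\Hom(\mathcal I_Y,-)$ applied to \eqref{ses:standard} is $\gamma\mapsto\mathbf e\circ\gamma$, and this is exactly the coboundary of the $\mathcal O_V(Y)$-twisted sequence in Definition~\ref{dfn:d-map} under the identification $\Ext^1(\mathcal I_Y,\mathcal I_X)\simeq H^1(V,\mathcal I_X\otimes\mathcal O_V(Y))$. What the Yoneda formulation buys is that both equalities become formal one-liners once the dictionary is in place; what the {\v C}ech computation in the cited reference buys is that it stays within the explicit conventions already set up for Theorem~\ref{thm:original cup product} and never has to justify that dictionary.
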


\subsection{Infinitesimal deformations with poles and polar $d$-maps}
 \label{subsec:with pole}

In this section, we recall the theory of
{\em infinitesimal deformations with poles},
which was introduced in \cite{Mukai-Nasu}.
Here we develop the study \cite[\S2.4]{Mukai-Nasu} 
on the polar $d$-maps further.
The infinitesimal deformations with poles
are defined as rational sections of some sheaves
admitting a pole along some divisor,
and they are usually regarded as the deformations of 
the open objects complementary to the poles (cf.~\cite{Nasu4}).

Let $V$ be a projective scheme, 
$Y$ and $E$ effective Cartier divisors on 
$V$ and $Y$, respectively.
Put $Y^{\circ}:=Y \setminus E$ and $V^{\circ}:=V \setminus E$ and
let $\iota: Y^{\circ} \hookrightarrow Y$ be the open immersion.
Since $\iota_*\mathcal O_{Y^{\circ}}$ contains 
$\mathcal O_Y(mE)$ as a subsheaf for any $m \ge 0$, 
there exist natural inclusions
$
\mathcal O_Y \subset \mathcal O_Y(E) \subset \cdots
\subset \mathcal O_Y(mE) \subset \cdots \subset \iota_*\mathcal O_{Y^{\circ}}
$
of sheaves on $Y$.
Similarly, since $N_{Y/V}(mE) \subset \iota_* N_{Y^{\circ}/V^{\circ}}$
for any $m \ge 0$,
we regard $H^0(Y,N_{Y/V}(mE))$
as a subgroup of $H^0(Y^{\circ},N_{Y^{\circ}/V^{\circ}})$
by the natural injective map
$$
H^0(Y,N_{Y/V}(mE)) \hookrightarrow 
H^0(Y^{\circ}, N_{Y^{\circ}/V^{\circ}}).
$$
A rational section $\beta$ of $N_{Y/V}$
admitting a pole along $E$, i.e.
$$
\beta \in H^0(Y,N_{Y/V}(mE))
$$ 
for some integer $m\ge 1$ is called an 
{\em infinitesimal deformation of $Y$ with a pole} along $E$.
Every infinitesimal deformation of $Y$ in $V$ 
with a pole induces a first order
deformation of $Y^{\circ}$ in $V^{\circ}$ by the above injection.

Now we assume that the natural map
\begin{equation}
  \label{map:inclusion of H^1}
  H^1(Y,\mathcal O_Y(mE)) \longrightarrow H^1(Y,\mathcal O_Y((m+1)E))
\end{equation}
is injective for any integer $m \ge 1$.
Then since $V$ is projective,
by the same argument as in \cite[Lemma~2.5]{Mukai-Nasu},
the natural map
$$
H^1(Y, \mathcal O_Y(mE))
\longrightarrow  H^1(Y^{\circ}, \mathcal O_{Y^{\circ}})
$$
is injective. 
By this map, we regard $H^1(Y,\mathcal O_Y(mE))$ as a subgroup of
$H^1(Y^{\circ},\mathcal O_{Y^{\circ}})$.
Given an invertible sheaf  $L$ on $Y$, 
we identify an element of $H^1(Y,\mathcal O_Y(mE))$ 
as a first order deformation of
the invertible sheaf $L^{\circ}:=\iota_* L$ on $Y^{\circ}$,
and call it an {\em infinitesimal deformation of $L$ with a pole} along $E$. 

Let $m \ge 1$ be an integer and $\beta \in H^0(Y,N_{Y/V}(mE))$ 
an infinitesimal deformation of $Y$ with a pole along $E$.
Let $d_{Y^{\circ}}: H^0(Y^{\circ},N_{Y^{\circ}/V^{\circ}})
\rightarrow H^1(Y^{\circ},\mathcal O_{Y^{\circ}})$ be the $d$-map
\eqref{map:divisorial d-map}
for $Y^{\circ} \subset V^{\circ}$.
The following is a generalization of
\cite[Proposition 2.6]{Mukai-Nasu}, which enables us to
compute the singularity of
$d_{Y^{\circ}}(\beta) \in H^1(Y^{\circ},\mathcal O_{Y^{\circ}})$ 
along the boundary $E$. 

\begin{prop}\label{prop:key}
  Let $m \ge 1$ be an integer. Then
  \begin{enumerate}
    \item $d_{Y^{\circ}}(H^0(Y,N_{Y/V}(mE)))
    \subset H^1(Y,\mathcal O_Y((m+1)E)).$
    \item Let $d_Y$ be the restriction of $d_{Y^{\circ}}$
    to $H^0(Y,N_{Y/V}(mE))$,
    and let 
    $\partial_E$ be the coboundary map of \eqref{ses:normal bundle of E}.
    Then the diagram
    \begin{equation*}
      \begin{CD}
	H^0(Y,N_{Y/V}(mE)) @>d_{Y}>> H^1(Y,\mathcal O_Y((m+1)E)) \\ 
	@V{|_E}VV @V{|_E}VV \\
	H^0(E,N_{Y/V}(mE)\big{\vert}_E) @>{m\partial_E}>>
	H^1(E,\mathcal O_E((m+1)E))
      \end{CD}
    \end{equation*}
    is commutative.
  \end{enumerate}
\end{prop}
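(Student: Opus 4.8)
The plan is to reduce both assertions to an explicit {\v C}ech computation and then carry the exponent $m$ through the argument of \cite[Proposition~2.6]{Mukai-Nasu} (the case $m=1$). Fix an affine open cover $\mathfrak U=\{U_i\}$ of $V$ with $Y\cap U_i=V(g_i)$, $g_i\in\mathcal O(U_i)$, and transition relations $g_j=a_{ij}g_i$ on $U_{ij}$, so $\{a_{ij}\}$ represents $\mathcal O_V(Y)$. Choose $\tilde h_i\in\mathcal O(U_i)$ restricting to a local equation $h_i:=\tilde h_i\big{\vert}_Y$ of $E$ on $Y$, and note that $V(\tilde h_i)\cap Y=V(h_i)=E\cap U_i$ as subschemes, so the auxiliary divisor $V(\tilde h_i)\subset V$ meets $Y$ only along $E$. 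Writing $\{c_{ij}\}$ for the {\v C}ech cocycle of $\mathcal O_Y(E)$ and picking lifts $\tilde c_{ij}\in\mathcal O(U_{ij})$, we record the ``cross term'' $e_{ij}\in\mathcal O(U_{ij})$ defined by $\tilde h_j=\tilde c_{ij}\tilde h_i+e_{ij}g_i$; this is the quantity that will produce the answer.

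For $\beta\in H^0(Y,N_{Y/V}(mE))$, write $\beta\big{\vert}_{U_i}=\overline{u_i}\,(g_ih_i^m)^{-1}$ with $u_i\in\mathcal O(U_i)$, so that $u_j-a_{ij}\tilde c_{ij}^m u_i=\phi_{ij}g_i$ for some $\phi_{ij}\in\mathcal O(U_{ij})$, and lift $\beta$ to the \emph{rational} section $w_i:=u_i\,(g_i\tilde h_i^m)^{-1}$ of $\mathcal O_V(Y)$. Since $V(\tilde h_i)$ meets $Y$ only in $E$, the restrictions $(w_j-w_i)\big{\vert}_Y$ are regular on $Y^\circ$, and the $1$-cochain $\{(w_j-w_i)\big{\vert}_{Y^\circ}\}$ is a cocycle representing $d_{Y^\circ}(\beta)$ (it differs from the cocycle coming from a genuine regular local lift of $\beta$ by a coboundary). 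The first computation is of $w_j-w_i$ modulo $(g_i)$: expanding $\tilde h_j^m=(\tilde c_{ij}\tilde h_i)^m+m(\tilde c_{ij}\tilde h_i)^{m-1}e_{ij}g_i+(\text{multiples of }g_i^2)$ and substituting the transition relations yields
$$
(w_j-w_i)\big{\vert}_Y\ =\ -\,m\,\overline{u_i}\,\overline{e_{ij}}\,c_{ij}^{-1}\,h_i^{-m-1}\ +\ \overline{\phi_{ij}}\,\overline{a_{ij}}^{-1}\,c_{ij}^{-m}\,h_i^{-m},
$$
a {\v C}ech $1$-cocycle with values in $\mathcal O_Y((m+1)E)$. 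Together with \eqref{map:inclusion of H^1}, which (as noted just before the proposition) gives $H^1(Y,\mathcal O_Y((m+1)E))\hookrightarrow H^1(Y^\circ,\mathcal O_{Y^\circ})$, this proves~(1) and identifies $d_Y(\beta)$ with the class of the above cocycle. The constant $m$ is nothing but the derivative of $t\mapsto t^{-m}$; the only thing to check is that no further pole-order-$(m+1)$ term survives on $Y$, and it does not, because every higher term of the expansion carries an extra factor $g_i$.

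For~(2) I would restrict this cocycle to $E$. Under $\mathcal O_Y((m+1)E)\to\mathcal O_E((m+1)E)$ only the leading (pole-order-$(m+1)$) coefficient survives, so the second summand dies and $d_Y(\beta)\big{\vert}_E$ is the class of $\bigl\{-m\,(\overline{u_i}\,\overline{e_{ij}}\,c_{ij}^{-1})\big{\vert}_E\cdot h_i^{-m-1}\big{\vert}_E\bigr\}$. Separately, I would compute $\partial_E(\beta\big{\vert}_E)$ directly from \eqref{ses:normal bundle of E} (with $S=Y$, tensored by $\mathcal O_E(mE)$): locally lift $\beta\big{\vert}_E$ through $N_{E/V}(mE)\twoheadrightarrow N_{Y/V}(mE)\big{\vert}_E$ using the frame of $N_{E/V}$ dual to $\{g_i,\tilde h_i\}$, then take the coboundary. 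The only contribution is through the off-diagonal entry $e_{ij}$ of the conormal transition matrix $\bigl(\begin{smallmatrix}a_{ij}&0\\ e_{ij}&c_{ij}\end{smallmatrix}\bigr)$ of $\mathcal I_E/\mathcal I_E^2$, and after the identification $N_{E/Y}(mE)=\mathcal O_E(E)\otimes\mathcal O_E(mE)=\mathcal O_E((m+1)E)$ one finds $\partial_E(\beta\big{\vert}_E)_{ij}=-(\overline{u_i}\,\overline{e_{ij}}\,c_{ij}^{-1})\big{\vert}_E\cdot h_i^{-m-1}\big{\vert}_E$. Comparing the two cocycles gives $d_Y(\beta)\big{\vert}_E=m\,\partial_E(\beta\big{\vert}_E)$, which is the required commutativity.

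The conceptual content is thus minimal, and the main obstacle is bookkeeping: keeping the three families of transition data $a_{ij}$ (for $\mathcal O_V(Y)$), $c_{ij}$ (for $\mathcal O_Y(E)$) and the cross terms $e_{ij}$ mutually consistent, and carrying out the dual-frame computation for $N_{E/V}$ along $\{g_i,\tilde h_i\}$ so that the inclusion $N_{E/Y}\hookrightarrow N_{E/V}$ and the projection $N_{E/V}\to N_{Y/V}\big{\vert}_E$ of \eqref{ses:normal bundle of E} are written in the same local frames as $\beta$ and $d_{Y^\circ}(\beta)$. One also has to make sure the rational lift $w_i$ genuinely computes $d_{Y^\circ}(\beta)$; this is exactly where the identity $V(\tilde h_i)\cap Y=E\cap U_i$ is used.
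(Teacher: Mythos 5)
Your proposal is correct and follows essentially the same route as the paper's proof: a \v Cech computation with local equations $g_i$ of $Y$ and lifted equations $\tilde h_i$ of $E$, where the factor $m$ comes from expanding $\tilde h_j^m$ modulo $g_i^2$ (the paper's telescoping identity $x_i^m-c_{ij}^mx_j^m=\sum_{k=0}^{m-1}x_i^{m-1-k}(c_{ij}x_j)^k(x_i-c_{ij}x_j)$ is the same expansion in different clothing) and your cross term $e_{ij}$ is exactly the paper's cochain $\{\overline{b_{ij}y_j}/\bar x_i\}$ whose restriction to $E$ represents the extension class of \eqref{ses:normal bundle of E}. The only cosmetic difference is that you verify the identification of $\partial_E$ with that cocycle by a direct dual-frame computation on $\mathcal I_E/\mathcal I_E^2$, whereas the paper quotes it from \cite{Mukai-Nasu}.
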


In other words, if $Y$ is a hypersurface in $V$,
then every infinitesimal deformation of $Y \subset V$ with a pole
induces that of the invertible sheaf $N_{Y/V}$.
The principal part of $d_{Y^\circ}(\beta)$ along $E$
coincides with the coboundary $\partial_E(\beta\big{\vert}_E)$
of the principal part $\beta\big{\vert}_E$,
up to constant.

\begin{proof}
The proof is similar to the one in \cite{Mukai-Nasu}, where
$Y$ is a surface by assumption.
Let $\mathfrak U:=\{ U_i \}_{i \in I}$ be an open affine covering of $V$ 
and let $x_i=y_i=0$ be the local equation of $E$ over $U_i$ such that
$y_i$ defines $Y$ in $U_i$.
Through the proof, for a local section $t$ of a sheaf $\mathcal F$ on $V$, 
$\bar t$ denotes the restriction 
$t\big{\vert}_Y \in \mathcal F\big{\vert}_Y$ for conventions.
Let $D_{x_i}$ and $D_{\bar x_i}$ denote the open affine subsets of 
$U_i$ and $U_i \cap Y$ defined by $x_i \ne 0$ and $\bar x_i \ne 0$,
respectively.
Then 
$\left\{ D_{\bar x_i} \right\}_{i \in I}$ is an open affine covering of 
$Y^{\circ}$, since $D_{\bar x_i}=D_{x_i} \cap Y=U_i \cap Y^{\circ}$.

Let $\beta$ be a global section of $N_{Y/V}(mE) \simeq \mathcal O_Y(Y)(mE)$.
Then the product $\bar x_i^m \beta$ is contained in $H^0(U_i,\mathcal O_Y(Y))$
and lifts to a section
$s_i' \in \Gamma(U_i, \mathcal O_V(Y))$ since $U_i$ is affine.
In particular,
$\beta$ lifts to the section $s_i:=s_i'/x_i^m$ of 
$\mathcal O_{V^{\circ}}(Y^{\circ})$ over $D_{x_i}$.
Let $\delta: H^0(Y^{\circ},N_{Y^{\circ}/V^{\circ}})
\rightarrow H^1(Y^{\circ},\mathcal O_{V^{\circ}})$ 
be the coboundary map
in the definition \eqref{map:divisorial d-map} of $d_{Y^{\circ}}$.
Then we have
$$
\delta(\beta)_{ij}= s_j-s_i
=\dfrac{s_j'}{x_j^m} -\dfrac{s_i'}{x_i^m}
\qquad
\mbox{in}
\qquad
\Gamma(D_{x_i} \cap D_{x_j}, \mathcal O_{V^{\circ}}(Y^{\circ}))
$$
for every $i,j$.
Since $\beta$ is a global section of $N_{Y^{\circ}/V^{\circ}}$,
$\delta(\beta)_{ij}$ is contained in 
$\Gamma(D_{x_i} \cap D_{x_j}, \mathcal O_{V^{\circ}})$.
Now we put
$$
f_{ij}:=x_i^mx_j^m\delta(\beta)_{ij}=x_i^ms_j'-x_j^ms_i'.
$$
Since $x_i^ms_i=s_i' \in \Gamma(U_i,\mathcal O_V(Y))$ 
for every $i$, $f_{ij}$ is a section of $\mathcal O_{U_{ij}}$.
Now we recall the relations between the
local equations $x_i,y_i$ of $E$ over $U_i$.
Since the two ideals $(x_i,y_i)$ and $(x_j,y_j)$ agree
on the overlap $U_{ij}$,
there exist elements
$b_{ij}$ and $c_{ij}$ of $\mathcal O_{U_{ij}}$ satisfying
$x_i = b_{ij} y_j + c_{ij} x_j$.
Then we have
\begin{align}
  f_{ij}&=(x_i^m-c_{ij}^mx_j^m)s_j'+x_j^m(c_{ij}^ms_j'-s_i')\notag \\
   &=\sum_{k=0}^{m-1}x_i^{m-1-k}(c_{ij}x_j)^k(x_i-c_{ij}x_j)s_j'
  +x_j^m(c_{ij}^ms_j'-s_i')\notag \\
  &=\sum_{k=0}^{m-1}x_i^{m-1-k}(c_{ij}x_j)^kb_{ij}y_js_j'
  +x_j^m(c_{ij}^ms_j'-s_i')\label{eqn:f_ij}.
\end{align}
Since $y_j \in \Gamma(U_j,\mathcal O_V(-Y))$ and
$s_j' \in \Gamma(U_j,\mathcal O_V(Y))$,
$b_{ij}y_js_j'$ is a section of $\mathcal O_{U_{ij}}$,
while $c_{ij}^ms_j'-s_i' \in \Gamma(U_{ij},\mathcal O_V(Y))$ 
is also a section of $\mathcal O_{U_{ij}}$,
because we have
$$
\ov{c_{ij}^ms_j'-s_i'}
=\bar c_{ij}^m\bar x_j^m\beta-\bar x_i^m\beta
=(\bar c_{ij}^m\bar x_j^m-\bar x_i^m)\beta=0
$$
in $\Gamma(Y\cap U_{ij},N_{Y/V})$.
Therefore, $f_{ij}$ is contained in 
$\Gamma(U_{ij},{\mathcal I_E}^{m-1})$ by \eqref{eqn:f_ij},
and hence $\bar f_{ij}$ is contained in
$\Gamma(Y\cap U_{ij},\mathcal O_Y((-m+1)E))$.
This implies that
\begin{equation}
 \label{eqn:d_Y}
 d_{Y^{\circ}}(\beta)_{ij} =(\delta(\beta)_{ij})\big{\vert}_{Y^\circ}= 
  \dfrac{\bar f_{ij}}{\bar x_i^m \bar x_j^m}
  \qquad
  \mbox{in}
  \qquad
  \Gamma(D_{\bar x_i} \cap D_{\bar x_j}, \mathcal O_{Y^{\circ}}), 
\end{equation}
is contained in 
$\Gamma(Y\cap U_{ij}, \mathcal O_Y((m+1)E))$.
Thus we have proved (1).

Now we compute the image of $d_Y(\beta)=d_{Y^{\circ}}(\beta)$ by
the restriction map 
$H^1(Y,\mathcal O_Y((m+1)E)) \rightarrow H^1(E,\mathcal O_E((m+1)E))$,
regarding $\mathcal O_E((m+1)E)$ 
as the quotient sheaf $\mathcal O_Y((m+1)E)/\mathcal O_Y(mE)$.
Since $\ov x_i=\ov{c_{ij}x_j}$ and $\beta=\ov s_j'/\ov x_j^m$,
it follows from \eqref{eqn:f_ij} and \eqref{eqn:d_Y} that
$$
d_Y(\beta)_{ij}
= \sum_{k=0}^{m-1}\left(
  \dfrac{\ov{c_{ij}x_j}}{\ov x_i}
\right)^k
\dfrac{\overline{b_{ij}y_j}}{\ov x_i}
\dfrac{\ov s_j'}{\ov x_j^m}
+\dfrac{\ov{c_{ij}^ms_j'-s_i'}}{\ov x_i^m}
= m\dfrac{\overline{b_{ij}y_j}}{\ov x_i}\beta
+\dfrac{\ov{c_{ij}^ms_j'-s_i'}}{\ov x_i^m}
$$
in $\Gamma(Y\cap U_{ij}, \mathcal O_Y((m+1)E))$.
Since $c_{ij}^ms_j' - s_i'$ a section of $\mathcal O_{U_{ij}}$,
$\ov{c_{ij}^ms_j'-s_i'}/\bar x_i^m$ is contained in 
$\Gamma(Y\cap U_{ij},\mathcal O_Y(mE))$.
On the other hand, the restriction of the $1$-cochain 
$\left\{ \ov{b_{ij}y_j}/\bar x_i \right\}_{i,j \in I}$ to $E$ is a cocycle and 
represents the extension class $\mathbf e' \in H^1(E,\mathcal O_E(-Y+E))$ 
of the exact sequence \eqref{ses:normal bundle of E} (cf.~\cite{Mukai-Nasu}).
Therefore $d_Y(\beta)\big{\vert}_E$ is equal to
$m\mathbf e' \cup (\beta\big{\vert}_E)=m\partial_E(\beta\big{\vert}_E)$,
which implies (2).
\end{proof}

We finish this section by giving a refinement of Proposition~\ref{prop:key},
which will be used in the proof of Theorem~\ref{thm:refinement}.
Let $E_i$ ($1\le i\le k$) be irreducible Cartier divisors on $Y$.
Suppose that $E_i$ are mutually disjoint,
i.e., $E_i \cap E_j=\emptyset$ for all $i,j$.
We suppose furthermore that
for any two effective divisors $D,D'$ on $S$
with supports on $\bigcup_{i=1}^k E_i$,
if $D \le D'$, then the natural map
$H^1(Y,\mathcal O_Y(D)) \rightarrow H^1(Y,\mathcal O_Y(D'))$
is injective. 
Then as we have seen before, for any such divisor $D$, 
$H^1(Y,\mathcal O_Y(D))$ is regarded as a subgroup of
$H^1(Y^{\circ},\mathcal O_{Y^{\circ}})$, 
where $Y^{\circ}:=Y\setminus \bigcup_{i=1}^k E_i$.
Let $E=\sum_{i=1}^k m_i E_i$ be an effective divisor on $Y$
with coefficients $m_i \ge 1$, and let $\beta \in H^0(Y,N_{Y/V}(E))$.
We put $V^{\circ}:=V\setminus \bigcup_{i=1}^k E_i$ and 
denote by $d_{Y^{\circ}}$ the $d$-map 
\eqref{map:divisorial d-map} for $Y^{\circ} \subset V^{\circ}$.
If $H^1(Y,N_{Y/V})=0$, then by the following lemma,
$\beta\in H^0(Y,N_{Y/V}(E))$
is written as a $k$-linear combination $\sum_{i=1}^k c_i \beta_i$
of $\beta_i \in H^0(Y,N_{Y/V}(m_iE_i))$.

\begin{lem}
  \label{lem:k-linear combination}
  Let $L$ be an invertible sheaf on $Y$ with $H^1(Y,L)=0$,
  and let $E,E'$ be two effective divisors on $Y$
  whose supports are mutually disjoint. Then the natural map
  $
  H^0(Y,L(E))\oplus H^0(Y,L(E')) \rightarrow H^0(Y,L(E+E'))
  $
  is surjective.
\end{lem}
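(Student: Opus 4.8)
The plan is to use the short exact sequence relating the three sheaves together with the vanishing $H^1(Y,L)=0$. Since $E$ and $E'$ have disjoint supports, the natural inclusions $L(E)\hookrightarrow L(E+E')$ and $L(E')\hookrightarrow L(E+E')$ combine into a map $L(E)\oplus L(E')\to L(E+E')$; I claim its cokernel is $L$ itself, or more precisely a quotient killed by $H^1(Y,L)=0$. Concretely, first I would work locally: away from $\Supp E\cup \Supp E'$ all three sheaves coincide with $L$; near a point of $\Supp E$ (disjoint from $\Supp E'$) we have $L(E')=L$ and the map $L(E)\oplus L\to L(E+E')=L(E)$ is $(a,b)\mapsto a+b$, which is visibly surjective; symmetrically near $\Supp E'$. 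Hence the map $L(E)\oplus L(E')\to L(E+E')$ of sheaves is \emph{surjective} on $Y$, with kernel isomorphic to $L$ via $s\mapsto(s,-s)$ (the diagonal copy). This gives a short exact sequence
\[
0\longrightarrow L \longrightarrow L(E)\oplus L(E') \longrightarrow L(E+E')\longrightarrow 0.
\]

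Taking the long exact sequence of cohomology, the relevant piece is
\[
H^0(Y,L(E))\oplus H^0(Y,L(E'))\longrightarrow H^0(Y,L(E+E'))\longrightarrow H^1(Y,L).
\]
Since $H^1(Y,L)=0$ by hypothesis, the first map is surjective, which is exactly the assertion.

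The only genuinely nontrivial point — the ``hard part'', such as it is — is verifying the surjectivity of the sheaf map $L(E)\oplus L(E')\to L(E+E')$ and the identification of its kernel with $L$; both reduce to the purely local computation sketched above, using crucially that $\Supp E$ and $\Supp E'$ are disjoint so that at each point at most one of the two divisors contributes. Everything else is the standard long exact cohomology sequence and the given vanishing. I would present the local check briefly (distinguishing the three cases: outside both supports, near $\Supp E$, near $\Supp E'$) and then invoke the long exact sequence to conclude.
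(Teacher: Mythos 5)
Your proof is correct and is essentially the paper's own argument: the paper simply cites the Koszul-type exact sequence $[0 \rightarrow \mathcal O_Y \rightarrow \mathcal O_Y(E) \oplus \mathcal O_Y(E') \rightarrow \mathcal O_Y(E+E') \rightarrow 0]\otimes L$ and concludes from $H^1(Y,L)=0$, while you additionally write out the local verification of its exactness using the disjointness of the supports. No gap.
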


\begin{proof}
It follows from the exact sequence
$[0 \rightarrow \mathcal O_Y
  \rightarrow \mathcal O_Y(E) \oplus \mathcal O_Y(E')
  \rightarrow \mathcal O_Y(E+E')
  \rightarrow 0]\otimes L$
on $Y$ of Koszul type. 
\end{proof}

Since the $d$-map $d_{Y^{\circ}}$ is $k$-linear, we find
$d_{Y^{\circ}}(\beta)=\sum_{i=1}^k c_i d_{Y^{\circ}}(\beta_i)$.
Because for each $i$, $d_{Y^{\circ}}(\beta_i)$ is contained in 
$H^1(Y,\mathcal O_Y((m_i+1)E_i))$ by 
Proposition~\ref{prop:key2},
$d_{Y^{\circ}}(\beta)$ is contained in 
$H^1(Y,\mathcal O_Y(\sum_{i=1}^k (m_i+1)E_i))$. 
Furthermore, since $E_i$'s are mutually disjoint,
we have $d_{Y^{\circ}}(\beta_i)\big{\vert}_{E_j}=0$ if $i \ne j$
and 
$d_{Y^{\circ}}(\beta_i)\big{\vert}_{E_i}
=m_i\partial_{E_i}(\beta_i\big{\vert}_{E_i})$ 
by the same proposition. Thus we conclude that
\begin{prop}
  \label{prop:key2}
  Let $m_i \ge 1$ be integers. If $H^1(Y,N_{Y/V})=0$, then
  \begin{enumerate}
    \item $d_{Y^{\circ}}(H^0(Y,N_{Y/V}(\sum_{i=1}^k m_i E_i)))
    \subset H^1(Y,\mathcal O_Y(\sum_{i=1}^k (m_i+1)E_i))$.
    \item Let $d_Y$ be the restriction of $d_{Y^{\circ}}$
    to $H^0(Y,N_{Y/V}(\sum_{i=1}^k m_i E_i))$. Then the diagram
    \begin{equation*}
      \begin{CD}
	H^0(Y,N_{Y/V}(\sum_{i=1}^k m_i E_i)) @>d_{Y}>>
	H^1(Y,\mathcal O_Y(\sum_{i=1}^k (m_i+1)E_i)) \\ 
	@V{|_{E_i}}VV @V{|_{E_i}}VV \\
	H^0(E_i,N_{Y/V}(m_iE_i)\big{\vert}_{E_i}) 
	@>{m_i\partial_{E_i}}>>
	H^1(E_i,\mathcal O_{E_i}((m_i+1)E_i))
      \end{CD}
    \end{equation*}
    is commutative for any $i=1,\dots,k$.
  \end{enumerate}
\end{prop}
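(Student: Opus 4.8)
The plan is to reduce the statement to the single--divisor case already established in Proposition~\ref{prop:key}, the only genuinely new ingredient being a decomposition of a section with poles along the disjoint divisors $E_1,\dots,E_k$ into a sum of sections each having a pole along a single $E_i$. First I would peel off the $E_i$ one at a time: since the supports of the $E_i$ are pairwise disjoint and $H^1(Y,N_{Y/V})=0$, iterating Lemma~\ref{lem:k-linear combination} with $L=N_{Y/V}$ shows that the natural map $\bigoplus_{i=1}^k H^0(Y,N_{Y/V}(m_iE_i))\to H^0(Y,N_{Y/V}(\sum_{i=1}^k m_iE_i))$ is surjective. Hence $\beta=\sum_{i=1}^k\beta_i$ for suitable $\beta_i\in H^0(Y,N_{Y/V}(m_iE_i))$ (the scalars $c_i$ from the statement can be absorbed into the $\beta_i$). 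Since the $d$-map $d_{Y^\circ}$ is $k$-linear, $d_{Y^\circ}(\beta)=\sum_i d_{Y^\circ}(\beta_i)$, so it suffices to analyse each $d_{Y^\circ}(\beta_i)$ together with its restriction to each $E_j$.

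Next I would apply Proposition~\ref{prop:key} to each $\beta_i$ individually. The section $\beta_i$ has its only pole along $E_i$, so it already defines an infinitesimal deformation of $Y_i:=Y\setminus E_i$ inside $V_i:=V\setminus E_i$, i.e.\ $\beta_i\in H^0(Y,N_{Y/V}(m_iE_i))\subset H^0(Y_i,N_{Y_i/V_i})$. Applying Proposition~\ref{prop:key} with the data $(Y,E,m)=(Y,E_i,m_i)$ --- whose injectivity hypothesis is exactly the case $kE_i\le(k+1)E_i$ of the standing hypothesis --- gives $d_{Y_i}(\beta_i)\in H^1(Y,\mathcal{O}_Y((m_i+1)E_i))$ and $d_{Y_i}(\beta_i)|_{E_i}=m_i\,\partial_{E_i}(\beta_i|_{E_i})$, where $\partial_{E_i}$ is the coboundary of \eqref{ses:normal bundle of E} with $(S,E,m)=(Y,E_i,m_i)$. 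Because the $d$-map \eqref{map:divisorial d-map} is built out of a coboundary map followed by restriction to the divisor, and both constructions are compatible with further restriction to an open subscheme, $d_{Y^\circ}(\beta_i)$ is simply the image of $d_{Y_i}(\beta_i)$ under $H^1(Y,\mathcal{O}_Y((m_i+1)E_i))\hookrightarrow H^1(Y,\mathcal{O}_Y(\sum_j(m_j+1)E_j))$; in particular each $d_{Y^\circ}(\beta_i)$ already lies in $H^1(Y,\mathcal{O}_Y((m_i+1)E_i))$.

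Finally I would assemble the two assertions. Summing over $i$ gives $d_{Y^\circ}(\beta)\in H^1(Y,\mathcal{O}_Y(\sum_j(m_j+1)E_j))$, which is (1). For (2), fix $i$ and restrict $d_{Y^\circ}(\beta)=\sum_j d_{Y^\circ}(\beta_j)$ to $E_i$. For $j\ne i$ the divisor $(m_j+1)E_j$ is supported off $E_i$, so $(m_j+1)E_j\le\sum_l(m_l+1)E_l-E_i$; hence $d_{Y^\circ}(\beta_j)$ is the image of a class in $H^1(Y,\mathcal{O}_Y(\sum_l(m_l+1)E_l-E_i))$, and therefore restricts to $0$ in $H^1(E_i,\mathcal{O}_{E_i}((m_i+1)E_i))$ via the long exact sequence of $0\to\mathcal{O}_Y(\sum_l(m_l+1)E_l-E_i)\to\mathcal{O}_Y(\sum_l(m_l+1)E_l)\to\mathcal{O}_{E_i}((m_i+1)E_i)\to0$. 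Thus $d_{Y^\circ}(\beta)|_{E_i}=d_{Y^\circ}(\beta_i)|_{E_i}=m_i\,\partial_{E_i}(\beta_i|_{E_i})$. On the other side, $\beta_j$ is sent into $H^0(Y,N_{Y/V}(\sum_l m_lE_l))$ by multiplication with the canonical section of $\mathcal{O}_Y(\sum_{l\ne j}m_lE_l)$; for $j\ne i$ this section vanishes along $E_i\subset\sum_{l\ne j}m_lE_l$, so $\beta_j|_{E_i}=0$, while for $j=i$ it restricts to a unit on $E_i$, so $\beta|_{E_i}=\beta_i|_{E_i}$ in $H^0(E_i,N_{Y/V}(m_iE_i)|_{E_i})$. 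Combining the two computations yields $d_{Y^\circ}(\beta)|_{E_i}=m_i\,\partial_{E_i}(\beta|_{E_i})$, i.e.\ the claimed commutativity.

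The step I expect to require the most care is the compatibility of the two $d$-maps $d_{Y^\circ}$ and $d_{Y_i}$ under restriction to open subschemes, together with the matching bookkeeping of twists in the last paragraph --- checking that the disjointness of the $E_i$ is invoked in exactly the right places (vanishing of the canonical section along the other $E_l$, and the placement of $d_{Y^\circ}(\beta_j)$ in the correct subgroup). Beyond that, no new conceptual input is needed over Proposition~\ref{prop:key} and Lemma~\ref{lem:k-linear combination}.
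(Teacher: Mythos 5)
Your proposal is correct and follows essentially the same route as the paper: decompose $\beta=\sum_i\beta_i$ via (an iteration of) Lemma~\ref{lem:k-linear combination} using $H^1(Y,N_{Y/V})=0$, apply Proposition~\ref{prop:key} to each $\beta_i$, and use linearity of $d_{Y^\circ}$ together with the disjointness of the $E_i$ to get both the containment and the restriction formulas. You in fact supply a bit more detail than the paper does (the compatibility of $d_{Y^\circ}$ with $d_{Y_i}$ under restriction to opens, and the verification that $\beta\big{\vert}_{E_i}=\beta_i\big{\vert}_{E_i}$), but the argument is the same.
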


\subsection{Hilbert-flag schemes}
\label{subsec:flag schemes}

In this section, we recall some basic results on Hilbert-flag schemes.
For the construction (the existence), the local properties, etc.,
of the Hilbert-flag schemes, we refer to
\cite{Kleppe81,Kleppe87,Hartshorne10,Sernesi}.

Let $V$ be a projective scheme, 
and let $X, Y$ be two closed subschemes of $V$ such that $X \subset Y$,
with the Hilbert polynomials $P, Q$, respectively.
Then there exists a projective scheme $\HF_{P,Q} V$,
called the Hilbert-flag scheme of $V$, 
parametrising all pairs $(X',Y')$ of closed subschemes 
$X' \subset Y' \subset V$
with the Hilbert polynomials $P$ and $Q$, respectively.
There exists a natural diagram of the Hilbert(-flag) schemes
$$
\raisebox{20pt}{
\xymatrix{
\HF_{P,Q} V \ar[r]^{pr_2} \ar[d]_{pr_1}  & \Hilb_Q V \\
\Hilb_P V &  
}}
\quad
\left(
\raisebox{20pt}{
\xymatrix{
(X,Y) {\ar@{|->}[r]} \ar@{|->}[d]  & Y \\ 
X &  
}}
\right),
$$
where $pr_i$ ($i=1,2$) are the forgetful morphisms, i.e., the projections.
We denote the tangent space of $\HF V$ at $(X,Y)$ by $A^1(X,Y)$.
Then there exists a Cartesian diagram
\begin{equation}
  \label{diag:cartesian}
  \raisebox{25pt}{
  \xymatrix{
    A^1(X,Y) \ar[d]_{p_1} \ar[r]^{p_2} \ar@{}[dr]|\square & 
    H^0(Y,N_{Y/V}) \ar[d]_{\rho} \\
    H^0(X,N_{X/V}) \ar[r]^{\pi_{X/Y}} & H^0(X,N_{Y/V}\big{\vert}_X) \\
  }}
\end{equation}
where $p_i$ is the tangent map of $pr_i$ ($i=1,2$), 
$\rho$ is the restriction map,
and $\pi_{X/Y}$ is the projection.
In what follows, we assume that $X$ and $Y$ are smooth
and $\Hilb V$ is nonsingular at $[Y]$. Let 
$$
\partial_X: H^0(X,N_{Y/V}\big{\vert}_X)
\rightarrow H^1(X,N_{X/Y})
$$
be the coboundary map of the exact sequence
$0\rightarrow N_{X/Y} \rightarrow N_{X/V}
\overset{\pi_{X/Y}}\rightarrow N_{Y/V}\big{\vert}_X
\rightarrow 0$ on $V$ and 
let $\alpha_{X/Y}$ be the composition $\partial_X \circ \rho$
of $\rho$ with $\partial_X$.
Then since $\Hilb V$ is nonsingular at $[Y]$,
every obstruction to deforming a pair $(X,Y)$
of subschemes $X,Y$ with $X \subset Y \subset V$ is 
contained in the group
\begin{equation}
\label{eqn:obst.sp.flag}
A^2(X,Y):=\coker\alpha_{X/Y}, 
\end{equation}
and we have 
\begin{equation}
  \label{ineq:dimension of flag}
  \dim A^1(X,Y) - \dim A^2(X,Y) \le \dim_{(X,Y)} \HF V \le \dim A^1(X,Y) 
\end{equation}
(cf.~\cite[Theorem 1.3.2]{Kleppe81}, \cite[\S2]{Kleppe87}).
Thus $A^2(X,Y)$ represents the obstruction space of $\HF V$ at $(X,Y)$.
There exists an exact sequence
\begin{align}
  \label{seq:flag to hilb}
  0 &\longrightarrow H^0(Y,\mathcal I_{X/Y}\otimes N_{Y/V}) 
  \longrightarrow A^1(X,Y)
  \longrightarrow H^0(X,N_{X/V})  \notag \\
  &\longrightarrow \coker \rho
  \longrightarrow A^2(X,Y)
  \longrightarrow H^1(X,N_{X/V})  
  \longrightarrow H^1(X,N_{Y/V}\big{\vert}_X) 
\end{align}
of cohomology groups, which connects 
the tangent spaces and the obstruction spaces of Hilbert(-flag) schemes
(see \cite{Kleppe81,Kleppe87} for the proof).
If we have $H^i(Y,N_{Y/V})=0$ for $i=1,2$, then we deduce from 
the exact sequence
$
[0 \rightarrow \mathcal I_{X/Y}
\rightarrow \mathcal O_Y \rightarrow \mathcal O_X
\rightarrow 0]\otimes N_{Y/V}
$
the two isomorphisms 
$\coker \rho \simeq H^1(Y,\mathcal I_{X/Y}\otimes N_{Y/V})$
and 
$H^1(X,N_{Y/V}\big{\vert}_X) \simeq H^2(Y,\mathcal I_{X/Y}\otimes N_{Y/V})$.
If $\dim X=1$ then the last map of \eqref{seq:flag to hilb} is surjective.
Thus we obtain (3) of the next lemma.
\begin{lem}
  \begin{enumerate}
    \item If $\rho$ is surjective (cf.~\eqref{diag:cartesian}),
    then $pr_1: \HF V \rightarrow \Hilb V$ is smooth at $(X,Y)$
    (cf.~\cite[Lemma A10]{Kleppe87}).
    \item If $H^0(Y,\mathcal I_{X/Y}\otimes N_{Y/V})=0$, then
    $pr_1$ is a local embedding at $(X,Y)$.
    \item If $\dim X=1$, $\dim Y=2$ and $H^i(Y,N_{Y/V})=0$ ($i=1,2$),
    then we have
    \begin{align}
      \label{eqn:expected dimension}
      \dim A^1(X,Y)- \dim A^2(X,Y)
      &=\chi(X,N_{X/V})+\chi(Y,\mathcal I_{X/Y}\otimes N_{Y/V}) \notag \\
      &=\chi(X,N_{X/Y})+\chi(Y,N_{Y/V}). 
    \end{align}
  \end{enumerate}
  \label{lem:flag to hilb}
\end{lem}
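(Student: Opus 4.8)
The plan is to read off all three assertions from the Cartesian diagram \eqref{diag:cartesian} and the exact sequence \eqref{seq:flag to hilb}, together with standard deformation theory of Hilbert-flag schemes; no new geometric input is needed. For (1), I would argue as follows. Since \eqref{diag:cartesian} is Cartesian, the tangent map $p_1\colon A^1(X,Y)\to H^0(X,N_{X/V})$ is the base-change of $\rho\colon H^0(Y,N_{Y/V})\to H^0(X,N_{Y/V}|_X)$ along $\pi_{X/Y}$, hence if $\rho$ is surjective then so is $p_1$, i.e.\ $pr_1$ is unramified-free on tangent spaces at $(X,Y)$. To upgrade this to smoothness one needs the obstruction-theoretic statement: the map $pr_1$ is smooth at $(X,Y)$ provided that on tangent spaces $p_1$ is surjective \emph{and} the obstruction map $A^2(X,Y)\to H^1(X,N_{X/V})$ appearing in \eqref{seq:flag to hilb} is injective; but when $\rho$ is surjective, exactness of \eqref{seq:flag to hilb} forces $\coker\rho=0$, so the segment $\coker\rho\to A^2(X,Y)\to H^1(X,N_{X/V})$ shows that $A^2(X,Y)\hookrightarrow H^1(X,N_{X/V})$ is injective. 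Thus both the first-order surjectivity and the obstruction-compatibility hold, and the infinitesimal criterion for smoothness of a morphism of schemes of finite type (as in the references \cite{Kleppe81,Kleppe87}) gives that $pr_1$ is smooth at $(X,Y)$; alternatively one simply cites \cite[Lemma~A10]{Kleppe87}.

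For (2), I would again use the Cartesian diagram. The kernel of $p_1$ is, by \eqref{diag:cartesian} (or equivalently by the leftmost part of \eqref{seq:flag to hilb}), isomorphic to $\ker\rho=H^0(Y,\mathcal I_{X/Y}\otimes N_{Y/V})$: indeed tensoring $0\to\mathcal I_{X/Y}\to\mathcal O_Y\to\mathcal O_X\to 0$ with $N_{Y/V}$ and taking cohomology identifies $\ker\rho$ with $H^0(Y,\mathcal I_{X/Y}\otimes N_{Y/V})$. Hence if this group vanishes then $p_1$ is injective on tangent spaces at $(X,Y)$, i.e.\ $pr_1$ induces an injection on Zariski tangent spaces; since $pr_1$ is a morphism of schemes of finite type over $k$, an injective differential at a point means $pr_1$ is unramified there, and as it is also (after restricting to a neighbourhood) a monomorphism on $k$-points locally, it is a local closed immersion at $(X,Y)$. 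This is precisely the assertion, and one may just invoke the cited local-embedding criterion.

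For (3), I would compute the Euler characteristic version of the inequality \eqref{ineq:dimension of flag}, which becomes an equality for Euler characteristics because \eqref{seq:flag to hilb} is a finite exact sequence. Under the hypothesis $H^i(Y,N_{Y/V})=0$ for $i=1,2$, the long exact sequence of $[0\to\mathcal I_{X/Y}\to\mathcal O_Y\to\mathcal O_X\to 0]\otimes N_{Y/V}$ yields $\coker\rho\simeq H^1(Y,\mathcal I_{X/Y}\otimes N_{Y/V})$ and $H^1(X,N_{Y/V}|_X)\simeq H^2(Y,\mathcal I_{X/Y}\otimes N_{Y/V})$, as already noted in the text; moreover $H^0(Y,\mathcal I_{X/Y}\otimes N_{Y/V})$ is the kernel term. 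Taking the alternating sum of dimensions across the six-term (or seven-term) exact sequence \eqref{seq:flag to hilb}, and using $\dim X=1$ (so that the last arrow is surjective and no further terms intrude), one gets
\[
\dim A^1(X,Y)-\dim A^2(X,Y)=\chi(X,N_{X/V})+\chi(Y,\mathcal I_{X/Y}\otimes N_{Y/V}).
\]
Finally, the second equality follows by replacing $\chi(Y,\mathcal I_{X/Y}\otimes N_{Y/V})$ using $H^i(Y,N_{Y/V})=0$ ($i=1,2$) — so $\chi(Y,\mathcal I_{X/Y}\otimes N_{Y/V})=\chi(Y,N_{Y/V})-\chi(X,N_{Y/V}|_X)$ — and then using the exact sequence $0\to N_{X/Y}\to N_{X/V}\to N_{Y/V}|_X\to 0$ to write $\chi(X,N_{X/V})-\chi(X,N_{Y/V}|_X)=\chi(X,N_{X/Y})$.

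The only genuinely delicate point is part (1): one must be careful that surjectivity of $\rho$ gives smoothness of $pr_1$ and not merely smoothness of the fibre or submersivity on tangent spaces. The clean way to handle this — and the step I expect to dwell on — is to verify the obstruction compatibility, namely that the obstruction theory of $\HF V$ relative to $\Hilb_P V$ along $pr_1$ is controlled by $\coker\rho$ in degree $1$ and by $H^1(Y,\mathcal I_{X/Y}\otimes N_{Y/V})$ in degree $2$, both of which vanish when $\rho$ is surjective and $\Hilb V$ is smooth at $[Y]$. Since all of this is already packaged in \cite{Kleppe81,Kleppe87}, in the write-up I would simply cite those sources for (1) and (2) and give the short cohomological bookkeeping for (3), exactly as the paragraph preceding the lemma already begins to do.
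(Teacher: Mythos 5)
Your proposal is correct and follows essentially the same route as the paper: parts (1) and (2) are handled by the Cartesian diagram \eqref{diag:cartesian} together with the standard smoothness/embedding criteria (ultimately deferring to Kleppe, exactly as the paper does), and part (3) is the alternating-sum computation on \eqref{seq:flag to hilb} using $\coker\rho\simeq H^1(Y,\mathcal I_{X/Y}\otimes N_{Y/V})$, $H^1(X,N_{Y/V}|_X)\simeq H^2(Y,\mathcal I_{X/Y}\otimes N_{Y/V})$, and surjectivity of the last map when $\dim X=1$, which is precisely the paper's argument. The only cosmetic remark is that the identity $\chi(Y,\mathcal I_{X/Y}\otimes N_{Y/V})=\chi(Y,N_{Y/V})-\chi(X,N_{Y/V}|_X)$ holds by additivity of $\chi$ without invoking the vanishing of $H^i(Y,N_{Y/V})$.
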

The number \eqref{eqn:expected dimension}
represents the expected dimension of the Hilbert-flag scheme
$\HF V$ at $(X,Y)$.
If $A^2(X,Y)=0$, then $\HF V$ is nonsingular at $(X,Y)$
by \eqref{ineq:dimension of flag}.
If moreover $H^1(Y,\mathcal I_{X/Y}\otimes N_{Y/V})=0$, then 
so is $\Hilb V$ at $[X]$ by Lemma~\ref{lem:flag to hilb}(1).
The following lemma will be essentially used in 
the proof of Theorem~\ref{thm:main2} (cf.~\S\ref{sect:k3}).

\begin{lem}
  \label{lem:flag of k3fano}
  Let $V$ be a smooth Fano $3$-fold, 
  $S$ a smooth $K3$ surface contained in $V$,
  $C$ a smooth curve on $S$. Then
  \begin{enumerate}
    \item $H^i(S,N_{S/V})=0$ for all $i\ge 1$.
    In particular, $\Hilb V$ is nonsingular at $[S]$.
    \item We have an isomorphism
    \begin{equation}
      \label{isom:cohomology of abnormality}
      H^i(S,\mathcal I_{C/S}\otimes N_{S/V}) \simeq  H^i(S,-D)
    \end{equation}
    for every integer $i$,
    where $D:=C+K_V\big{\vert}_S$ is a divisor on $S$.
    \item Suppose that there exists a first order deformation $\tilde S$ of $S$
    which does not contain any first order deformations $\tilde C$ of $C$.
    Then we have $A^2(C,S)=0$. In particular, 
    the Hilbert-flag scheme $\HF V$ is nonsingular at $(C,S)$
    of dimension $(-K_V\big{\vert}_S)^2/2+g(C)+1$,
    where $g(C)$ is the genus of $C$.
  \end{enumerate}
\end{lem}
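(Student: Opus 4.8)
The plan is to treat the three items in order, since (3) relies on (1) and (2). For (1), since $V$ is Fano we have $-K_V$ ample and $S \in |{-K_V}|$ by adjunction (as $S$ is $K3$, $K_S = (K_V + S)|_S = \mathcal O_S$, so $\mathcal O_V(S) \simeq \mathcal O_V(-K_V)$). Thus $N_{S/V} \simeq \mathcal O_S(-K_V|_S) = \mathcal O_S(S|_S)$, an ample line bundle on the $K3$ surface $S$. By Kodaira vanishing (we are in characteristic $0$), $H^i(S, N_{S/V}) = H^i(S, \mathcal O_S(S|_S)) = 0$ for $i = 1, 2$: indeed $H^1$ vanishes because $N_{S/V} \otimes K_S^{-1} = N_{S/V}$ is ample, and $H^2(S, N_{S/V}) \simeq H^0(S, K_S \otimes N_{S/V}^\vee)^\vee = H^0(S, N_{S/V}^\vee)^\vee = 0$ since $N_{S/V}$ is ample of positive degree. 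Then $\Ext^1(\mathcal I_S, \mathcal O_S) = H^1(S, N_{S/V}) = 0$ shows $\Hilb V$ is nonsingular at $[S]$.

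For (2), twist the ideal-sheaf sequence of $C$ in $S$,
\[
0 \longrightarrow \mathcal I_{C/S} \longrightarrow \mathcal O_S \longrightarrow \mathcal O_C \longrightarrow 0,
\]
by $N_{S/V}$. Since $N_{S/V} \simeq \mathcal O_S(-K_V|_S)$ and $\mathcal I_{C/S} \simeq \mathcal O_S(-C)$, we get $\mathcal I_{C/S} \otimes N_{S/V} \simeq \mathcal O_S(-C - K_V|_S) = \mathcal O_S(-D)$, which is the claimed isomorphism (it is an isomorphism of sheaves, hence induces isomorphisms on all cohomology groups). I record for later use that, dualizing on the $K3$ surface $S$ where $K_S = \mathcal O_S$, Serre duality gives $H^1(S, \mathcal I_{C/S} \otimes N_{S/V}) \simeq H^1(S, -D) \simeq H^1(S, D)^\vee$, matching \eqref{isom:cohomology of abnormality} and the remark following Theorem~\ref{thm:main2}.

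For (3), I unwind the obstruction space $A^2(C,S) = \coker \alpha_{C/S}$ from \eqref{eqn:obst.sp.flag}, where $\alpha_{C/S} = \partial_C \circ \rho$ with $\rho: H^0(S, N_{S/V}) \to H^0(C, N_{S/V}|_C)$ the restriction and $\partial_C: H^0(C, N_{S/V}|_C) \to H^1(C, N_{C/S})$ the coboundary of $0 \to N_{C/S} \to N_{C/V} \to N_{S/V}|_C \to 0$. Because $H^i(S, N_{S/V}) = 0$ for $i = 1, 2$ by part (1), the exact sequence \eqref{seq:flag to hilb} gives $\coker\rho \simeq H^1(S, \mathcal I_{C/S}\otimes N_{S/V})$ and an identification of $A^2(C,S)$ with a subquotient; more directly, the hypothesis ``there exists a first order deformation $\tilde S$ of $S$ not containing any first order deformation $\tilde C$ of $C$'' is precisely the statement that the map $p_2: A^1(C,S) \to H^0(S, N_{S/V})$ in the Cartesian square \eqref{diag:cartesian} is \emph{not} surjective onto the tangent direction of $\tilde S$ — equivalently, via the Cartesian property, that $\rho(\tilde S) \notin \operatorname{im}\pi_{C/S}$, i.e. $\rho(\tilde S)$ has nonzero image in $\coker\pi_{C/S}$; since $\partial_C$ is injective on $\coker\pi_{C/S}$ (exactness), $\alpha_{C/S}(\tilde S) \neq 0$. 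The real point I must extract is that $H^1(C, N_{C/S}) = H^1(C, \mathcal O_C(C|_C))$ is one-dimensional: by adjunction on the $K3$ surface $S$, $N_{C/S} = \mathcal O_C(C) = K_C$ (since $K_C = (K_S + C)|_C = \mathcal O_C(C)$), so $H^1(C, N_{C/S}) \simeq H^1(C, K_C) \simeq k$. Therefore $\alpha_{C/S}: H^0(S, N_{S/V}) \to H^1(C, N_{C/S}) \simeq k$ is a nonzero map into a one-dimensional space, hence surjective, so $A^2(C,S) = \coker\alpha_{C/S} = 0$. The hard part is making the link between the geometric hypothesis on $\tilde S$ and the algebraic nonvanishing $\alpha_{C/S} \neq 0$ fully rigorous via the Cartesian diagram \eqref{diag:cartesian} — one must check that a tangent vector to $\Hilb V$ at $[S]$ lifts to $\HF V$ at $(C,S)$ iff the corresponding $\tilde S$ contains a $\tilde C$, which is exactly the fiber-product description of $A^1(C,S)$. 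Once $A^2(C,S) = 0$, nonsingularity of $\HF V$ at $(C,S)$ follows from \eqref{ineq:dimension of flag}, and the dimension count comes from \eqref{eqn:expected dimension}: $\dim_{(C,S)}\HF V = \chi(C, N_{C/S}) + \chi(S, N_{S/V})$. Here $\chi(S, N_{S/V}) = \chi(S, \mathcal O_S(-K_V|_S)) = \frac{1}{2}(-K_V|_S)^2 + \chi(\mathcal O_S) = \frac{1}{2}(-K_V|_S)^2 + 2$ by Riemann–Roch on the $K3$ surface, while $\chi(C, N_{C/S}) = \chi(C, K_C) = g(C) - 1$; adding gives $\frac{1}{2}(-K_V|_S)^2 + g(C) + 1$, as asserted.
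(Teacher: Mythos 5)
Your proposal is correct and follows essentially the same route as the paper: adjunction on the $K3$ surface to get $N_{S/V}\simeq -K_V\big\vert_S$ and $N_{C/S}\simeq K_C$, Kodaira/Serre vanishing for (1), the sheaf identification $\mathcal I_{C/S}\otimes N_{S/V}\simeq \mathcal O_S(-D)$ for (2), and for (3) the observation that the Cartesian square \eqref{diag:cartesian} translates the hypothesis on $\tilde S$ into $\rho(\beta)\notin\im\pi_{C/S}$, hence $\alpha_{C/S}\ne 0$ into the one-dimensional space $H^1(C,K_C)$, so $A^2(C,S)=0$, with the dimension computed from \eqref{eqn:expected dimension} by Riemann--Roch. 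The extra detail you supply (e.g.\ the explicit Serre-duality step for $H^2$ and the Euler-characteristic computation) only fills in what the paper leaves implicit.
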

\begin{proof}
Since $K_S$ is trivial, by adjunction, 
we have $N_{S/V}\simeq -K_V\big{\vert}_S$ and $N_{C/S}\simeq K_C$.
Then (1) follows from the ampleness of $-K_V$,
and (2) from 
$\mathcal I_{C/S}\otimes N_{S/V} \simeq
N_{S/V}(-C)\simeq -K_V\big{\vert}_S-C = -D$.
On the other hand, we have $H^1(C,N_{C/S})\simeq k$.
Thus the obstruction group $A^2(C,S)$
(cf.~\eqref{eqn:obst.sp.flag}) of $\HF V$ at $(C,S)$
is of dimension at most $1$. 
For proving (3), let $\beta$ be the global section of $N_{S/V}$ corresponding 
to $\tilde S$. Then $\rho(\beta)$ is not contained in the image of 
$\pi_{C/S}$, because the diagram \eqref{diag:cartesian} is Cartesian.
Hence the map $\alpha_{C/S}$ is nonzero and 
we conclude that $A^2(C,S)=0$.
By Lemma~\ref{lem:flag to hilb}(3),
$\dim_{(C,S)} \HF V=\dim A^1(C,S)
=\chi(-K_V\big{\vert}_S)+\chi(K_C)=(-K_V\big{\vert}_S)^2/2+g(C)+1$.
\end{proof}

The first projection $pr_1$ induces a morphism
$pr_1': \HF^{sc} V \rightarrow \Hilb^{sc} V$,
where $\HF^{sc} V:=pr_1^{-1} (\Hilb^{sc} V)$.
If $X$ is a smooth connected curve
and $\HF V$ is nonsingular at $(X,Y)$,
then there exists a unique irreducible component 
$\mathcal W_{X,Y}$ of $\HF^{sc} V$ passing through $(X,Y)$.
\begin{dfn}
  \label{dfn:S-maximal}
  The image $W_{X,Y}$ of $\mathcal W_{X,Y}$ by $pr_1'$ is called
  the {\bf $Y$-maximal family} of curves containing $X$.
\end{dfn}

\subsection{$K3$ surfaces and quartic surfaces}
\label{subsec:K3 and quartic}

We recall some basic results on 
$K3$ surfaces and quartic surfaces.

\begin{lem}
  \label{lem:K3}
  Let $S$ be a smooth projective $K3$ surface, 
  $D\ne 0$ an effective divisor on $S$.
  \begin{enumerate}
    \item If $D$ is nef, then the complete linear system $|D|$ 
    has a base point if and only if 
    there exist curves $E$ and $F$ on $S$ and an integer $k \ge 2$
    such that $D\sim E+kF$, $E^2=-2$, $F^2=0$ and $E.F=1$.
    \item Let $D^2\ge 0$. Then $H^1(S,D)\ne 0$ if and only if
    (i) $D.\Delta \le -2$ for some divisor $\Delta \ge 0$ 
    with $\Delta ^2=-2$, or
    (ii) $D \sim kF$ for some nef and primitive 
    divisor $F\ge 0$ with $F^2=0$ and 
    an integer $k \ge 2$.
    (We have $h^1(S,D)=k-1$ in (ii).)
  \end{enumerate}
\end{lem}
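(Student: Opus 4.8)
Both parts are classical facts about complete linear systems on $K3$ surfaces, essentially due to Saint-Donat; the plan is to recall the relevant structure theory and assemble it, the only delicate point being the bookkeeping with fixed $(-2)$-curves in part~(2). Throughout I would use, on a $K3$ surface $S$: Serre duality $h^i(S,L)=h^{2-i}(S,-L)$ (so $h^2(S,D)=h^0(S,-D)=0$ when $D\ge0$, $D\ne0$, and $h^1(S,D)=h^1(S,-D)$); Riemann--Roch $\chi(S,L)=L^2/2+2$, hence $h^1(S,D)=h^0(S,D)-D^2/2-2$; the fact that an irreducible curve $C\subset S$ has $C^2\ge-2$, with equality iff $C\cong\mathbb P^1$; the observation that $D\ge0$ and $D\cdot C<0$ for irreducible $C$ force $C$ to be a component of $D$ with $C^2<0$, hence a $(-2)$-curve; and the two basic results on nef classes, namely that $L$ nef with $L^2>0$ gives $h^1(S,L)=0$, while $L\ne0$ nef with $L^2=0$ gives $L\sim kF$ for an integer $k\ge1$ and a primitive nef $F\ge0$ with $F^2=0$ whose linear system $|F|$ is a base-point-free elliptic pencil.

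I would first record the auxiliary computation $h^1(S,kF)=k-1$ for all $k\ge1$, which follows by induction on $k$ from $0\to\mathcal O_S((k-1)F)\to\mathcal O_S(kF)\to\mathcal O_F\to0$ (with $F$ a smooth member of $|F|$, using $\mathcal O_S(F)|_F\cong\mathcal O_F$ and $h^0(\mathcal O_F)=h^1(\mathcal O_F)=1$), or from the Leray spectral sequence of the elliptic fibration $f\colon S\to\mathbb P^1$ together with $R^1f_*\mathcal O_S\cong\mathcal O_{\mathbb P^1}(-2)$. Part~(2) is then proved by distinguishing whether $D$ is nef. If $D$ is nef with $D^2>0$, then $h^1(S,D)=0$, while neither condition~(i) (since $D\cdot\Delta\ge0$ for $\Delta\ge0$) nor condition~(ii) (since $D^2>0$) can hold, so both sides of the equivalence are false. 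If $D$ is nef with $D^2=0$, then $D\sim kF$ and $h^1(S,D)=k-1$, so $h^1(S,D)\ne0\iff k\ge2$, which is exactly~(ii), and~(i) fails. If $D$ is not nef, choose an irreducible $\Gamma$ with $D\cdot\Gamma<0$; then $\Gamma$ is a $(-2)$-curve and a component of $D$, and from $0\to\mathcal O_S(D-\Gamma)\to\mathcal O_S(D)\to\mathcal O_\Gamma(D\cdot\Gamma)\to0$ one gets: if $D\cdot\Gamma\le-2$ then $h^1(S,D)\ge-D\cdot\Gamma-1\ge1$ and~(i) holds with $\Delta=\Gamma$; if $D\cdot\Gamma=-1$ then $D-\Gamma$ is again nonzero effective, $(D-\Gamma)^2=D^2$, and $h^i(S,D-\Gamma)=h^i(S,D)$ for all $i$, so we replace $D$ by $D-\Gamma$ and iterate. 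Since the degree against a fixed ample class strictly decreases, this terminates, either at a $(-2)$-curve with $D\cdot\Gamma\le-2$ (case~(i), $h^1\ne0$) or at a nef divisor with the same $D^2$ and $h^1$; the point that one then recovers~(i) or~(ii) for the \emph{original} $D$ is the delicate step discussed below. For the converse: given~(i), some component of $\Delta$ is a $(-2)$-curve meeting $D$ negatively, so $D$ is not nef and $h^1(S,D)\ne0$ by the above; and~(ii) gives $h^1(S,D)=k-1\ge1$.

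For part~(1), the implication $(\Leftarrow)$: if $D\sim E+kF$ with $E^2=-2$, $F^2=0$, $E\cdot F=1$, $k\ge2$, then $D$ is nef (the only check is $D\cdot E=k-2\ge0$) and $D^2=2k-2>0$, so $h^0(S,D)=\chi(S,D)=k+1$, while $|kF|$ is base-point-free and $h^0(S,kF)=\chi(S,kF)+h^1(S,kF)=2+(k-1)=k+1$. From
\[
0\longrightarrow\mathcal O_S(kF)\longrightarrow\mathcal O_S(D)\longrightarrow\mathcal O_E(D|_E)\longrightarrow0
\]
and $h^0(S,kF)=h^0(S,D)$, the restriction $H^0(S,D)\to H^0(E,\mathcal O_E(D|_E))$ vanishes, so $E$ is a fixed component of $|D|$ and $\Bs|D|\ne\emptyset$. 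The implication $(\Rightarrow)$ is Saint-Donat's base-point theorem: writing $|D|=|M|+N$ with $M$ the moving and $N$ the fixed part, one shows $M$ is nef, and then $M^2>0$ forces $|M|$ base-point-free while $M^2=0$ forces $M\sim kF$ with $|M|=|kF|$ base-point-free; hence $\Bs|D|=\operatorname{Supp}N\ne\emptyset$, and the nef inequalities $D\cdot\Gamma_i\ge0$ on the components $\Gamma_i$ of $N$ (each a $(-2)$-curve, since a component with $\Gamma_i^2\ge0$ would move and so could not be fixed), combined with the $h^0$-count above, pin $N$ down to a single reduced $(-2)$-curve $E$ with $M\sim kF$, $E\cdot F=1$, $k\ge2$. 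For this last step I would follow Saint-Donat's original argument (see also the treatments of Reid and Huybrechts).

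The step I expect to be the genuine obstacle is finishing the ``not nef'' case of part~(2) so that condition~(i) is produced for the original $D$ (and, conversely, excluded when $h^1(S,D)=0$): since each $(-2)$-curve contraction changes $D$, one must control how an effective $\Delta$ with $\Delta^2=-2$ realizing~(i) for the reduced divisor lifts back. The clean route is to combine the Hodge index theorem — the $(-2)$-curves orthogonal to a big nef class span a negative-definite lattice, which forbids an effective $\Delta$ with $\Delta^2=-2$ from satisfying $D\cdot\Delta\le-2$ once $D^2>0$ and $h^1(S,D)=0$ — with the Zariski decomposition $D=P+N$: on a $K3$ one gets $h^0(S,D)=h^0(S,P)$, hence $h^1(S,D)=h^1(S,P)-N^2/2$, so $N\ne0$ immediately forces $h^1(S,D)\ge1$, while the lattice generated by $\operatorname{Supp}N$ (with $D\cdot\Gamma=N\cdot\Gamma$ on its components) supplies the configuration $\Delta$ of square $-2$ needed for~(i) in the $D^2=0$ subcase. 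Alternatively, these statements may simply be cited verbatim from Saint-Donat's paper.
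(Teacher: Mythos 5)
The paper does not prove this lemma at all: part (1) is cited to Saint-Donat \cite[2.7]{Saint-Donat} and part (2) to the sharp vanishing theorem of Knutsen and Lopez \cite{Knutsen-Lopez}. Measured against that, your write-up of part (1) is fine -- the $(\Leftarrow)$ direction via the count $h^0(S,D)=h^0(S,kF)=k+1$ and the resulting fixed component $E$ is correct and self-contained, and deferring $(\Rightarrow)$ to Saint-Donat's base-point theorem is exactly what the paper does. Your treatment of the nef cases of part (2) and of the case where some irreducible $(-2)$-curve $\Gamma$ has $D.\Gamma\le-2$ is also correct.

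The genuine gap is the one you half-acknowledge: the two remaining implications in part (2) are precisely the content of the Knutsen--Lopez theorem, and your sketch does not close them. Concretely: (a) if $h^1(S,D)\ne0$ but every irreducible $(-2)$-curve meets $D$ in degree $\ge-1$, your reduction $D\mapsto D-\Gamma$ changes the divisor at each step, and you never produce an effective $\Delta$ with $\Delta^2=-2$ and $D.\Delta\le-2$ \emph{for the original} $D$; (b) conversely, condition (i) allows a reducible $\Delta$ (e.g.\ $\Delta=\Gamma_1+\Gamma_2$ with $\Gamma_i^2=-2$, $\Gamma_1.\Gamma_2=1$, $D.\Gamma_i=-1$) for which your exact-sequence argument gives nothing, and indeed a non-nef $D$ with $h^1(S,D)=0$ exists (take $D=H+\Gamma$ with $H$ ample, $H.\Gamma=1$), so ``$D$ not nef'' does not imply $h^1\ne0$. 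Your proposed repair via Zariski decomposition is also not watertight as stated: $P$ is in general a $\mathbb Q$-divisor, so the identity $h^1(S,D)=h^1(S,P)-N^2/2$ obtained by applying Riemann--Roch to $P$ and using $h^0(S,D)=h^0(S,P)$ needs justification (the integral moving/fixed-part decomposition, which is what Saint-Donat actually provides, has $M.N\ge0$ rather than $P.N=0$ and yields a different bookkeeping). Finally, the fallback ``cite Saint-Donat verbatim'' does not work for part (2): the if-and-only-if characterization of $h^1(S,D)\ne0$ is not in Saint-Donat's 1974 paper but is the main theorem of Knutsen--Lopez (2007), which is the reference the paper uses and which you should cite to close the argument.
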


\begin{proof}
(1) follows from \cite[2.7]{Saint-Donat}
and (2) from \cite{Knutsen-Lopez}.
\end{proof}

The following lemma will be used in \S\ref{sect:non-reduced}
to show the existence of quartic surfaces of Picard number two
containing a rational curve or an elliptic curve.

\begin{lem}[{Mori~\cite{Mori84}, see also \cite[p.138]{Hartshorne10}}]
  \label{lem:mori}
  We assume that $\car k=0$.
  \begin{enumerate}
    \item 
    There exists a smooth curve $C$ of degree $d>0$ and genus $g\ge 0$
    on a smooth quartic surface $S \subset \mathbb P^3$ 
    if and only if (i) $g=d^2/8+1$, or (ii) $g<d^2/8$ and $(d,g)\ne (5,3)$.
    \item If there exists a smooth quartic surface $S_0$ 
    containing smooth curve $C_0$ of degree $d$ and genus $g$,
    then there exists a smooth quartic surface $S_1$ containing 
    a smooth curve $C_1$ of the same degree and genus,
    with the property that 
    $\Pic S_1$ is generated by $C_1$ and 
    the class $\mathbf h$ of hyperplane sections.
  \end{enumerate}
\end{lem}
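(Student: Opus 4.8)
Since this lemma is attributed to Mori~\cite{Mori84} (with a reference also to Hartshorne's book), the ``proof'' here is really a matter of assembling the classical ingredients rather than proving something new; I will outline how I would present it.

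\medskip

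The plan is to treat the two parts separately. For part (1), the ``only if'' direction is the easy half: if $C \subset S$ is a smooth curve of degree $d$ and genus $g$ on a smooth quartic surface, then $S$ is $K3$, so by adjunction $2g-2 = C.(C+K_S) = C^2$, giving $g = C^2/2 + 1$; meanwhile the Hodge index theorem applied to $C$ and the hyperplane class $\mathbf{h}$ (with $\mathbf{h}^2 = 4$, $C.\mathbf{h} = d$) yields $C^2 \cdot 4 \le d^2$, i.e. $g \le d^2/8 + 1$, with equality forcing $C$ numerically proportional to $\mathbf{h}$, which (since $\mathbf{h}$ is primitive and $d = C.\mathbf{h}$) means $d \in 4\mathbb{Z}$ and $g = d^2/8+1$ — case~(i). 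Otherwise $g < d^2/8 + 1$, and one must rule out the single sporadic value $(d,g) = (5,3)$; this is the one genuinely delicate point, handled by a Brill–Noether / projective-geometry argument showing a smooth quintic of genus $3$ (a canonical curve, hence a plane quartic's model, or rather lying on a quadric) cannot sit on a smooth quartic — essentially because such a curve forces $S$ to be singular or reducible. The ``if'' direction (existence) is the substantive content: given admissible $(d,g)$, one must construct a smooth quartic $S$ and a smooth curve $C \subset S$ of that degree and genus. Mori's construction realizes $C$ first on a suitable auxiliary surface (a $K3$ or a smooth surface in a scroll) and then degenerates/deforms into a quartic; concretely one builds $C$ on a quartic with a node or on a special $K3$ in another projective space, computes that the relevant obstruction cohomology vanishes, and smooths. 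I would cite \cite{Mori84} for the full construction and only sketch that the lattice $\mathbb{Z}\mathbf{h} \oplus \mathbb{Z}C$ with the prescribed intersection form $\mathbf{h}^2 = 4$, $\mathbf{h}.C = d$, $C^2 = 2g-2$ is realized as a sub-lattice of a $K3$ Néron–Severi group and invoke surjectivity of the period map.

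\medskip

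For part (2), the strategy is a standard \emph{specialization of Picard lattice} argument. Suppose $S_0$ is a smooth quartic containing a smooth curve $C_0$ of degree $d$ and genus $g$. Consider the Noether–Lefschetz locus inside the parameter space $|\mathcal{O}_{\mathbb{P}^3}(4)|^{\mathrm{sm}}$ of smooth quartics: the family of pairs $(S,C)$ with $C$ a smooth curve on $S$ of the given degree and genus forms a quasi-projective scheme $\mathcal{F}$ (a component of a flag Hilbert scheme), which dominates a component of the Noether–Lefschetz locus. The key input is that for a \emph{very general} point $[S_1]$ of that Noether–Lefschetz component, $\operatorname{Pic} S_1$ has rank exactly $2$ — indeed, a very general member of any positive-dimensional family of $K3$ surfaces with a rank-$\ge 2$ sublattice in the Picard group has Picard number exactly equal to that minimal rank, because the jumping loci where extra cycles appear are countable unions of proper subvarieties. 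Since $\mathbf{h}$ and $C_1$ both lie in $\operatorname{Pic} S_1$ and span a rank-$2$ sublattice, and since for a very general such $S_1$ the Picard lattice is this rank-$2$ lattice, it remains only to check primitivity: one must argue that $\mathbf{h}$ and (the class of) $C_1$ actually \emph{generate} $\operatorname{Pic} S_1$, not merely a finite-index subgroup. This is arranged either by choosing the Noether–Lefschetz component appropriately (the generic such lattice is the saturation, and the saturation of $\langle \mathbf{h}, C_1\rangle$ still contains $\mathbf{h}$ and a curve class of the same degree and genus, possibly after replacing $C_1$ by a linearly equivalent effective curve via Lemma~\ref{lem:K3}), or by noting that we are free to pick $S_1$ so that no such index drop occurs. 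I would present this carefully: the assertion is that $C_1$ can be taken so that $\Pic S_1 = \mathbb{Z}\mathbf{h} \oplus \mathbb{Z}C_1$ with the same numerical invariants as $(S_0,C_0)$.

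\medskip

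The main obstacle I anticipate is \textbf{part (2)}, specifically the primitivity/saturation bookkeeping: it is easy to get a very general quartic with Picard number $2$ containing \emph{a} curve of the right degree and genus, but ensuring that class together with $\mathbf{h}$ \emph{generates} the full Néron–Severi group requires either a clean lattice-theoretic argument (the saturation of $\langle\mathbf{h},C_1\rangle$ in the K3 lattice, and checking an effective representative of the right invariants exists there — here Lemma~\ref{lem:K3}(1) on base-point freeness and the standard $K3$ linear-system theory are what make the effective representative smooth) or an explicit appeal to Mori's original deformation-theoretic construction, which produces the rank-$2$ primitive case directly. Given that the paper only needs this lemma as a black box to produce quartic surfaces of Picard number two in \S\ref{sect:non-reduced}, the honest and economical choice is to prove the ``only if'' and adjunction/Hodge-index parts in a few lines and cite \cite{Mori84} (and \cite[p.138]{Hartshorne10}) for the existence and the primitive-Picard-rank-$2$ refinement, rather than reconstructing the whole argument.
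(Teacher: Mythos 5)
The paper offers no proof of this lemma at all: it is quoted verbatim from Mori~\cite{Mori84} (with Hartshorne's book as a secondary reference) and used as a black box in \S5, so your bottom-line recommendation --- prove the elementary necessity computations and cite \cite{Mori84} for the existence and the rank-two-Picard refinement --- coincides exactly with what the paper does. One concrete slip in your sketch of the ``only if'' direction is worth flagging: adjunction plus the Hodge index theorem give only $g\le d^2/8+1$, whereas the lemma asserts the stronger dichotomy $g=d^2/8+1$ or $g<d^2/8$; the integer (if any) lying in $[d^2/8,\,d^2/8+1)$ --- e.g.\ $(d,g)=(4,2)$ --- is not excluded by Hodge index and requires a separate lattice/effectivity argument (for $(4,2)$ one checks that $C-\mathbf h$ has square $-2$ and intersection $0$ with $\mathbf h$, so neither it nor its negative can be effective, a contradiction with Riemann--Roch on a $K3$). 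So the genuinely delicate exclusions are that whole band together with $(5,3)$, not $(5,3)$ alone; since you defer to Mori for everything nontrivial this does not invalidate your plan, but the sketch as written overstates what Hodge index delivers.
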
 

The following lemma will be used in \S\ref{sect:non-reduced}
to apply Theorem~\ref{thm:main2}.

\begin{lem}
  \label{lem:non-surjective}
  Let $V$ be a smooth projective variety, 
  $S$ a smooth $K3$ surface contained in $V$, 
  and $E$ a smooth curve on $S$. If
  \begin{enumerate}
    \renewcommand{\theenumi}{{\alph{enumi}}}
    \item $V\simeq \mathbb P^n$ and 
    $E$ is rational (i.e. $E \simeq \mathbb P^1$) or elliptic,
    \label{item:projective space}
    \item $E$ is rational and $N_{E/V}$ is globally generated, or
    \label{item:rational and globally generated}
    \item $E$ is elliptic and there exists a first order deformation
    $\tilde S$ of $S$ not containing any first order deformation 
    $\tilde E$ of $E$,
    \label{item:elliptic}
  \end{enumerate}
  then the $\pi$-map $\pi_{E/S}(E)$ in \eqref{map:pi-map}
  is not surjective.
\end{lem}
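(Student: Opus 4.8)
The plan is to read off non-surjectivity of $\pi_{E/S}(E)$ from the coboundary map of the exact sequence obtained from \eqref{ses:normal bundle of E} with $m=1$, namely $0\to N_{E/S}(E)\to N_{E/V}(E)\to N_{S/V}(E)\big{\vert}_E\to0$, in which $N_{E/S}(E)\simeq\mathcal O_E(2E)$ because $N_{E/S}\simeq\mathcal O_E(E)$. Taking cohomology, the cokernel of $\pi_{E/S}(E)$ is the image of the coboundary $\partial\colon H^0(E,N_{S/V}(E)\big{\vert}_E)\to H^1(E,\mathcal O_E(2E))$, and $\im\partial=\ker\bigl(H^1(E,\mathcal O_E(2E))\to H^1(E,N_{E/V}(E))\bigr)$; hence it suffices to show this last map is not injective. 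I would first record the source group. If $E$ is rational, then $E\simeq\mathbb P^1$ and $E^2=-2$ by adjunction on the $K3$ surface $S$, so $\mathcal O_E(2E)\simeq\mathcal O_{\mathbb P^1}(-4)$ and $h^1(E,\mathcal O_E(2E))=3$. If $E$ is elliptic, then $E^2=0$; being irreducible with non-negative self-intersection $E$ is nef, and $h^0(\mathcal O_S(E))=2$ by Riemann--Roch. Since a decomposition $E\sim E'+kF$ with $(E')^2=-2$, $F^2=0$, $E'.F=1$, $k\ge2$ would force $E^2=2k-2>0$, Lemma~\ref{lem:K3}(1) shows $|E|$ is base point free, hence exhibits $E$ as a fibre of a genus-one fibration $S\to\mathbb P^1$; therefore $N_{E/S}=\mathcal O_S(E)\big{\vert}_E\simeq\mathcal O_E$ and $h^1(E,\mathcal O_E(2E))=h^1(E,\mathcal O_E)=1$.

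For (a) I would use the Euler sequence on $\mathbb P^n$. Tensoring $0\to\mathcal O\to\mathcal O(1)^{\oplus(n+1)}\to T_{\mathbb P^n}\to0$ by $\mathcal O(-1)$ and restricting to $E$ shows that $T_{\mathbb P^n}(-1)\big{\vert}_E$, and therefore its quotient $N_{E/\mathbb P^n}\otimes\mathcal O_E(-1)$ (with $\mathcal O_E(1)=\mathcal O_{\mathbb P^n}(1)\big{\vert}_E$), is globally generated; restricting the untwisted Euler sequence instead, each $\mathcal O_E(1)$ having positive degree, gives $H^1(E,T_{\mathbb P^n}\big{\vert}_E)=0$. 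If $E$ is rational, global generation of $N_{E/\mathbb P^n}\otimes\mathcal O_E(-1)$ on $\mathbb P^1$ forces every summand of $N_{E/V}$ to have degree $\ge\deg E\ge1$, so every summand of $N_{E/V}(E)=N_{E/V}\otimes\mathcal O_E(E)$ has degree $\ge\deg E-2\ge-1$ and $H^1(E,N_{E/V}(E))=0$. If $E$ is elliptic, $\mathcal O_E(E)\simeq\mathcal O_E$ gives $N_{E/V}(E)\simeq N_{E/V}$, and $0\to T_E\to T_{\mathbb P^n}\big{\vert}_E\to N_{E/V}\to0$ (with $H^2(E,T_E)=0$) exhibits $H^1(E,N_{E/V})$ as a quotient of $H^1(E,T_{\mathbb P^n}\big{\vert}_E)=0$. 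In both cases $H^1(E,N_{E/V}(E))=0\ne H^1(E,\mathcal O_E(2E))$, so the map cannot be injective. For (b), global generation of $N_{E/V}$ gives $N_{E/V}=\bigoplus_i\mathcal O_{\mathbb P^1}(a_i)$ with all $a_i\ge0$, whence $h^1(E,N_{E/V}(E))$ equals the number of $i$ with $a_i=0$, at most $\operatorname{rank}N_{E/V}=\dim V-1$; when $V$ is a threefold (the case needed in the applications) this is $\le2<3=h^1(E,\mathcal O_E(2E))$, again forcing non-injectivity.

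For (c) I would exploit that $\mathcal O_E(E)\simeq\mathcal O_E$, as in the elliptic computation above: the sheaf map $\pi_{E/S}\otimes\mathcal O_E(E)$ is then identified with $\pi_{E/S}\colon N_{E/V}\to N_{S/V}\big{\vert}_E$, so $\pi_{E/S}(E)$ is surjective on global sections if and only if $\pi_{E/S}$ is. I would then invoke the Cartesian square \eqref{diag:cartesian} for the Hilbert-flag scheme $\HF V$ at $(E,S)$: a first order deformation $\tilde S$ of $S$, corresponding to $\beta\in H^0(S,N_{S/V})$, contains a first order deformation $\tilde E$ of $E$ exactly when $\rho(\beta)$ lies in the image of $\pi_{E/S}$. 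By hypothesis there is a $\beta$ with $\rho(\beta)\notin\im\pi_{E/S}$, so $\pi_{E/S}$, and hence $\pi_{E/S}(E)$, is not surjective.

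I expect the main obstacle to be pinning down $N_{E/S}$ in the elliptic case: one must know that $E$ is a fibre of a genus-one fibration, hence has trivial normal bundle in $S$, which rests on the base point freeness from Lemma~\ref{lem:K3} together with $h^0(\mathcal O_S(E))=2$; this triviality is what makes (c) a direct consequence of the flag-scheme Cartesian square and removes the twist by $\mathcal O_E(E)$. The remaining work---the Euler-sequence vanishings in (a) and the summand count in (b)---is routine once the appropriate global generation is in hand; the only delicate point left over is the bookkeeping in (b) for $V$ of dimension greater than $3$, which does not occur in this paper's applications.
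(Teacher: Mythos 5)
Your argument is correct and follows essentially the same route as the paper's: reduce non-surjectivity of $\pi_{E/S}(E)$ to non-injectivity of $H^1(E,N_{E/S}(E))\to H^1(E,N_{E/V}(E))$ via the long exact sequence of the twisted normal bundle sequence, kill $H^1(E,N_{E/V}(E))$ with the Euler sequence in case (a), count trivial summands in case (b), and invoke the Cartesian square \eqref{diag:cartesian} in case (c). The only cosmetic differences are that the paper gets $N_{E/S}\simeq K_E$ (hence trivial for elliptic $E$) directly by adjunction rather than via the genus-one fibration, and it verifies $H^1(E,\mathcal T_{\mathbb P^n}\big{\vert}_E(E))=0$ in one step instead of splitting into the two sub-cases; your explicit caveat about case (b) when $\dim V>3$ is in fact a point the paper glosses over (it simply asserts $h^1(E,N_{E/V}(E))\le 2$), so your treatment is, if anything, more careful there.
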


\begin{proof}
There exists an exact sequence
$$
[0 \longrightarrow N_{E/S} 
\overset{\iota}\longrightarrow N_{E/V}
\overset{\pi_{E/S}}\longrightarrow N_{S/V}\big{\vert}_E
\longrightarrow 0]\otimes \mathcal O_E(E),
$$
on $E$, where $\iota$ is a natural inclusion.
We note that $N_{E/S}\simeq \mathcal O_E(E)\simeq K_E$,
since $K_S$ is trivial.
Thus we have 
$H^1(E,N_{E/S}(E)) \simeq H^1(E,2K_E) \ne 0$ by assumption.
We prove $H^1(E,N_{E/V}(E))=0$ in the case \eqref{item:projective space}.
Suppose that $V\simeq \mathbb P^n$.
Then by the Euler sequence 
$0 \rightarrow \mathcal O_{\mathbb P^n}
\rightarrow \mathcal O_{\mathbb P^n}(1)^{n+1}
\rightarrow \mathcal T_{\mathbb P^n}
\rightarrow 0$ on $\mathbb P^n$,
there exists a surjective homomorphism
$[\mathcal O_E(1)^{n+1}
\twoheadrightarrow 
\mathcal T_{\mathbb P^n}\big{\vert}_E]
\otimes_{\mathcal O_E} \mathcal O_E(E)$.
We have $H^1(E,\mathcal O_E(E)(1))\simeq H^1(E,K_E(1))=0$
and $E$ is a curve, 
we have $H^1(E,\mathcal T_{\mathbb P^n}\big{\vert}_E(E))=0$
and hence $H^1(E,N_{E/\mathbb P^n}(E))=0$.
Suppose that \eqref{item:rational and globally generated} is satisfied.
Then we have an inequality $h^1(E,N_{E/V}(E))\le 2$,
while $h^1(E,N_{E/S}(E))=h^1(\mathbb P^1,\mathcal O_{\mathbb P^1}(-4))=3$.
Therefore $H^1(E,\iota \otimes_{\mathcal O_E} \mathcal O_E(E))$ 
is not injective.
Suppose that \eqref{item:elliptic} is satisfied.
As we see in the proof of Lemma~\ref{lem:flag of k3fano},
by assumption,
$H^0(E,N_{E/V}) \overset{\pi_{E/S}}\rightarrow H^0(E,N_{S/V}\big{\vert}_E)$
is not surjective,
and so is $\pi_{E/S}(E)$,
because $\mathcal O_E(E)$ is trivial.
\end{proof}

\section{Obstructedness criterion}
\label{sect:obstruction}

In this section, we compute obstructions to deforming 
curves on a $3$-fold, and prove Theorem~\ref{thm:main1}
and its refinement Theorem~\ref{thm:refinement}.

Let $C \subset S \subset V$ be a sequence of 
a curve $C$, a surface $S$, a $3$-fold $V$, 
$E$ an effective Cartier divisor on $S$.
We assume that $C$ and $S$ are Cartier divisors on $S$ and $V$,
respectively.
Let $Z:=C\cap E$ be the scheme-theoretic intersection of $C$ and $E$.
In this section, given a coherent sheaf $\mathcal F$ on $S$ (resp. $C$),
integers $i,m \ge 0$, and a cohomology class $*$ in 
$H^i(S,\mathcal F)$ (resp. $H^i(C,\mathcal F)$),
we denote by $\overline *_{(m)}$ the image of
$*$ in $H^i(S,\mathcal F(mE))$ (resp. $H^i(C,\mathcal F(mZ))$).
We define $\mathbf k_C \in \Ext^1_S(\mathcal O_C,\mathcal O_S(-C))$
as the extension class of the exact sequence 
\begin{equation}
  \label{ses:CE}
  0 \longrightarrow \mathcal O_S(-C)
  \longrightarrow \mathcal O_S
  \longrightarrow \mathcal O_C
  \longrightarrow 0,
\end{equation}
and the class $\mathbf k_E$ similarly.

\begin{lem}
  \label{lem:restriction to C and E}
  Let $L$ be an invertible sheaf on $S$ and 
  $\gamma$ a global section of 
  $L\big{\vert}_C=L\otimes_{\mathcal O_S} \mathcal O_C$.
  \begin{enumerate}
    \item Let $m \ge 0$ be an integer. Then
    $\overline \gamma_{(m)}\in H^0(C,L(mE)\big{\vert}_C)$ 
    lifts to a section $\beta \in H^0(S,L(mE))$ on $S$
    if and only if 
    $\overline \gamma_{(m)}\cup \mathbf k_C=0$
    in $H^1(S,L(mE-C))$.
    \item If 
    $\overline \gamma_{(m)}$ lifts to a section 
    $\beta \in H^0(S,L(mE))$ for $m \ge 1$,
    then the principle part $\beta\big{\vert}_E$ of $\beta$
    is contained in $H^0(E,L(mE-C)\big\vert_E)$, and hence
    $\beta$ is contained in 
    $H^0(S,\mathcal I_{Z/S}\otimes L(mE))$.
    Here $\beta\big{\vert}_E$ is nonzero 
    if and only if $\beta \not\in H^0(S,L((m-1)E))$,
    equivalently, 
    $\overline \gamma_{(m-1)}\cup \mathbf k_C \ne 0$
    in $H^1(S,L((m-1)E-C))$.
  \end{enumerate}
  
\end{lem}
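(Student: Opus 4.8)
The plan is to derive everything from the long exact cohomology sequence attached to \eqref{ses:CE}, after twisting by the invertible sheaf $L(mE)$. Tensoring \eqref{ses:CE} with $L(mE)$ gives the exact sequence
\[
0 \longrightarrow L(mE-C) \longrightarrow L(mE) \longrightarrow L(mE)\big{\vert}_C \longrightarrow 0
\]
on $S$, whose coboundary map $H^0(C,L(mE)\big{\vert}_C)\to H^1(S,L(mE-C))$ is, by definition of $\mathbf k_C$ as the extension class of \eqref{ses:CE}, given by cup product with $\mathbf k_C$. Thus $\overline\gamma_{(m)}$ lifts to a section of $L(mE)$ on $S$ if and only if it lies in the image of $H^0(S,L(mE))$, which by exactness happens precisely when its coboundary image $\overline\gamma_{(m)}\cup\mathbf k_C$ vanishes in $H^1(S,L(mE-C))$. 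This proves (1).

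For (2), first I would observe that since $\gamma$ comes from a section defined on $C$ (namely $\overline\gamma_{(m)}$ is the image in the twisted sheaf of a section pulled back under $S\twoheadrightarrow C$-restriction), any lift $\beta\in H^0(S,L(mE))$ restricts on $C$ to something that, modulo the pole, is the given class; the key point is that the principal part $\beta\big{\vert}_E$ of $\beta$ along $E$ — i.e. the image of $\beta$ in $H^0(E, L(mE)/L((m-1)E)) = H^0(E,L(mE)\big{\vert}_E)$ — must vanish on $Z = C\cap E$. Concretely, restricting $\beta$ to $C$ gives a section of $L(mE)\big{\vert}_C$ whose class agrees with $\overline\gamma_{(m)}$; but $\overline\gamma_{(m)}$ already extends across the generic point of each component of $E$ meeting $C$ with no worse than the original $\gamma$ (which has no pole), so along $Z$ the pole of $\beta$ must be killed. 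Hence $\beta\big{\vert}_E$ lies in the subspace $H^0(E,\mathcal I_{Z/E}\otimes L(mE)\big{\vert}_E) = H^0(E, L(mE-C)\big{\vert}_E)$, using $\mathcal I_{Z/E} = \mathcal O_E(-C)\big{\vert}_E$ and that $E$ meets $C$ properly so $Z$ is the restriction of the Cartier divisor $C$ to $E$. Equivalently $\beta\in H^0(S,\mathcal I_{Z/S}\otimes L(mE))$: near a point of $Z$ the section $\beta$ has a pole along $E$ but is congruent to a regular section along $C$, forcing it into the ideal of $Z$.

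Finally, for the nonvanishing criterion of $\beta\big{\vert}_E$: the principal part $\beta\big{\vert}_E$ is zero exactly when $\beta$ actually lies in the subsheaf $L((m-1)E)\subset L(mE)$, i.e. $\beta$ has a pole of order at most $m-1$. By part (1) applied with $m-1$ in place of $m$, the class $\overline\gamma_{(m-1)}$ lifts to $H^0(S,L((m-1)E))$ if and only if $\overline\gamma_{(m-1)}\cup\mathbf k_C=0$ in $H^1(S,L((m-1)E-C))$; and when such a lift exists it must agree with $\beta$ up to a section of $L((m-1)E)$ that restricts to $0$ on $C$, so $\beta\big{\vert}_E = 0$. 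Conversely if $\overline\gamma_{(m-1)}\cup\mathbf k_C\ne 0$ then $\overline\gamma_{(m-1)}$ does not lift, so $\beta$ cannot be reduced to order $m-1$ and $\beta\big{\vert}_E\ne 0$. I expect the main subtlety to be the bookkeeping in part (2) identifying the subsheaf into which $\beta\big{\vert}_E$ lands — matching $\mathcal I_{Z/S}\otimes L(mE)$ with the kernel of the principal-part map and checking $\mathcal I_{Z/E}\otimes L(mE)\big{\vert}_E \simeq L(mE-C)\big{\vert}_E$ — but this is local and follows from $C$ and $E$ being Cartier with proper intersection.
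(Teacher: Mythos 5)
Your proof of (1) and of the first assertion of (2) is correct and takes essentially the same route as the paper: (1) is exactly the long exact cohomology sequence of $\eqref{ses:CE}\otimes L(mE)$, whose coboundary is $\cup\,\mathbf k_C$, and for the first part of (2) you are in effect performing the diagram chase that the paper carries out on its $3\times 3$-type diagram --- $\overline{\gamma}_{(m)}$ lies in the image of $H^0(C,L\big\vert_C)\to H^0(C,L(mE)\big\vert_C)$, hence vanishes on $Z$, hence so does $\beta\big\vert_C$, and therefore $\beta\big\vert_E$ lands in $H^0(E,\mathcal I_{Z/E}\otimes L(mE)\big\vert_E)\simeq H^0(E,L(mE-C)\big\vert_E)$ and $\beta\in H^0(S,\mathcal I_{Z/S}\otimes L(mE))$.

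There is, however, a flawed step in your treatment of the final equivalence. You assert that a lift $\beta'\in H^0(S,L((m-1)E))$ of $\overline{\gamma}_{(m-1)}$ ``must agree with $\beta$ up to a section of $L((m-1)E)$ that restricts to $0$ on $C$.'' Exactness does not give this: two lifts of $\overline{\gamma}_{(m)}$ in $H^0(S,L(mE))$ differ by an element of $H^0(S,L(mE-C))$, and $L(mE-C)$ is not contained in $L((m-1)E)$ (their intersection inside $L(mE)$ is only $L((m-1)E-C)$ when $C$ and $E$ share no component). So from $\overline{\gamma}_{(m-1)}\cup\mathbf k_C=0$ you may only conclude $\beta\big\vert_E=(\beta-\beta')\big\vert_E$ with $\beta-\beta'\in H^0(S,L(mE-C))$, which need not vanish along $E$; the implication ``$\overline{\gamma}_{(m-1)}\cup\mathbf k_C=0\Rightarrow\beta\big\vert_E=0$'' needs the extra input that $H^0(S,L(mE-C))$ maps into $H^0(S,L((m-1)E))$ (for instance that it vanishes, as it does in every application of the lemma in the paper). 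The opposite implication --- the only one used later, in the contrapositive form ``$\overline{\gamma}_{(m-1)}\cup\mathbf k_C\ne 0\Rightarrow\beta\big\vert_E\ne 0$'' --- you do prove correctly: if $\beta\in H^0(S,L((m-1)E))$ then $\beta$ itself lifts $\overline{\gamma}_{(m-1)}$ (using injectivity of $H^0(C,L((m-1)E)\big\vert_C)\to H^0(C,L(mE)\big\vert_C)$), so the cup product vanishes by (1). Thus the gap is confined to a direction that is not needed downstream, and one which the paper's own one-line ``diagram chase'' also leaves implicit; but as written your justification of it would fail.
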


\begin{proof}
(1) follows from the short exact sequence $\eqref{ses:CE}\otimes L(mE)$,
whose coboundary map coincides with the cup product map
$\cup \mathbf k_C: H^0(C,L(mE)\big{\vert}_C) \rightarrow H^1(S,L(mE-C))$
with $\mathbf k_C$.
(2) follows from a diagram chase on the commutative diagram
$$
  \begin{array}{ccccc}
    & & 0 & & 0\\
    & & \downarrow & & \downarrow \\
    & & H^0(S,L(mE-C)) & \mapright{|_E} & H^0(E,L(mE-C)\big{\vert}_E) \\
    & & \mapdown{} & & \mapdown{} \\
    H^0(S,L((m-1)E))& \longrightarrow & H^0(S,L(mE)) & \mapright{|_E} & H^0(E,L(mE)\big{\vert}_E)\\
    \mapdown{|_C} & & \mapdown{|_C} & & \mapdown{|_C} \\
    H^0(C,L((m-1)E)\big{\vert}_C) & \longrightarrow & H^0(C,L(mE)\big{\vert}_C) & \mapright{|_E} & k(Z)\\
    \mapdown{\cup \mathbf k_C} & & \mapdown{\cup \mathbf k_C} & &  \\
    H^1(S,L((m-1)E-C))& \longrightarrow & H^1(S,L(mE-C)) &  &  \\
  \end{array}
$$
of cohomology groups, which is exact both vertically and horizontally.
\end{proof}

\paragraph{\bf Proof of Theorem~\ref{thm:main1}.}\quad
We first recall the relation between $\alpha$ 
and $\partial_E(\beta\big{\vert}_E)$ by Figure~\ref{fig:relation}.
\begin{figure}[h]
  $$
  \begin{array}{ccccccccc}
    H^0(N_{C/V}) & \ni & \alpha & & & & & & H^0(N_{E/V}(mE)) \\
    \mapdown{\pi_{C/S}} && \big\downarrow && && &&\mapdown{\pi_{E/S}(mE)} \\
    H^0(N_{S/V}\big{\vert}_C) & \ni & \beta\big{\vert}_C
    & \overset{res}\longmapsfrom & \beta & \overset{res}\longmapsto &
    \beta\big{\vert}_E & \in & H^0(N_{S/V}(mE)\big{\vert}_E) \\
    \bigcap &&&& \rotatebox{90}{$\ni$} && \mapdown{} && \mapdown{\partial_E}\\
    H^0(N_{S/V}(mE)\big{\vert}_C) && \overset{res}\longleftarrow && 
    H^0(N_{S/V}(mE)) && \partial_E(\beta\big{\vert}_E) & \in & H^1(\mathcal O_E((m+1)E))
  \end{array}
  $$
  \caption{Relation between $\alpha$ and $\partial_E(\beta\big{\vert}_E)$}
  \label{fig:relation}
\end{figure}
We use the same strategy as \cite{Mukai-Nasu},
in which the proof is separated into 3 steps.
We follow the same steps and prove
$\overline{\ob_S(\alpha)}_{(m+1)}\ne 0$ 
in $H^1(C,N_{S/V}((m+1)E)\big{\vert}_C)$
instead of proving $\ob_S(\alpha)\ne 0$ in $H^1(C,N_{S/V}\big{\vert}_C)$.

Let $\alpha$ be a global section of $N_{C/V}$,
and $\pi_{C/S}(\alpha)\in H^0(C,N_{S/V}\big{\vert}_C)$ 
the exterior component of $\alpha$.
Suppose that the image 
$\overline{\pi_{C/S}(\alpha)}_{(m)}$ of $\pi_{C/S}(\alpha)$ 
in $H^0(C,N_{S/V}(mE)\big{\vert}_C)$ lifts to a section 
$\beta \in H^0(S,N_{S/V}(mE))\setminus H^0(S,N_{S/V}((m-1)E))$
for some integer $m \ge 1$,
i.e., an infinitesimal deformation with a pole along $E$.
We need the relation between the two $d$-maps 
$d_{C,S}$ and $d_S$ (cf.~Definition~\ref{dfn:d-map}),
allowing a pole along $E$.
The polar version of the diagram \eqref{diag:d-map} 
is the following {\it partially commutative} diagram:

\begin{equation}\label{diag:d-map with pole}
 \begin{array}{ccccc}
  \beta & \in & H^0(S,N_{S/V}(mE)) & \mapright{d_S} & 
   H^1(S,\mathcal O_S((m+1)E)) \\
  && & & \mapdown{|_C}\\
  && \mapdown{|_C} && H^1(C,\mathcal O_C((m+1)Z)) \\
  && && \mapdown{H^1(\iota)} \\
  && H^0(C,N_{S/V}(mE)\big{\vert}_C)& & 
   H^1(C,{N_{C/V}}^{\vee}\otimes N_{S/V}((m+1)E)\big{\vert}_C)\\
  && \cup &&\uparrow\\
  \gamma & \in & H^0(C,N_{S/V}\big{\vert}_C) & \mapright{d_{C,S}} & 
   H^1(C,{N_{C/V}}^{\vee}\otimes N_{S/V}\big{\vert}_C),
 \end{array}
\end{equation}
in which, the commutativity holds only for 
$\gamma \in H^0(C,N_{S/V}\big{\vert}_C)$ 
which has a lift $\beta \in H^0(S,N_{S/V}(mE))$.
More precisely, for such a pair $\gamma$ and $\beta$, we have
\begin{equation}\label{eq:d-map with pole}
\overline{d_{C,S} (\gamma)}_{(m+1)} = H^1(\iota)(d_S(\beta)\big{\vert}_C).
\end{equation}

\medskip \noindent 
{\bf Step 1} \qquad
$\overline{\ob_S(\alpha)}_{(m+1)}
=d_{S}(\beta)\big{\vert}_C \cup \pi_{C/S}(\alpha) 
\quad \text{in} \quad H^1(C,N_{S/V}((m+1)E)\big{\vert}_C)$.

\begin{proof}
By Lemma~\ref{lem:exterior}, we have
$\overline{\ob_S(\alpha)}_{(m+1)}
=\overline{d_{C,S}(\pi_{C/S}(\alpha))}_{(m+1)} \cup \alpha$.
Then it follows from \eqref{eq:d-map with pole} that
$\overline{d_{C,S}(\pi_{C/S}(\alpha))}_{(m+1)}=
H^1(\iota)(d_S(\beta)\big{\vert}_C)$.
By the commutative diagram
$$
\begin{array}{ccccc}
 H^1({N_{C/V}}^{\vee}\otimes N_{S/V}((m+1)E)) & \times 
  & H^0(N_{C/V}) & \overset{\cup}{\longrightarrow} 
  & H^1(N_{S/V}((m+1)E)\big{\vert}_C)\\
 \mapup{H^1(\iota)} && \mapdown{\pi_{C/S}} && \Vert \\
 H^1(\mathcal O_C((m+1)Z)) & \times & H^0(N_{S/V}\big{\vert}_C) 
  & \overset{\cup}{\longrightarrow} & H^1(N_{S/V}((m+1)E)\big{\vert}_C),
\end{array}
$$
we have the required equation.
\end{proof}

Next we relate $\ob_S(\alpha)$ with a cohomology class on $E$.
Let $\mathbf k_C$ and $\mathbf k_E$ be the extension classes 
defined by \eqref{ses:CE}.

\medskip \noindent
{\bf Step 2} \qquad
$\overline{\ob_S(\alpha)}_{(2m)} \cup \mathbf k_C 
=(d_S(\beta)\big{\vert}_E \cup \beta\big{\vert}_E) \cup \mathbf k_E 
\quad \text{in} \quad H^2(S,N_{S/V}(2mE-C))$.

\begin{proof}
We note that for every integers $i$, $n \ge 0$ and
for any coherent sheaf $\mathcal F$ on $S$, 
the map $H^i(S,\mathcal F)\rightarrow H^i(S,\mathcal F(nE))$,
$* \mapsto \overline *_{(n)}$,
and the cup product maps are compatible. For example,
the diagram
$$
\begin{CD}
  H^0(C,N_{S/V}\big{\vert}_C) @>{d_S(\beta)\cup}>> H^1(C,N_{S/V}((m+1)E)\big{\vert}_C)\\
  @VVV @VVV \\
  H^0(C,N_{S/V}((m-1)E)\big{\vert}_C) @>{d_S(\beta)\cup}>> H^1(C,N_{S/V}(2mE)\big{\vert}_C)
\end{CD}
$$
is commutative. Therefore, by Step 1 we have 
$$
\overline{\ob_S(\alpha)}_{(2m)}
=\overline{d_{S}(\beta)\big{\vert}_C \cup \pi_{C/S}(\alpha)}_{(m-1)}
=d_{S}(\beta)\big{\vert}_C \cup \overline{\pi_{C/S}(\alpha)}_{(m-1)}
$$
in $H^1(C,N_{S/V}(2mE)\big{\vert}_C)$.
Since there exists a commutative diagram
\begin{equation}\label{diag:res-coboundary1}
\begin{array}{ccccc}
 && \overline{\pi_{C/S}(\alpha)}_{(m-1)} && \overline{\ob_S(\alpha)}_{(2m)}\\ [-10pt]
  && \rotatebox{-90}{$\in$} && \rotatebox{-90}{$\in$} \\ [4pt]
  H^1(\mathcal O_C((m+1)Z)) & \times & H^0(N_{S/V}((m-1)E)\big{\vert}_C) 
  & \overset{\cup}{\longrightarrow} & H^1(N_{S/V}(2mE)\big{\vert}_C)\\
 \mapup{|_C} && \mapdown{\cup \, \mathbf k_C} && 
  \mapdown{\cup \, \mathbf k_C} \\
 H^1(\mathcal O_S((m+1)E)) & \times & H^1(N_{S/V}((m-1)E-C))
  & \overset{\cup}{\longrightarrow} & H^2(N_{S/V}(2mE-C)), \\
 \rotatebox{90}{$\in$} && && \\ [-4pt] 
  d_S(\beta) && && \\ 
\end{array}
\end{equation}
we have
$$
\overline{\ob_S(\alpha)}_{(2m)} \cup \mathbf k_C
=(d_{S}(\beta)\big{\vert}_C \cup 
\overline{\pi_{C/S}(\alpha)}_{(m-1)}) \cup \mathbf k_C
= d_{S}(\beta) \cup (\overline{\pi_{C/S}(\alpha)}_{(m-1)} \cup \mathbf k_C)
$$
in $H^2(S,N_{S/V}(2mE-C))$.
Then by Lemma~\ref{lem:restriction to C and E},
$\beta$ is contained in the subgroup
$H^0(S,\mathcal I_{Z/S} \otimes N_{S/V}(mE)) \subset H^0(S,N_{S/V}(mE))$,
and its restriction $\beta\big{\vert}_C$ to $C$ is a global section 
of the invertible sheaf
$$
\mathcal I_{Z/S} \otimes N_{S/V}(mE)\big{\vert}_C
\simeq \mathcal O_C(-Z)\otimes N_{S/V}(mE)\big{\vert}_C
\simeq N_{S/V}((m-1)E)\big{\vert}_C
$$
on $C$ and we have 
$\beta\big{\vert}_C = \overline{\pi_{C/S}(\alpha)}_{(m-1)}$
by assumption. Therefore we obtain
$$
d_{S}(\beta) \cup (\overline{\pi_{C/S}(\alpha)}_{(m-1)} \cup \mathbf k_C)
=d_{S}(\beta) \cup (\beta\big{\vert}_C \cup \mathbf k_C).
$$
Then \cite[Lemma~2.8]{Mukai-Nasu} shows that we have
$\beta\big{\vert}_C \cup \mathbf k_C
=\beta\big{\vert}_E \cup \mathbf k_E$
in $H^1(S,N_{S/V}((m-1)E-C))$.
Hence we have
$$
d_{S}(\beta) \cup (\beta\big{\vert}_C \cup \mathbf k_C)
=d_{S}(\beta) \cup (\beta\big{\vert}_E \cup \mathbf k_E)
=(d_{S}(\beta)\big{\vert}_E \cup \beta\big{\vert}_E) \cup \mathbf k_E,
$$
where the last equality follows from the commutative diagram
\begin{equation}\label{diag:res-coboundary2}
  \begin{array}{ccccc}
    && \beta\big{\vert}_E && \\ [-10pt]
    && \rotatebox{-90}{$\in$} && \\ [4pt]
    H^1(\mathcal O_E((m+1)E)) & \times & H^0(N_{S/V}(mE-C)\big{\vert}_E)
    & \overset{\cup}{\longrightarrow} & H^1(N_{S/V}((2m+1)E-C)\big{\vert}_E) \\
    \mapup{|_E} && \mapdown{\cup \, \mathbf k_E} && 
    \mapdown{\cup \, \mathbf k_E} \\
    H^1(\mathcal O_S((m+1)E)) & \times & H^1(N_{S/V}((m-1)E-C))
    & \overset{\cup}{\longrightarrow} & H^2(N_{S/V}(2mE-C)), \\
    \rotatebox{90}{$\in$} && && \\ [-4pt] 
    d_S(\beta) && && \\ 
  \end{array}
\end{equation}
similar to \eqref{diag:res-coboundary1}.
Thus we obtain the equation required.
\end{proof}

\medskip \noindent
{\bf Step 3} \quad 
Let $\partial_E$ be the coboundary map of \eqref{ses:normal bundle of E}.
Then by Proposition~\ref{prop:key}~(2),
we have 
$d_{S}(\beta)\big{\vert}_E\cup \beta\big{\vert}_E
=m\partial_E (\beta\big{\vert}_E)\cup \beta\big{\vert}_E$,
which is nonzero by the assumption (b).
Consider the coboundary map
$$
\cup\, \mathbf k_E: H^1(E,N_{S/V}((2m+1)E-C)\big{\vert}_E) \longrightarrow
H^2(S,N_{S/V}(2mE-C)),
$$
which appears in \eqref{diag:res-coboundary2}.
By the Serre duality, it is dual to the restriction map
$$
H^0(S,C+K_V\big\vert_S -2mE) \overset{|_E}{\longrightarrow} H^0(E,(C+K_V-2mE)\big\vert_E),
$$
which is surjective by the assumption (a).
Hence the coboundary map $\cup\, \mathbf k_E$ is injective.
Therefore we obtain
$d_{S}(\beta)\big{\vert}_E \cup \beta\big{\vert}_E \cup \mathbf k_E\ne 0$
and hence by Step 2 we conclude that
$\overline{\ob_S(\alpha)}_{(2m)}\ne 0$ in $H^1(C,N_{S/V}(2mE)\big{\vert}_C)$,
and hence we have finished the proof of Theorem~\ref{thm:main1}.
\qed

\medskip

Let 
$\pi_{E/S}(mE): H^0(E,N_{E/V}(mE)) \rightarrow H^0(E,N_{S/V}(mE)\big{\vert}_E)$ 
be the map induced by \eqref{ses:normal bundle of E}.
If this map is not surjective, then
the sections $\gamma$ in its image span a proper linear subsystem
$$
\Lambda':=\left\{
  \div(\gamma) \bigm|
  \gamma \in 
  \im \pi_{E/S}(mE)
\right\}
$$
of the complete linear system $\Lambda:=|N_{S/V}(mE)\big{\vert}_E|$ on $E$,
where $\div (\gamma)$ denotes the divisor of zeros for $\gamma$.
The condition (b) in Theorem~\ref{thm:main1} can be replaced 
with the following conditions (b1), (b2) and (b3) in the next corollary,
which is more accessible in many situations.

\begin{cor}
  \label{cor:obstruction}
  Let $C \subset S\subset V$, $E$, $\alpha$, $\beta$, $\Delta$
  be as in Theorem~\ref{thm:main1}.
  Suppose that $\beta\big{\vert}_E \ne 0$.
  If the following conditions are satisfied, then the 
  exterior component $\ob_S(\alpha)$ of $\ob(\alpha)$ is nonzero.
  \begin{enumerate}
    \renewcommand{\theenumi}{{\alph{enumi}}}
    \item The restriction map 
    $
    H^0(S,\Delta) \overset{\vert_E}\longrightarrow 
    H^0(E,\Delta\big{\vert}_E)
    $
    is surjective,
    \item[(b1)] $m$ is not divisible by the characteristic $p$
    of the ground field $k$, 
    \item[(b2)] $E$ is irreducible curve of arithmetic genus $g(E)$ 
    and $(\Delta.E)=2g(E)-2-(m+1)E^2$.
   \item[(b3)] $Z:=C\cap E$ is not a member of $\Lambda'$.
  \end{enumerate} 
\end{cor}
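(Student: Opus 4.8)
The plan is to deduce Corollary~\ref{cor:obstruction} from Theorem~\ref{thm:main1} by showing that, under the hypotheses (b1), (b2), (b3) on $E$ (together with the assumption $\beta|_E\ne 0$), the condition (b) of the theorem is automatically satisfied, i.e.
$$
m\,\partial_E(\beta|_E)\cup\beta|_E\ne 0
\quad\text{in}\quad
H^1(E,N_{S/V}((2m+1)E-C)|_E).
$$
First I would dispense with the scalar $m$: by (b1), $m$ is invertible in $k$, so the displayed class is nonzero if and only if $\partial_E(\beta|_E)\cup\beta|_E\ne 0$. Thus it suffices to analyze the cup product
$$
H^1(E,\mathcal O_E((m+1)E))\times H^0(E,N_{S/V}(mE)|_E)\overset{\cup}{\longrightarrow}H^1(E,N_{S/V}((2m+1)E-C)|_E),
$$
where the first argument is $\partial_E(\beta|_E)$, the coboundary of $\beta|_E$ along the sequence \eqref{ses:normal bundle of E}, and the second is $\beta|_E$ itself.

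The key geometric input is the hypothesis (b2), which I expect to force $N_{S/V}(mE)|_E$ to be a line bundle of a very specific degree on the curve $E$. Indeed, from (b2), $(\Delta.E)=2g(E)-2-(m+1)E^2$ together with $\Delta=C+K_V|_S-2mE$ and $N_{S/V}\simeq -K_V|_S+C\cdots$ — more precisely using $\deg\bigl(N_{S/V}(mE)|_E\bigr)$ and the adjunction $\deg K_E=2g(E)-2=(K_S+E).E=E^2$ on a $K3$-like $S$ — one reads off that $\mathcal O_E((m+1)E)\simeq N_{S/V}(mE)|_E$ after twisting appropriately, or at any rate that $N_{S/V}((2m+1)E-C)|_E\simeq K_E$ (equivalently $H^1$ of it is one-dimensional by Serre duality on $E$). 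Once the target of the cup product is identified with $H^1(E,K_E)\simeq k$, the non-vanishing of $\partial_E(\beta|_E)\cup\beta|_E$ becomes a statement about a non-degenerate pairing: I would invoke the fact that $\beta|_E\ne 0$ and that the coboundary $\partial_E$ is, by \eqref{ses:normal bundle of E}, cup product with the extension class $\mathbf e'\in H^1(E,\mathcal O_E(E-C)|_E)\simeq H^1(E,N_{E/S}\otimes N_{S/V}^\vee|_E)$ (the class of the normal bundle sequence of $E$), so the whole expression is $\mathbf e'\cup\beta|_E\cup\beta|_E$. The role of (b3), that $Z=C\cap E$ does not lie in the sublinear system $\Lambda'=\{\div(\gamma):\gamma\in\im\pi_{E/S}(mE)\}$, is to guarantee that $\beta|_E$ is \emph{not} in the image of $\pi_{E/S}(mE)$: a section $\gamma\in\im\pi_{E/S}(mE)$ lifts to $N_{E/V}(mE)$, hence its coboundary $\partial_E(\gamma)$ vanishes, so if $\beta|_E$ were in the image the cup product would be zero; conversely the condition $\div(\beta|_E)=Z\notin\Lambda'$ (which holds because $\beta|_E$ has divisor $Z$ by Lemma~\ref{lem:restriction to C and E}) says exactly $\beta|_E\notin\im\pi_{E/S}(mE)$, i.e.\ $\partial_E(\beta|_E)\ne 0$.

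The remaining work is to upgrade $\partial_E(\beta|_E)\ne 0$ to $\partial_E(\beta|_E)\cup\beta|_E\ne 0$. Here I would use the standard mechanism: cup product with $\beta|_E\in H^0(E,N_{S/V}(mE)|_E)$, viewed as a nonzero section with divisor $Z$, induces on $H^1$ a map $H^1(E,\mathcal O_E((m+1)E))\to H^1(E,N_{S/V}((2m+1)E-C)|_E)$ whose kernel/cokernel is controlled by the exact sequence $0\to\mathcal O_E((m+1)E)\xrightarrow{\beta|_E}N_{S/V}((2m+1)E-C)|_E\to\mathcal O_Z\to 0$ (using $N_{S/V}(mE)|_E\simeq\mathcal O_E((m+1)E-Z)$ via the divisor of $\beta|_E$, which is where (b2) is used to pin down the twist). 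The connecting map $H^0(Z,\mathcal O_Z)\to H^1(E,\mathcal O_E((m+1)E))$ of this sequence is dual, by Serre duality on $E$, to the restriction $H^0(E,K_E(-(m+1)E)(Z))\to H^0(Z,\cdots)$, whose surjectivity is precisely (a reformulation of) the hypothesis (a) — the restriction $H^0(S,\Delta)\to H^0(E,\Delta|_E)$ being surjective, combined with $\Delta|_E$ having the degree dictated by (b2), makes $H^0(E,\Delta|_E)$ exactly the relevant space. Therefore cup product with $\beta|_E$ is injective on $H^1(E,\mathcal O_E((m+1)E))$, and since $\partial_E(\beta|_E)\ne 0$ in that group, we conclude $\partial_E(\beta|_E)\cup\beta|_E\ne 0$. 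Applying Theorem~\ref{thm:main1} with condition (b) now verified yields $\ob_S(\alpha)\ne 0$. The main obstacle I anticipate is the careful bookkeeping of line bundle degrees and twists on $E$ — matching $\Delta|_E$, $K_E$, $N_{S/V}(mE)|_E$ and $\mathcal O_E((m+1)E)$ so that the Serre-duality dualities line up exactly — and making sure (b2) is the precise numerical condition that makes the relevant $H^1$ one-dimensional and the pairing perfect; everything else is a diagram chase essentially parallel to Steps~2 and~3 in the proof of Theorem~\ref{thm:main1}.
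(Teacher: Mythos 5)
Your overall strategy is the paper's own: reduce to verifying condition (b) of Theorem~\ref{thm:main1}, use (b1) to discard the scalar $m$, use (b3) to force $\partial_E(\beta\big{\vert}_E)\ne 0$, and use (b2) as a degree condition. But two of your intermediate claims are wrong, and one is load-bearing. First, (b2) does \emph{not} give $N_{S/V}((2m+1)E-C)\big{\vert}_E\simeq K_E$, nor that its $H^1$ is one-dimensional. What the adjunction computation actually yields is $N_{S/V}(mE-C)\big{\vert}_E\simeq \mathcal O_E(K_E-\Delta-(m+1)E)$, and (b2) says precisely that this line bundle has degree \emph{zero}. Since it carries the nonzero section $\beta\big{\vert}_E$ (Lemma~\ref{lem:restriction to C and E}(2)) and $E$ is irreducible, it is therefore \emph{trivial}. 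That triviality is the key point: it gives $\div(\beta\big{\vert}_E)=Z$ exactly (which is what legitimizes your chain $Z\notin\Lambda'\Rightarrow\beta\big{\vert}_E\notin\im\pi_{E/S}(mE)\Rightarrow\partial_E(\beta\big{\vert}_E)\ne0$), and it gives $N_{S/V}((2m+1)E-C)\big{\vert}_E\simeq\mathcal O_E((m+1)E)$, whose $H^1$ is in general far from one-dimensional --- already in the paper's main application ($m=1$, $E$ a $(-2)$-curve on a $K3$) it is $H^1(\mathbb P^1,\mathcal O_{\mathbb P^1}(-4))\simeq k^3$, so there is no ``perfect pairing into $k$'' argument available.

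Second, your final step misassigns the role of hypothesis (a). The map $\cup\,\beta\big{\vert}_E\colon H^1(E,\mathcal O_E((m+1)E))\to H^1(E,N_{S/V}((2m+1)E-C)\big{\vert}_E)$ is injective (indeed an isomorphism) simply because $\beta\big{\vert}_E$ is a nowhere-vanishing section of the trivial bundle $N_{S/V}(mE-C)\big{\vert}_E$; no duality and no restriction to $Z$ is involved. Your proposed exact sequence $0\to\mathcal O_E((m+1)E)\to N_{S/V}((2m+1)E-C)\big{\vert}_E\to\mathcal O_Z\to0$ cannot be correct, since source and target have the same degree, so an injection of line bundles between them has zero cokernel. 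Hypothesis (a) is not needed anywhere in the corollary's reduction: it is consumed inside Theorem~\ref{thm:main1} itself (Step~3, the injectivity of $\cup\,\mathbf k_E$ via Serre duality on $S$). Once you replace the ``$\simeq K_E$'' claim by the triviality of $N_{S/V}(mE-C)\big{\vert}_E$ and drop the spurious appeal to (a), your argument becomes exactly the paper's proof.
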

\begin{proof}
It suffices to prove that the condition (b) of Theorem~\ref{thm:main1}
follows from the conditions (b1), (b2) and (b3) of this corollary.
Since we have
$N_{S/V}(mE-C) \simeq -K_V\big{\vert}_S+K_S+mE-C = K_S-\Delta-mE$,
there exists an isomorphism
\begin{equation}
  \label{isom:triviality}
  N_{S/V}(mE-C)\big{\vert}_E 
  \simeq \mathcal O_E(K_E-\Delta-(m+1)E) 
\end{equation}
of invertible sheaves on $E$, whose degree is zero by (b2).
By Lemma~\ref{lem:restriction to C and E},
$\beta\big{\vert}_E$ is a nonzero global section of 
$N_{S/V}(mE-C)\big{\vert}_E$.
Since $E$ is irreducible,
the invertible sheaves in \eqref{isom:triviality} are trivial.
Hence as a section of $N_{S/V}(mE)\big{\vert}_E$,
the divisor $\div(\beta\big{\vert}_E)$ of zeros 
associated to $\beta\big{\vert}_E$ coincides with $Z$.
It follows from (b3) that $\partial_E(\beta\big{\vert}_E)\ne 0$
in $H^1(E,\mathcal O_E((m+1)E))$.
Since $\beta\big{\vert}_E$ is
a nonzero section of the trivial sheaf $N_{S/V}(mE-C)\big{\vert}_E$,
the cup product
$m\partial_E(\beta\big{\vert}_E)\cup \beta\big{\vert}_E$
is nonzero.
\end{proof}

We finish this section by giving a refinement of Theorem~\ref{thm:main1}.
Let $E_i$ ($1 \le i \le k$) be irreducible curves on $S$,
which are mutually disjoint and $C \ne E_i$ for any $i$.
We assume that for any two effective divisors $D,D'$ on $S$
with supports on $\bigcup_{i=1}^k E_i$, if $D \le D'$ then
the map $H^1(S,\mathcal O_S(D)) \rightarrow H^1(S,\mathcal O_S(D'))$
is injective.

\begin{thm}\label{thm:refinement}
  Let $E=\sum_{i=1}^k m_i E_i$ be a divisor on $S$ with $m_i \ge 1$.
  Let $\tilde C$ or $\alpha \in H^0(C,N_{C/V})$
  be a first order deformation of $C$.
  Suppose that $H^1(S,N_{S/V})=0$ and 
  the image of the exterior component
  $\pi_{C/S}(\alpha)$ in $H^0(C,N_{S/V}(E)\big{\vert}_C)$
  lifts to an infinitesimal deformation 
  $\beta \in H^0(S,N_{S/V}(E))$ with poles along $E$.
  If the following conditions are satisfied,
  then the exterior component 
  $\ob_S(\alpha)$ of $\ob(\alpha)$ is nonzero:
  \begin{enumerate}
    \renewcommand{\theenumi}{{\alph{enumi}}}
    \item \label{item:surjective}
    Let $\Delta:=C+K_V\big{\vert}_S-2E$ in $\Pic S$
    and let $E_{\red}:=\sum_{i=1}^k E_i$,
    i.e., the reduced part of $E$.
    Then the restriction map 
    $$
    H^0(S,\Delta) \overset{\vert_{E_{\red}}}\longrightarrow 
    H^0(E_{\red},\Delta\big{\vert}_{E_{\red}})
    $$
    to $E_{\red}$ is surjective, and 
    \item \label{item:nonzero}
    Let $\beta\big{\vert}_{E_i} \in H^0(E_i,N_{S/V}(m_iE_i)\big{\vert}_{E_i})$
    be the principal part of $\beta$ along $E_i$,
    and let $\partial_{E_i}$ be the coboundary map 
    of the exact sequence \eqref{ses:normal bundle of E} 
    for $E=E_i$ and $m=m_i$.
    Then there exists an integer $1 \le i \le k$ such that
    $$
    m_i\partial_{E_i}(\beta\big{\vert}_{E_i}) \cup
    \beta\big{\vert}_{E_i} \ne 0 
    \qquad
    \mbox{in}
    \qquad
    H^1(E_i,N_{S/V}((2m_i+1){E_i}-C)\big{\vert}_{E_i}).
    $$
  \end{enumerate}
\end{thm}
\begin{proof}
The proof is similar to that of Theorem~\ref{thm:main1}.
We follow the same steps in the proof.
Let $d_{S^\circ}: H^0(S^{\circ},N_{S^{\circ}/V^{\circ}}) 
\rightarrow H^1(S^{\circ},\mathcal O_{S^\circ})$
be the $d$-map for $S^{\circ} \subset V^{\circ}$.
Then the image $d_S(\beta)$ of $\beta \in H^0(S,N_{S/V}(E))$
is contained in $H^1(S,\mathcal O_S(E^+))
\subset H^1(S^{\circ},\mathcal O_{S^{\circ}})$
by Proposition~\ref{prop:key2},
where $E^+:=E+\sum_{i=1}^kE_i=\sum_{i=1}^k(m_i+1)E_i$.
There exists a partially commutative diagram similar to
\eqref{diag:d-map with pole}, which connects 
the two polar $d$-maps $d_{C,S}$ and $d_S$ with poles along $E$.
By using this diagram, we find
$$
r(d_{C,S}(\pi_{C/S}(\alpha)),E^{+})=
H^1(\iota)(d_S(\beta)\big{\vert}_C)
\qquad
\mbox{in}
\qquad
H^1(C,{N_{C/V}}^{\vee}\otimes N_{S/V}(E^+)\big{\vert}_C).
$$
Here and later, 
given a coherent sheaf $\mathcal F$ on $S$ (resp. $C$),
an effective divisor $D\ge 0$ on $S$,
an integer $i\ge 0$
and a cohomology class $*$ in $H^i(S,\mathcal F)$,
we denote by $r(*,D)$ the image of
$*$ by the natural map
$$
H^i(S,\mathcal F) \rightarrow H^i(S,\mathcal F(D))
$$
and use a similar notation for $C$ also.
We will show that 
$$
r(\ob_S(\alpha),E^+)\ne 0
\qquad
\mbox{in}
\qquad
H^1(C,N_{S/V}(E^+)\big{\vert}_C).
$$
Using the argument in Step 1 before, we get
$r(\ob_S(\alpha),E^+)=d_S(\beta)\big{\vert}_C \cup
\pi_{C/S}(\alpha)$,
where the cup product is taken by the map
$$
H^1(C,\mathcal O_C(E^+))\times
H^0(C,N_{S/V}\big{\vert}_C)
\overset{\cup}\longrightarrow 
H^1(C,N_{S/V}(E^+)\big{\vert}_C).
$$
We consider the cup product
$$
r(\ob_S(\alpha),E^+)\cup \mathbf k_C
\qquad
\mbox{in}
\qquad
H^2(S,N_{S/V}(E^+-C))
$$
of $\overline{\ob_S(\alpha)}$
with $\mathbf k_C \in \Ext^1_S(\mathcal O_C,\mathcal O_S(-C))$
and its reduced image in 
$$
r(r(\ob_S(\alpha),E^+),E^-)\cup \mathbf k_C
\qquad
\mbox{in}
\qquad
\in H^2(S,N_{S/V}(2E-C)),
$$
where $E^-$ is a divisor on $S$ defined by
$E^-:=E-\sum_{i=1}^k E_i=\sum_{i=1}^k (m_i-1)E_i$.
Then there exists a commutative diagram
$$
\begin{array}{ccccc}
  H^1(C,\mathcal O_C(E^+)) & \times & H^0(C,N_{S/V}\big{\vert}_C)
  & \overset{\cup}{\longrightarrow} & H^1(C,N_{S/V}(E^+)\big{\vert}_C)\\
  \Vert && \mapdown{r} && \mapdown{r} \\
  H^1(C,\mathcal O_C(E^+)) & \times & H^0(C,N_{S/V}(E^-)\big{\vert}_C) 
  & \overset{\cup}{\longrightarrow} & H^1(C,N_{S/V}(2E)\big{\vert}_C)\\
 \mapup{|_C} && \mapdown{\cup \, \mathbf k_C} && 
  \mapdown{\cup \, \mathbf k_C} \\
 H^1(S,\mathcal O_S(E^+)) & \times & H^1(S,N_{S/V}(E^--C))
  & \overset{\cup}{\longrightarrow} & H^2(S,N_{S/V}(2E-C)),
\end{array}
$$
since $E^++E^-=2E$ by definition.
By using this diagram, we compute the cup product
$r(r(\ob_S(\alpha),E^+)\cup \mathbf k_C,E^-)$ 
as
\begin{align*}
  r(r(\ob_S(\alpha),E^+)\cup \mathbf k_C,E^-)
  &=r(d_S(\beta)\big{\vert}_C \cup \pi_{C/S}(\alpha)\cup \mathbf k_C,E^-) \\
  &= (d_S(\beta)\big{\vert}_C \cup r(\pi_{C/S}(\alpha),E^-)) \cup \mathbf k_C \\
  &= d_S(\beta)\cup (r(\beta\big{\vert}_C,E^-) \cup \mathbf k_C).
\end{align*}
Let $E_{\red}=\sum_{i=1}^k E_i$ be the reduced part of $E$
as in the statement.
Then for $E_{\red}$ there exists another commutative diagram
$$
\begin{array}{ccccc}
 H^1(S,\mathcal O_S(E^+)) & \times & H^1(S,N_{S/V}(E^--C))
  & \overset{\cup}{\longrightarrow} & H^2(S,N_{S/V}(2E-C)) \\
 \mapdown{|_{E_{\red}}} && \mapup{\cup \, \mathbf k_{E_{\red}}} && 
  \mapup{\cup \, \mathbf k_{E_{\red}}} \\
  H^1(E_{\red},\mathcal O_{E_{\red}}(E^+)) & \times &
  H^0(E_{\red},N_{S/V}(E-C)\big{\vert}_{E_{\red}})
  & \overset{\cup}{\longrightarrow} & H^1(E_{\red},N_{S/V}(E+E^+-C)\big{\vert}_{E_{\red}}).
\end{array}
$$
Since $E_i$ are mutually disjoint, 
here each cohomology group on $E_{\red}$ is isomorphic to a direct sum
of cohomology groups on $E_i$ and we obtain
\begin{align*}
  H^1(E_{\red},\mathcal O_{E_{\red}}(E^+)) &\simeq 
  \bigoplus_{i=1}^k H^1(E_i,\mathcal O_{E_i}((m_i+1)E_i)) \\
  H^0(E_{\red},N_{S/V}(E-C)\big{\vert}_{E_{\red}}) & \simeq 
  \bigoplus_{i=1}^k H^0(E_i,N_{S/V}(m_iE_i-C)) \\
  H^1(E_{\red},N_{S/V}(E+E^+-C)\big{\vert}_{E_{\red}}) &\simeq
  \bigoplus_{i=1}^k H^1(E_i,N_{S/V}((2m_i+1)E_i-C)).
\end{align*}
Then since the ideal of $C\cap E_{\red}$ in $S$
has a Koszul resulution
$0 \rightarrow \mathcal O_S(-C-E_{\red})
\rightarrow \mathcal O_S(-C)\oplus \mathcal O(-E_{\red})
\rightarrow \mathcal I_{C\cap E_{\red}/S}
\rightarrow 0$,
by \cite[Lemma~2.8]{Mukai-Nasu}, we have
$r(\beta\big{\vert}_C,E^-) \cup \mathbf k_C
=\beta\big{\vert}_{E_{\red}} \cup \mathbf k_{E_{\red}}$
in $H^1(S,N_{S/V}(E^--C))$.
Then we obtain
\begin{equation}
  \label{eqn:reduced cup product}
  d_S(\beta)\cup ((r(\beta\big{\vert}_C,E^-) \cup \mathbf k_C) 
  = (d_S(\beta)\big{\vert}_{E_{\red}} \cup \beta\big{\vert}_{E_{\red}})
  \cup \mathbf k_{E_{\red}}. 
\end{equation}
Then by Proposition~\ref{prop:key2}(2) and the assumption \eqref{item:nonzero},
we have
$$
d_S(\beta)\big{\vert}_{E_{\red}} \cup \beta\big{\vert}_{E_{\red}}
=\oplus_{i=1}^k d_S(\beta)\big{\vert}_{E_i}\cup \beta\big{\vert}_{E_i}
=\oplus_{i=1}^k m_i\partial_{E_i}(\beta\big{\vert}_{E_i})\cup \beta\big{\vert}_{E_i}\ne 0.
$$
It follows from the assumption \eqref{item:surjective} that the cup product map
$$
H^1(E_{\red},N_{S/V}(E+E^+-C)\big{\vert}_{E_{\red}}) \overset{\cup \mathbf k_{E_{\red}}}{\longrightarrow}
H^2(S,N_{S/V}(2E-C))
$$
with $\mathbf k_{E_{\red}} \in \Ext^1_S(\mathcal O_{E_{\red}},\mathcal O_S(-E_{\red}))$
is injective.
Therefore
$(d_S(\beta)\big{\vert}_{E_{\red}} \cup \beta\big{\vert}_{E_{\red}})
  \cup \mathbf k_{E_{\red}}$ is nonzero
and thus we have completed the proof.
\end{proof}

\begin{rmk}
  \label{rmk:newly added!}
  Note that the cup product map $\cup \mathbf k_{E_{\red}}$ is 
  the direct sum $\oplus_{i=1}^k \cup \mathbf k_{E_i}$ of 
  $\cup \mathbf k_{E_i}$ with each $\mathbf k_{E_i}$ ($1 \le i \le k$)
  because $E_i$ are mutually disjoint.
  The last cup product in the equation
  \eqref{eqn:reduced cup product} is expressed as
  a sum of the coboundary image of 
  $m_i\partial_{E_i}(\beta\big{\vert}_{E_i})\cup \beta\big{\vert}_{E_i}$
  on each $E_i$ as follows:
  $$
  (d_S(\beta)\big{\vert}_{E_{\red}} \cup \beta\big{\vert}_{E_{\red}})
  \cup \mathbf k_{E_{\red}}
  =\sum_{i=1}^k
  m_i\partial_{E_i}(\beta\big{\vert}_{E_i})\cup \beta\big{\vert}_{E_i}
  \cup \mathbf k_{E_i},
  $$
  which is an element in $H^1(S,N_{S/V}(2E-C))$.
\end{rmk}

\section{Obstructions to deforming curves lying on a $K3$ surface} 
\label{sect:k3}

In this section, we prove Theorem~\ref{thm:main2}
and Corollary~\ref{cor:main3}.
In this and later sections, we assume that $\car k=0$.
Let $C \subset S \subset V$ be as in the theorem.
Then by Lemma~\ref{lem:flag of k3fano},
the Hilbert-flag scheme $\HF V$ is nonsingular at $(C,S)$,
and moreover, $A^2(C,S)=0$.
Put $D:=C+K_V\big{\vert}_S$, a divisor on $S$.
Then by the same lemma together with the Serre duality,
we have $H^i(S,N_{S/V}(-C)) \simeq  H^i(S,-D)\simeq H^{2-i}(S,D)^{\vee}$
for any integer $i$.

\paragraph{{\bf Proof of Theorem~\ref{thm:main2}}.}\quad
(1)\quad 
It is known that if there exist
no $(-2)$-curves and no elliptic curves on a smooth $K3$ surface $X$,
then every nonzero effective divisor on $X$ is ample.
(Then the effective cone $\eff(X)$ and the 
  ample cone $\amp(X)$ coincides.)
Therefore we have $H^1(S,D)=0$ by assumption.
Then the restriction map
$\rho: H^0(S,N_{S/V})\rightarrow H^0(C,N_{S/V}\big{\vert}_C)$
is surjective. Then by Lemma~\ref{lem:flag to hilb}(1), 
$\Hilb V$ is nonsingular at $[C]$.

(2)\quad
We show that the tangent map $p_1$ of 
$pr_1: \HF V \rightarrow \Hilb V$ at $(C,S)$ 
is not surjective and its cokernel is of dimension $1$.
Since $D \ge 0$ and $D\ne 0$, we have $H^0(S,-D)=0$.
Therefore by \eqref{seq:flag to hilb}, 
there exists an exact sequence 
\begin{equation}
  \label{ses:flag to hilb}
  0 \longrightarrow A^1(C,S) \overset{p_1}
  \longrightarrow H^0(C,N_{C/V}) 
  \longrightarrow H^1(S,-D)
  \longrightarrow 0. 
\end{equation}

\begin{claim}
  \label{claim:non-S-linear}
  $H^1(S,-D)\simeq k$ and $H^1(S,-D+E)=0$
\end{claim}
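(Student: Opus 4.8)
The plan is to deduce both assertions from cohomology of $E\simeq\mathbb P^1$, via Serre duality and the restriction sequences along $E$. Since $S$ is a $K3$ surface, $K_S$ is trivial, so $H^1(S,-D)\simeq H^1(S,D)^{\vee}$ and $H^1(S,-D+E)\simeq H^1(S,D-E)^{\vee}$; it therefore suffices to prove $h^1(S,D)=1$ and $H^1(S,D-E)=0$. As $E$ is a $(-2)$-curve with $E.D=-2$, the line bundle $\mathcal O_E(D-kE)$ on $E\simeq\mathbb P^1$ has degree $(D-kE).E=-2+2k$; hence $H^1(E,\mathcal O_E(D-kE))=0$ for all $k\ge 1$, while $H^0(E,\mathcal O_E(D))=0$ and $H^1(E,\mathcal O_E(D))\simeq k$.

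Next I would run the long exact cohomology sequences of $0\to\mathcal O_S(D-(k+1)E)\to\mathcal O_S(D-kE)\to\mathcal O_E(D-kE)\to 0$, descending from the hypothesis $H^1(S,D-3E)=0$. For $k=2$, $H^1(S,D-2E)$ sits between $H^1(S,D-3E)=0$ and $H^1(E,\mathcal O_E(D-2E))=0$, so it vanishes; for $k=1$, the same squeeze gives $H^1(S,D-E)=0$, which is the second assertion of the claim; and for $k=0$ we get an injection $H^1(S,D)\hookrightarrow H^1(E,\mathcal O_E(D))\simeq k$, so $h^1(S,D)\le 1$.

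It then remains to show $H^1(S,D)\ne 0$. Here I would invoke Lemma~\ref{lem:K3}(2): $D$ is a nonzero effective divisor (effective by hypothesis, nonzero since $D.E=-2$), $D^2\ge 0$ by hypothesis, and the effective divisor $\Delta:=E$ satisfies $\Delta^2=-2$ and $D.\Delta=-2\le -2$, so condition~(i) of that lemma applies and $H^1(S,D)\ne 0$. Combined with the bound above, $h^1(S,D)=1$, hence $H^1(S,-D)\simeq k$. (If one prefers to avoid Lemma~\ref{lem:K3}, the nonvanishing can be obtained directly: were $H^1(S,D)=0$, the $k=0$ sequence together with $H^2(S,D)\simeq H^0(S,-D)^{\vee}=0$ would force $H^2(S,D-E)\simeq k$, i.e.~$E-D$ effective; since $E$ is an irreducible reduced $(-2)$-curve and $D$ is effective and nonzero, this forces $D\sim E$ and hence $D^2=-2<0$, a contradiction.)

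The cohomology chases in the restriction sequences are entirely routine; the only step that genuinely uses the hypotheses $D^2\ge 0$ and $E.D=-2$, and is the real obstacle, is the nonvanishing $H^1(S,D)\ne 0$, which rests on the Knutsen--L\'opez classification of line bundles with $h^1>0$ on a $K3$ surface.
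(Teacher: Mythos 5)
Your proof is correct, and its skeleton is the same as the paper's: both reduce the claim via Serre duality to statements about $H^1(S,D-lE)$, and both descend from the hypothesis $H^1(S,D-3E)=0$ through the restriction sequences $0\to\mathcal O_S(D-(l+1)E)\to\mathcal O_S(D-lE)\to\mathcal O_{\mathbb P^1}(2l-2)\to 0$ to get $H^1(S,D-E)=H^1(S,D-2E)=0$. The one place you diverge is the final step. The paper pins down $h^1(S,D)=1$ exactly by proving $H^2(S,D-E)=0$: it assumes $-D+E$ is effective, computes $(-D+E)^2=D^2+2>0$, and derives a contradiction from the signature (Hodge index) theorem, so that the $l=0$ sequence gives $H^1(S,D)\simeq H^1(\mathbb P^1,\mathcal O_{\mathbb P^1}(-2))\simeq k$ on the nose. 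You instead get the upper bound $h^1(S,D)\le 1$ from injectivity and the lower bound from Lemma~\ref{lem:K3}(2) (Knutsen--L\'opez) applied with $\Delta=E$; this is legitimate since $D\ge 0$, $D\ne 0$, $D^2\ge 0$ and $D.E=-2$ are all in force. Your parenthetical alternative is in substance the same contradiction as the paper's, but replaces the signature theorem by the rigidity of the irreducible $(-2)$-curve $E$ (namely $E-D\ge 0$ forces $D\sim E$, hence $D^2=-2$), which is arguably more elementary. Either route closes the argument; the Knutsen--L\'opez version is shorter but leans on a quoted classification, while the paper's is self-contained modulo the index theorem.
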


\paragraph{\bf Proof of Claim~\ref{claim:non-S-linear}.}\quad 
Since $D.E=-2$ and $E^2=-2$,
there exists an exact sequence
\begin{equation}
\label{ses:(-2)-P^1}
0 \longrightarrow \mathcal O_S(D-(l+1)E)
\longrightarrow \mathcal O_S(D-lE)
\longrightarrow \mathcal O_{\mathbb P^1}(2l-2)
\longrightarrow 0
\end{equation}
for every integer $l$.
Since $H^1(\mathbb P^1,\mathcal O_{\mathbb P^1}(2l-2))=0$ for $l \ge 1$
and $H^1(S,D-3E)=0$, it follows from this exact sequence that
$H^1(S,D-lE)=0$ for $l=1,2$.
We prove $H^2(S,D-E)=0$. In fact, if $H^2(S,D-E)\ne 0$, then
by the Serre duality, $-D+E$ is effective 
and we have $(-D+E)^2=D^2+2 \ge 2 >0$.
Then it follows from the signature theorem 
(cf.~\cite[IV.2, Thm.~2.14 and VIII.1]{BHPV})
that $0 \le (D.-D+E) =-D^2-2<0$ and hence we get a contradiction.
Therefore by \eqref{ses:(-2)-P^1}, we have
$H^1(S,D) \simeq H^1(\mathbb P^1,\mathcal O_{\mathbb P^1}(-2)) \simeq k$.
By the Serre duality, we have proved the claim. 

\medskip

Let $\alpha$ be a global section of $N_{C/V}$. 
It suffices to prove the next claim.
\begin{claim}
  \label{claim:obstruction}
  $\ob_S(\alpha)\ne 0$ if $\alpha \not \in \im p_1$.
\end{claim}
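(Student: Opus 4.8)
The plan is to apply Theorem~\ref{thm:main1} to the $(-2)$-curve $E$ with $m=1$. First I would verify the hypotheses of that theorem. The injectivity of $H^1(S,\mathcal O_S(kE)) \to H^1(S,\mathcal O_S((k+1)E))$ for all $k \ge 1$ follows from the exact sequence $\eqref{ses:normal bundle of E}$-type sequence $0 \to \mathcal O_S(kE) \to \mathcal O_S((k+1)E) \to \mathcal O_{\mathbb P^1}(-2k-2) \to 0$, whose $H^0$ term vanishes, so the connecting map $H^0(\mathbb P^1,\mathcal O(-2k-2)) = 0 \to H^1(S,\mathcal O_S(kE))$ is zero and the next map is injective. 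Next, for condition (a) of Theorem~\ref{thm:main1} with $\Delta = C + K_V|_S - 2E = D - 2E$: I would use the isomorphism $N_{S/V}(E-C)|_E \simeq \mathcal O_E(K_E - \Delta - 2E)$ (as in $\eqref{isom:triviality}$) together with $E.\Delta = E.(D-2E) = -2 - 2(-2) = 2 = 2g(E)-2-(m+1)E^2$ with $g(E)=0$, $m=1$, $E^2=-2$; so $\Delta|_E$ has degree $2g(E)-2$ and the relevant sheaf on $E\simeq\mathbb P^1$ is $\mathcal O_{\mathbb P^1}$. Surjectivity of $H^0(S,\Delta) \to H^0(E,\Delta|_E) = k$ then reduces to showing $\Delta = D-2E$ is effective (or at least has a section not vanishing on $E$); since $D\ge 0$, $D^2\ge 0$, and $E.D=-2<0$, $E$ is a fixed component of $|D|$, and in fact $D-2E$ remains effective — I would argue this from $D.E=-2=E.E$, Riemann–Roch, and $H^1(S,D-3E)=0$, or more directly: the sequence $\eqref{ses:(-2)-P^1}$-analogue shows $h^0(S,D-lE)$ is constant for $l=0,1,2$, so $h^0(S,D-2E) = h^0(S,D) > 0$ and no section of $D-2E$ can vanish identically on $E$, forcing surjectivity onto $H^0(E,\mathcal O_E)=k$.

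Then I would establish condition (b), namely $\partial_E(\beta|_E) \cup \beta|_E \ne 0$ in $H^1(E, N_{S/V}(3E-C)|_E)$ (with $m=1$, so $2m+1=3$). By the hypothesis of Theorem~\ref{thm:main2}(2), $h^1(S,D)=1$ (just proved in Claim~\ref{claim:non-S-linear}), so the exterior component $\pi_{C/S}(\alpha)$ lifts to $H^0(C,N_{S/V}(E)|_C)$ and further to a genuine $\beta \in H^0(S,N_{S/V}(E))$ precisely when $\alpha \notin \im p_1$ — this is where the failure of $\alpha$ to come from the flag scheme is used, via Lemma~\ref{lem:restriction to C and E}(1): the obstruction $\overline{\pi_{C/S}(\alpha)} \cup \mathbf k_C \in H^1(S, N_{S/V}(E-C)) \simeq H^1(S,-D+E) = 0$ (by Claim~\ref{claim:non-S-linear}) vanishes, so a lift $\beta$ exists; and $\beta \notin H^0(S,N_{S/V})$ because otherwise $\alpha$'s exterior component would lift to $S$ without a pole, placing $\alpha$ in $\im p_1$. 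Hence $\beta|_E \ne 0$ by Lemma~\ref{lem:restriction to C and E}(2). Now I would invoke Corollary~\ref{cor:obstruction}: conditions (b1) (char $0$, so $m=1$ not divisible by $p$), (b2) ($E$ irreducible of genus $0$ with $\Delta.E = 2g(E)-2-(m+1)E^2 = 2$, checked above), and (b3) ($Z = C\cap E$ is not in $\Lambda' = \im \pi_{E/S}(E)$, which holds exactly because $\pi_{E/S}(E)$ is assumed not surjective and $\Lambda = |N_{S/V}(E)|_E|$ is a single point since $\deg N_{S/V}(E)|_E = -K_V.E - 2 = E.D - E^2 \cdot 0\ldots$ — I would compute $\deg N_{S/V}(E)|_E = (-K_V|_S).E + E^2 = (C-D).E + E^2$; using $E.D=-2$, $E^2=-2$ one gets the relevant degree, and the point is that $Z$, being the unique effective divisor $\div(\beta|_E)$ arising from the trivial sheaf $N_{S/V}(E-C)|_E$, coincides with the zero divisor but lies outside $\Lambda'$ by non-surjectivity).

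The main obstacle I anticipate is the bookkeeping in condition (b3) — correctly identifying the linear system $\Lambda$ on $E \simeq \mathbb P^1$, computing $\deg N_{S/V}(E)|_E$ (which I expect to work out to make $\Lambda$ a single point or a pencil), and translating "$\pi_{E/S}(E)$ is not surjective" into "$Z \notin \Lambda'$". Since $\Delta|_E = N_{S/V}(E-C)|_E$ is the trivial line bundle on $E$ (degree $0$, $E$ irreducible), the divisor of $\beta|_E$ as a section of $N_{S/V}(E)|_E$ is forced to be the fixed divisor $Z = \div(\beta|_E) = (E\cap C)$ coming from $\beta \in H^0(S,\mathcal I_{Z/S}\otimes N_{S/V}(E))$; so $Z$ is determined, and it lies in $\Lambda'$ iff it is $\div(\gamma)$ for some $\gamma \in \im\pi_{E/S}(E)$. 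If $\pi_{E/S}(E)$ were surjective then $\Lambda'=\Lambda \ni Z$ trivially, so non-surjectivity is what we need, but I must check $\Lambda' \subsetneq \Lambda$ actually excludes this particular $Z$ rather than merely some divisor — this is handled by the cup-product reformulation in Corollary~\ref{cor:obstruction}, where (b3) is stated precisely as "$Z$ is not a member of $\Lambda'$", so once I confirm the hypotheses feed into that corollary, the conclusion $\ob_S(\alpha) \ne 0$ follows, and since $\ob_S(\alpha)$ is the image of $\ob(\alpha)$ it is itself nonzero, proving $\alpha$ is obstructed and hence $\Hilb^{sc} V$ is singular at $[C]$.
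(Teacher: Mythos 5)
Your overall strategy --- lift $\pi_{C/S}(\alpha)$ to an infinitesimal deformation $\beta\in H^0(S,N_{S/V}(E))$ with a pole along $E$ using $H^1(S,-D+E)=0$, observe that $\alpha\notin\im p_1$ forces $\beta\big{\vert}_E\ne 0$ via Lemma~\ref{lem:restriction to C and E}, and feed the data into Corollary~\ref{cor:obstruction} --- is exactly the paper's, and your verification of the injectivity hypothesis of Theorem~\ref{thm:main1} (which the paper leaves implicit) is fine. However, your verification of condition (a) does not work. You assert that $H^0(E,\Delta\big{\vert}_E)=k$, but $\deg_E\Delta\big{\vert}_E=\Delta.E=2$, so on $E\simeq\mathbb P^1$ this space is $H^0(\mathbb P^1,\mathcal O(2))\simeq k^3$: you have conflated $\Delta\big{\vert}_E$ with the degree-zero sheaf $N_{S/V}(E-C)\big{\vert}_E\simeq\mathcal O_E(K_E-\Delta-2E)$ of \eqref{isom:triviality}, and it is the latter, not the former, that is trivial. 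Producing one section of $\Delta$ not vanishing on $E$ therefore does not give surjectivity onto a three-dimensional target, and the claim that $h^0(S,D-lE)$ is constant for $l=0,1,2$ is also false (it drops by one at $l=2$). The correct argument is the one you set aside: $H^1(S,\Delta-E)=H^1(S,D-3E)=0$ is a hypothesis of the theorem, and the long exact sequence of $0\to\mathcal O_S(\Delta-E)\to\mathcal O_S(\Delta)\to\mathcal O_E(\Delta)\to 0$ gives the surjectivity immediately.

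The more serious gap is condition (b3). You correctly identify the problem --- non-surjectivity of $\pi_{E/S}(E)$ only gives $\Lambda'\subsetneq\Lambda$, not that the particular divisor $Z=C\cap E=\div(\beta\big{\vert}_E)$ avoids $\Lambda'$ --- but your resolution is circular: Corollary~\ref{cor:obstruction} takes ``$Z\notin\Lambda'$'' as a hypothesis, so it cannot supply it. For the given curve $C$ the divisor $Z$ may well lie in $\Lambda'$, and then the argument yields nothing. The paper closes this gap by moving $C$ in its linear system: one proves $H^1(S,C-E)=0$ (here $C-E$ is nef and big, using $H^1(S,D-E)=0$, Lemma~\ref{lem:K3}(2) and the ampleness of $-K_V\big{\vert}_S$, then Kodaira--Ramanujam), so the map $|C|\dashrightarrow|\mathcal O_E(C)|=\Lambda$, $C'\mapsto C'\cap E$, is dominant; hence a general $C'\in|C|$ satisfies $C'\cap E\notin\Lambda'$, and upper semicontinuity of obstructedness transfers the conclusion back to $C$. (Your side remark that $\Lambda$ is a single point is also wrong in general: $\deg N_{S/V}(E)\big{\vert}_E=(-K_V\big{\vert}_S).E-2$, which is positive except for a conic on $V_4$.) Without this generization step the proof of the claim is incomplete.
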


\paragraph{\bf Proof of Claim~\ref{claim:obstruction}.}\quad
Let $\pi_{C/S}(\alpha)\in H^0(C,N_{S/V}\big{\vert}_C)$ 
be the exterior component of $\alpha$.
There exists a commutative diagram
$$
\begin{CD}
  0 @>>> A^1(C,S) @>{p_1}>> H^0(C,N_{C/V}) @>>> H^1(S,-D) @>>> 0 \\
  @. @V{p_2}VV @V{\pi_{C/S}}VV \Vert \\
  0 @>>> H^0(S,N_{S/V}) @>{\rho}>> H^0(C,N_{S/V}\big{\vert}_C) @>{\cup \mathbf k_C}>> H^1(S,-D) @>>> 0,
\end{CD}
$$
where $\mathbf k_C$ is the extension class of \eqref{ses:CE}.
By this diagram, we see that
$\pi_{C/S}(\alpha)$ is not contained in $\im \rho$, and hence
$\pi_{C/S}(\alpha)\cup \mathbf k_C\ne 0$ in $H^1(S,-D)$.
On the other hand, since $H^1(S,-D+E)=0$,
we have $\overline{\pi_{C/S}(\alpha)}_{(1)}\cup \mathbf k_C=0$
and $\pi_{C/S}(\alpha)$ lifts to an infinitesimal deformation 
$\beta \in H^0(S,N_{S/V}(E))$ with a pole along $E$
by Lemma~\ref{lem:restriction to C and E}(1).
Then by (2) of the same lemma,
the principal part $\beta\big{\vert}_E$ of $\beta$
is a nonzero global section of $N_{S/V}(E)\big{\vert}_E$,
and its divisor of zeros contains $Z:=C\cap E$
(i.e. $Z \subset \div (\beta\big{\vert}_E)$).

Now we verify that the four conditions (a), (b1), (b2) and (b3) of 
Corollary~\ref{cor:obstruction} are satisfied.
Put $\Delta=C+K_V\big{\vert}_S-2E$, a divisor on $S$.
Since $H^1(S,\Delta-E)=H^1(S,D-3E)=0$, 
(a) follows from the exact sequence
$0 \rightarrow \mathcal O_S(\Delta-E)
\rightarrow \mathcal O_S(\Delta)
\rightarrow \mathcal O_E(\Delta)
\rightarrow 0.
$
(b1) is clear.
Since $E$ is a $(-2)$-curve, we compute that 
$\Delta.E=(D-2E.E)=2=2g(E)-2-2E^2$, and hence (b2) follows. 
Then by \eqref{isom:triviality}, this implies that 
$N_{S/V}(E-C)\big{\vert}_E$ is trivial and hence
we have $Z =\div (\beta\big{\vert}_E)$.
Finally, for (b3), we show that $H^1(S,C-E)=0$.
In fact, we have $C-E=D-E-K_V\big{\vert}_S$.
Since $H^1(S,D-E)=0$, 
we have $(D-E.E')\ge -1$ for any $(-2)$-curve $E'$ on $S$
by Lemma~\ref{lem:K3}(2), which implies that $C-E$ is nef
because $-K_V\big{\vert}_S$ is ample. Then $C-E$ is big by
$$
(C-E)^2=(D-E)^2+2(D-E.-K_V\big{\vert}_S)+(-K_V\big{\vert}_S)^2
>(D-E)^2>0.
$$
Therefore $H^1(S,C-E)=0$ by the Kodaira-Ramanujam vanishing theorem.
Then the rational map
$$
|C| \dashrightarrow |\mathcal O_E(C)|, \qquad C' \longmapsto Z=C'\cap E
$$
is dominant. By assumption, 
$\Lambda'=\left\{\div(\gamma) \bigm|
\gamma \in \im \pi_{E/S}(E)
\right\}$ 
is a proper linear subsystem of $\Lambda=|N_{S/V}(E)\big{\vert}_E|$.
Therefore, if necessary, by replacing $C$ with a general member $C'$
of $|C|$, we may assume that
$Z=C\cap E$ is not contained in $\Lambda'$.
In fact, by the upper semicontinuity, if a general member $C'\in |C|$
is obstructed, then so is $C$. Hence (b3) follows.
By Corollary~\ref{cor:obstruction}, we have proved the claim.

(3)\quad
The proof is very similar to that of (2).
Suppose that $D\sim mF$ for $m \ge 2$ and an elliptic curve $F$.
Then by Lemma~\ref{lem:K3}(2), we have $H^1(S,-D)\simeq k^{m-1}$.
Thus the cokernel of the tangent map $p_1$ of $pr_1$ is nonzero.
Since $H^1(S,-D+F)\simeq k^{m-2}$, the kernel of 
the natural map $H^1(S,-D)\rightarrow H^1(S,-D+F)$ 
is of dimension at least one.
Hence by \eqref{ses:flag to hilb}, 
there exists a global section $\alpha$ of $N_{C/V}$
whose exterior component $\pi_{C/S}(\alpha)$ satisfies
$\pi_{C/S}(\alpha)\cup \mathbf k_C \ne 0$ in $H^1(S,-D)$, while
$\overline{\pi_{C/S}(\alpha)}_{(1)}\cup \mathbf k_C=0$ in $H^1(S,-D+F)$.
We fix such a global section $\alpha$
and prove that $\ob_{S}(\alpha)\ne 0$.
Again by Lemma~\ref{lem:restriction to C and E},
there exists an infinitesimal deformation 
$\beta \in H^0(S,N_{S/V}(F))$ with a pole along $F$
such that
$\beta\big{\vert}_C = \overline{\pi_{C/S}(\alpha)}_{(1)}$ and
$\beta\big{\vert}_F$ 
is a nonzero global section of $N_{S/V}(F)\big{\vert}_F$.
Thus it suffices to verify the conditions (a), (b1), (b2) and (b3) of
Corollary~\ref{cor:obstruction}.
(a) follows from $\Delta=(m-2)F$ and $\mathcal O_F(F)\simeq \mathcal O_F$.
(b1) is clear.
(b2) follows from $\Delta.F=2g(F)-2-2F^2=0$.
Since $C-F=-K_V\big{\vert}_S+(m-1)F$ is ample, we have $H^1(S,C-F)=0$
by the Kodaira vanishing theorem. Hence (b3) follows.
The rest of the proof is same as that of (2).
\qed

\paragraph{{\bf Proof of Corollary~\ref{cor:main3}}.}\quad
Let $W_{C,S}$ be the $S$-maximal family of curves containing $C$.
If $H^1(S,D)=0$, then by Lemma~\ref{lem:flag to hilb}(1), 
$pr_1$ is smooth.
Since a smooth morphism is flat and a flat morphism maps a 
generic point onto a generic point, we have the conclusion of (a)
(cf.~\cite[Corollary 1.3.5]{Kleppe81}).
Suppose that $h^1(S,D)=1$.
Then it follows from \eqref{ses:flag to hilb} that
$$
\dim W_{C,S} 
\le \dim_{[C]} \Hilb^{sc} V
\le h^0(C,N_{C/V})
=\dim W_{C,S}+1.
$$
Since $C$ is obstructed by the theorem,
we have $\dim_{[C]} \Hilb^{sc} V=\dim W_{C,S}$.
Therefore (a) follows.
Since $C$ is a generic member of $W_{C,S}$, we obtain (b).
If $H^0(S,-D)=0$, then by Lemma~\ref{lem:flag to hilb}(2)
and Lemma~\ref{lem:flag of k3fano},
we obtain $\dim_{[C]} \Hilb^{sc} V=
\dim W_{C,S} = (-K_V\big{\vert}_S)^2/2+g+1$.
Thus we have completed the proof.
\qed

\section{Non-reduced components of the Hilbert scheme}
\label{sect:non-reduced}

In this section, as an application, 
we study the deformations of curves 
lying on a smooth quartic surface $S$ in $\mathbb P^3$, 
or a smooth hyperplane section $S$
of a smooth quartic $3$-fold $V_4 \subset \mathbb P^3$ (assuming $\car k=0$).
We give some examples of generically non-reduced components
of the Hilbert schemes $\Hilb^{sc} \mathbb P^3$
and $\Hilb^{sc} V_4$ 
(cf.~Examples~\ref{ex:non-reduced V_4} and \ref{ex:non-reduced P^3}).
As is well known, $S$ is a $K3$ surface.
Here we consider $S$ 
(i) of Picard number two ($\rho(S)=2$),
(ii) containing a \underline{smooth} curve $E$
not a complete intersection in $S$, 
and such that 
(iii) $\Pic S$ is generated by $E$ and the class $\mathbf h$ 
of hyperplane sections of $S$
(i.e., $\Pic S \simeq \mathbb Z \mathbf h \oplus \mathbb Z E$).
Kleppe and Ottem \cite{Kleppe-Ottem} have studied
the deformations of space curves $C \subset \mathbb P^3$
lying on such a quartic surface $S$
by assuming that $E$ is a line, or a conic.
They have also produced examples of
generically non-reduced components of $\Hilb^{sc} \mathbb P^3$
by a different method (cf.~Remark~\ref{rmk:Kleppe-Ottem}).

\subsection{Mori cone of quartic surfaces}
\label{subsec:mori cone of quartics}

Let $S \subset \mathbb P^3$ be a smooth quartic surface.
If $S$ is general, then we have $\rho(S)=1$
and $\Pic S$ is generated by 
$\mathbf h=-\frac 14 K_{\mathbb P^3}\big{\vert}_S$.
Then every curve $C$ on $S$
is a complete intersection of $S$ with some other surface in $\mathbb P^3$, 
and hence $C$ is arithmetically Cohen-Macaulay.
Thus we see that $C$ is unobstructed,
thanks to a result of Ellingsrud~\cite{Ellingsrud}.

First we consider a quartic surface $S$ containing 
a (smooth) rational curve $E$.
Then by its genus, $E$ is not a complete intersection in $S$.
It follows from Lemma~\ref{lem:mori} that for every integer $e\ge 1$, 
there exists a smooth quartic surface
$S \subset \mathbb P^3$ containing a rational curve $E$ of degree $e$,
and such that $\Pic S \simeq \mathbb Z \mathbf h \oplus \mathbb Z E$.
Let $(S,E)$ be such a pair of 
a surface $S$
and a curve $E\simeq \mathbb P^1$. 
Then every divisor $D$ on $S$ is linearly equivalent to
$x\mathbf h-yE$ for some $x,y \in \mathbb Z$.
Since we have $\mathbf h^2=4, \mathbf h.E=e$ and $E^2=-2$,
we compute the self intersection number of $D$ on $S$
as $D^2=4x^2-2exy-2y^2$.
Recall that for a projective surface $X$,
the effective cone 
$\eff(X)$ of $X$ is defined as
$\eff(X)=\left\{
  \sum_{i=1}^n a_i[C_i] \bigm| 
  \mbox{$C_i$ is an irreducible curve on $X$ and $a_i \in \mathbb R_{\ge 0}$}
\right\}$
and the Mori cone $\mori(X)$ is defined as the closure of $\eff(X)$
in $\ns(X)_{\mathbb R}$.
Kov\'acs~\cite{Kovacs} proved that for every $K3$ surface $X$ 
with $\rho(X)=2$,
$\mori(X)$ has two extremal rays, which can be 
generated by the classes of two $(-2)$-curves, one $(-2)$-curve and
an elliptic curve, two elliptic curves, or two non-effective classes
$x_1,x_2$ with $x_i^2=0$.
Applying Kov\'acs's result, we have the following lemma.

\begin{lem}
  \label{lem:mori cone1}
  Let $S$ be a smooth quartic surface, 
  $E$ a smooth rational curve of degree $e \ge 2$ on $S$ such that
  $\Pic S=\mathbb Z\mathbf h\oplus \mathbb ZE$,
  $D$ a divisor on $S$.
  Then 
  \begin{enumerate}
    \item If $D$ is not linearly equivalent to $0$, then $D^2\ne 0$.
    \item There exists a (unique) $(-2)$-curve $E'$ on $S$ such that
    $\mori(S)=\mathbb R_{\ge 0}[E] +\mathbb R_{\ge 0}[E']$.
    \item $D$ is nef
    if and only if $D.E\ge 0$ and $D.E'\ge 0$.
    \item Suppose that $D\ge 0$ and $D \ne 0$. Then
    a general member $C$ of $|D|$ is a smooth connected curve
    if and only if
    $D$ is nef, $D=E$, or $D=E'$.
    \item If (i) $D\ge 0$ and $D$ is nef or 
    (ii) $D=E,E'$, then $H^1(S,D)=0$.
  \end{enumerate}
\end{lem}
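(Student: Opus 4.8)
The plan is to establish the five assertions using the explicit intersection form on $\Pic S = \mathbb Z\mathbf h \oplus \mathbb Z E$ (so $\mathbf h^2 = 4$, $\mathbf h.E = e$, $E^2 = -2$), the description of $\mori(S)$ for a $K3$ surface of Picard number two due to Kov\'acs~\cite{Kovacs}, and the results on $K3$ linear systems recorded in Lemma~\ref{lem:K3}; it is convenient to prove them in the order (1), (2)\&(3), (5), (4), since (1) feeds into all the others and (4) uses (5). For (1), write $D \sim x\mathbf h - yE$, so that $D^2 = 2(2x^2 - exy - y^2)$; the binary quadratic form $2x^2 - exy - y^2$ has discriminant $e^2 + 8$ and hence represents $0$ nontrivially over $\mathbb Q$ if and only if $e^2 + 8$ is a perfect square. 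Since $e^2 < e^2 + 8 < (e+2)^2$ for $e \ge 2$, the only candidate is $(e+1)^2$, which would force $2e + 1 = 8$; impossible. Thus $D^2 = 0$ only when $D \sim 0$, and in particular $S$ carries no nonzero class of self-intersection $0$.

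For (2): since $\rho(S) = 2$, $\mori(S)$ has exactly two extremal rays, each (by \cite{Kovacs}) generated by a $(-2)$-curve, an elliptic curve, or a non-effective isotropic class; the latter two are ruled out by (1), so both rays are spanned by $(-2)$-curves. One of them is $\mathbb R_{\ge 0}[E]$, because an irreducible curve of negative self-intersection always spans an extremal ray of the Mori cone; write the other as $\mathbb R_{\ge 0}[E']$ with $E'$ a $(-2)$-curve. Uniqueness of $E'$ follows since two distinct reduced irreducible curves meet non-negatively (so cannot be numerically proportional) and on a $K3$ surface numerical equivalence of divisors with $h^0 = 1$ coincides with linear equivalence. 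Assertion (3) is then immediate: a class is nef iff it is non-negative on the two generators $[E]$, $[E']$ of the extremal rays of $\mori(S)$.

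For (5): if $D = E$ or $E'$, then from $0 \to \mathcal O_S \to \mathcal O_S(D) \to \mathcal O_{\mathbb P^1}(-2) \to 0$, using $H^1(\mathcal O_S) = 0$, $H^2(\mathcal O_S(D)) \simeq H^0(\mathcal O_S(-D))^{\vee} = 0$, and $h^1(\mathbb P^1, \mathcal O_{\mathbb P^1}(-2)) = 1 = h^2(\mathcal O_S)$, the connecting map $H^1(\mathbb P^1,\mathcal O_{\mathbb P^1}(-2)) \to H^2(\mathcal O_S)$ is an isomorphism, so $H^1(S, D) = 0$. If $D \ge 0$ is nef, then either $D \sim 0$ (and $H^1(\mathcal O_S) = 0$) or $D^2 > 0$ by (1); in the latter case $D$ is nef and big, so $H^1(S, D) = H^1(S, K_S + D) = 0$ by the Kodaira--Ramanujam vanishing theorem ($\car k = 0$). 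Alternatively, Lemma~\ref{lem:K3}(2) applies, both of whose criteria for nonvanishing require either an isotropic class or a nef class meeting an effective $(-2)$-class negatively, neither of which occurs here.

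For (4): if $D = E$ or $E'$ then $h^0(\mathcal O_S(D)) = 1$ (same sequence), so $|D|$ is the single smooth rational curve $D$; if $D \ge 0$ is nef and nonzero then $D^2 > 0$ by (1), hence $D$ is big, the base-point criterion of Lemma~\ref{lem:K3}(1) fails for lack of an isotropic class, so $|D|$ is base-point-free, a general member $C$ is smooth by Bertini, and connected since $h^0(\mathcal O_C) = 1 + h^1(S, D) = 1$ by (5). Conversely, suppose a general $C \in |D|$ is smooth and connected — hence reduced and irreducible — but $D$ is not nef; by (3) then $D.E' < 0$ or $D.E < 0$, say $D.E' < 0$ (the other case is identical with $E$ in place of $E'$). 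Then every member of $|D|$ contains the irreducible curve $E'$, so $D - E' \ge 0$; iterating, $D \sim kE' + M$ with $k \ge 1$, $M \ge 0$, and a general member of $|D|$ equals $kE' + M'$ with $M' \in |M|$, which can be reduced and irreducible only if $k = 1$ and $M = 0$; thus $D \sim E'$, i.e. $D = E'$. The only genuinely substantive inputs are (1) — which powers both the exclusion of elliptic and isotropic rays in (2) and the bigness used in (4) and (5) — and the ``only if'' direction of (4), where the point is to peel the negative curve $E'$ off $|D|$ as a fixed component; everything else is standard exact-sequence bookkeeping on a $K3$ surface.
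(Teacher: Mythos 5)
Your proof is correct and follows essentially the same route as the paper: the same discriminant computation for (1), Kov\'acs's description of $\mori(S)$ for (2), cone duality for (3), Saint-Donat base-point-freeness plus Bertini for (4), and Kodaira--Ramanujam for (5). The only (cosmetic) differences are that you rule out $e^2+8$ being a square by squeezing it between $e^2$ and $(e+2)^2$ rather than factoring $k^2-e^2=8$, and you prove the ``only if'' of (4) by peeling the negative curve off as a fixed component rather than by the contrapositive that an irreducible curve other than $E,E'$ is nef.
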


\begin{proof}
Since $E$ spans one of the two extremal rays, for proving (1) and (2),
it suffices to prove that there exist no elliptic curves on $S$.
Suppose that there exists a nonzero divisor $D\sim x\mathbf h-yE$ on $S$
with $D^2=-2(y^2+exy-2x^2)=0$.
Then the discriminant $d=(ex)^2-4(-2x^2)=(8+e^2)x^2$ of this
quadratic equation (with a variable $y$)
equals a power of an integer.
Since $D\not\sim 0$ in $\Pic S$, we have $8+e^2=k^2$ for some integer $k \ge 1$.
By solving this equation, we have $(k,e)=(3,1)$, or $(9/2,7/2)$,
which are both impossible by assumption.
Since the nef cone and the Mori cone are dual to each other, 
we have (3).
If $C \in |D|$ is irreducible and $C \ne E,E'$, then $C$ is nef.
Conversely, if $D$ is nef, then
$|D|$ is base point free by Lemma~\ref{lem:K3}(1),
and a general member $C \in |D|$
is a smooth connected curve by Bertini's theorem.
Hence (4) follows. (5) is clear if $D\sim 0$, $E$, or $E'$.
Otherwise it follows from (1) and the Kodaira-Ramanujam vanishing theorem.
\end{proof}

\begin{rmk}
  \begin{enumerate}
    \item If $e=1$, then $\mori(S)$ is generated by the classes
    of a line $E$ and a smooth elliptic curve
    $F\sim \mathbf h -E$ of degree $3$,
    contained in a plane in $\mathbb P^3$
    (cf.~\cite[Proposition~5.1]{Kleppe-Ottem}).
    \item Once $e\ge 2$ is given, the class $x\mathbf h-yE$ of 
    the curve $E'\simeq \mathbb P^1$ spanning the other ray
    of $\mori(S)$
    can be explicitly computed as the minimal nonzero solution $(x,y)$ 
    of the equation
    $$
    y^2+exy-2x^2=1,
    $$
    which is equivalent to $(E')^2=-2$.
    If $e$ is even, i.e., $e=2m$ for $m \ge 1$, 
    then we have $E'=mh-E$.
    If $e$ is odd, then we see that 
    $y$ is odd and $x$ is even ($x=2x'$), and 
    the above equation is reduced to
    the Pell equation
    $X^2-(e^2+8)Y^2=1$, by putting $X:=y+ex'$ and $Y:=x'$.
    Thus the class $x\mathbf h-yE$ of $E'$ in $\Pic S$ is 
    obtained as the minimal solution of this Pell equation
    and computed as in Table~\ref{table:class of E'}.
    \begin{table}[h]
      \begin{center}
	\begin{tabular}{|c|c|c|c|c|c|c|c|c|c|}
	  \hline
	  $e$ & 2 & 3 & 4 & 5 & 6 & 7 & 8 & 9 & $\cdots$ \\
	  \hline
	  $(x,y)$ & $(1,1) $ & $(16,9)$ & $(2,1)$ & $(8,3)$ & $(3,1)$ & $(40,11)$ & $(4,1)$ & $(106000,23001)$ & $\cdots$ \\
	  \hline
	  $d(E')$ & $2$ & $37$ & $4$ & $17$ & $6$ & $83$ & $8$ & $216991$ & $\cdots$ \\
	  \hline
      \end{tabular}
      \end{center}
      \caption{Classes of $E'$}
      \label{table:class of E'}
    \end{table}
  \end{enumerate}
  \label{rmk:class of E'}
\end{rmk}

Secondly, we consider elliptic quartic surfaces.
It follows from Lemma~\ref{lem:mori} that for every integer $e\ge 3$, 
there exists a smooth quartic surface
$S \subset \mathbb P^3$ containing a smooth elliptic curve $F$ of degree
$e$ such that $\Pic S \simeq \mathbb Z \mathbf h \oplus \mathbb Z F$.

\begin{lem}
 \label{lem:mori cone2}
 Let $S$ be a smooth quartic surface, 
 $F$ a smooth elliptic curve of degree $e \ge 4$ on $S$ such that
 $\Pic S=\mathbb Z\mathbf h\oplus \mathbb ZF$,
 $D$ a divisor on $S$.
 Then 
 \begin{enumerate}
  \item $D^2\ne -2$. In particular, there exists no $(-2)$-curve on $S$.
  \item There exists a smooth elliptic curve $F'$ on $S$ such that
  $\mori(S)=\mathbb R_{\ge 0}[F] +\mathbb R_{\ge 0}[F']$.
  Moreover, the class of $F'$ in $\Pic S$ equals
  $e\mathbf h-2F$ if $e$ is odd, and
  $(e/2)\mathbf h-F$ otherwise.
  \item $D$ is nef if and only if $D \ge 0$.
  \item Suppose that $D \ge 0$ and $D\ne 0$.
  Then the following are equivalent:
  (i) every general member $C$ of $|D|$ is a smooth connected curve,
  (ii) $D \not\sim kF$ and $D \not\sim kF'$ for $k \ge 2$,
  (iii) $H^1(S,D)= 0$
 \end{enumerate}
\end{lem}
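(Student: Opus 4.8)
The plan is to mimic the proof of Lemma~\ref{lem:mori cone1}, with the elliptic curve $F$ playing the role that the $(-2)$-curve $E$ plays there. Write an arbitrary divisor class on $S$ as $D\sim x\mathbf h+yF$ with $x,y\in\mathbb Z$; since $\mathbf h^2=4$, $\mathbf h.F=e$ and $F^2=2g(F)-2=0$, one has $D^2=2x(2x+ey)$. For (1) I would solve $2x(2x+ey)=-2$ over $\mathbb Z$: this forces $x=\pm 1$ and $2x+ey=\mp 1$, hence $ey=\mp 3$, which is impossible when $e\ge 4$. Thus no class on $S$ has self-intersection $-2$; a fortiori $S$ contains no $(-2)$-curve.

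Next I would establish that $\mori(S)$ equals the closed positive cone $\overline{\operatorname{Pos}}(S)=\{\xi:\xi^2\ge 0,\ \xi.\mathbf h\ge 0\}$. Indeed, since $S$ has no $(-2)$-curve, every irreducible curve has non-negative self-intersection by adjunction, so $\eff(S)\subset\overline{\operatorname{Pos}}(S)$; conversely Riemann--Roch on the $K3$ surface $S$, together with $h^2(\mathcal O_S(\xi))=h^0(\mathcal O_S(-\xi))$, shows that every integral class in the interior of $\overline{\operatorname{Pos}}(S)$ is effective, and taking limits gives $\mori(S)=\overline{\operatorname{Pos}}(S)$ (alternatively one may cite Kov\'acs and use (1) and the effectivity of $F$ to exclude the remaining cases of his list). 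The two extremal rays of $\overline{\operatorname{Pos}}(S)$ are the isotropic rays; one is $\mathbb R_{\ge 0}[F]$, and the primitive isotropic class on the other ray with positive intersection with $\mathbf h$ is obtained by solving $2x+ey=0$ with $(x,y)$ primitive, giving $F'\sim(e/2)\mathbf h-F$ for $e$ even and $F'\sim e\mathbf h-2F$ for $e$ odd. Riemann--Roch yields $\chi(\mathcal O_S(F'))=2$ and $h^2(\mathcal O_S(F'))=h^0(\mathcal O_S(-F'))=0$, so $F'$ is effective; since $S$ has no $(-2)$-curve, Lemma~\ref{lem:K3}(1) shows $|F'|$ is base-point free, hence (as $\car k=0$) defines an elliptic fibration $S\to\mathbb P^1$ whose general fibre is a smooth connected elliptic curve in $|F'|$; replacing $F'$ by such a fibre proves (2). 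For (3), an effective divisor is a non-negative combination of irreducible curves, each nef because $S$ has no $(-2)$-curve, so $D\ge 0$ implies $D$ nef; conversely the nef cone is dual to $\mori(S)=\overline{\operatorname{Pos}}(S)$, which is self-dual, so a nonzero nef $D$ lies in $\overline{\operatorname{Pos}}(S)$, and it is effective by Riemann--Roch when $D^2>0$, while if $D^2=0$ it lies on an isotropic ray, forcing $D\sim kF$ or $kF'$ with $k\ge 1$, again effective.

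For (4), the equivalence (ii)$\Leftrightarrow$(iii) is read off from Lemma~\ref{lem:K3}(2): by (3) $D$ is nef with $D^2\ge 0$, the first alternative there cannot occur since no class has square $-2$ by (1), and the only nef primitive effective isotropic classes are $F$ and $F'$, so $H^1(S,D)\ne 0$ exactly when $D\sim kF$ or $kF'$ with $k\ge 2$. The implication (i)$\Rightarrow$(ii) follows because if $D\sim kF$ (resp.\ $kF'$) with $k\ge 2$ then $|D|$ is the pullback of $|\mathcal O_{\mathbb P^1}(k)|$ under the elliptic fibration, so its general member is a disjoint union of $k$ smooth fibres and hence disconnected. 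For (ii)$\Rightarrow$(i): if $D^2>0$ then $|D|$ is base-point free by Lemma~\ref{lem:K3}(1), a general member $C$ is smooth by Bertini, and connected because $h^1(S,D)=0$ (by (ii)$\Rightarrow$(iii)) and Serre duality give $h^0(\mathcal O_C)=1$ from $0\to\mathcal O_S(-C)\to\mathcal O_S\to\mathcal O_C\to 0$; if $D^2=0$ then $D$ is effective on an isotropic ray, so $D\sim kF$ or $kF'$, and (ii) forces $k=1$, i.e.\ $D$ is a smooth connected elliptic curve.

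The main obstacle is part (2): pinning down the exact class of $F'$ in $\Pic S$ and verifying it is represented by a genuine smooth irreducible elliptic curve (this uses the base-point-freeness criterion of Lemma~\ref{lem:K3}(1) and the hypothesis $\car k=0$), together with the structural fact that the second extremal ray of $\mori(S)$ is isotropic rather than spanned by a $(-2)$-class — which is precisely where the hypothesis $e\ge 4$ enters, through (1). Once (1) and (2) are in hand, parts (3) and (4) are formal consequences of them and of Lemma~\ref{lem:K3}.
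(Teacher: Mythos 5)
Your proof is correct, and its numerical core coincides with the paper's: both derive $D^2=2x(2x\pm ey)$ from $\mathbf h^2=4$, $\mathbf h.F=e$, $F^2=0$, rule out $D^2=-2$ by the same divisibility argument ($e\mid 3$ would be forced, impossible for $e\ge 4$), and locate the second isotropic ray by solving $2x\pm ey=0$ for a primitive class. Where you diverge is in the structural input: the paper obtains the shape of $\mori(S)$ and the existence of the elliptic curve $F'$ by citing Kov\'acs's classification of Mori cones of $K3$ surfaces of Picard number $2$ (part (1) and the effectivity of $F$ eliminate all cases of his list except ``two elliptic curves''), and it handles part (4) by quoting Saint-Donat's Proposition 2.6 for nef divisors with no fixed component. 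You instead argue more elementarily that, in the absence of $(-2)$-classes, $\mori(S)$ equals the self-dual closed positive cone (Riemann--Roch gives effectivity of interior classes, adjunction gives $C^2\ge 0$ for every irreducible curve), then get effectivity and base-point-freeness of $F'$ from Riemann--Roch and Lemma~\ref{lem:K3}(1), and prove (4) via Lemma~\ref{lem:K3}(2), Bertini, and the connectedness computation $h^0(\mathcal O_C)=1+h^1(S,-D)$. This buys a self-contained treatment at the cost of a little more work; the paper's route is shorter but leans on two external references. Both are sound, and you even flag the Kov\'acs shortcut as an alternative.
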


\begin{proof}
If $D\sim x\mathbf h-yF$ with $x,y \in \mathbb Z$, then
we have $D^2=2x(2x-ye)$ by $h^2=4, h.F=e$ and $F^2=0$.
The Diophantine equation $x(2x-ye)=-1$ on $(x,y)$
has no solutions if $e\ne 1,3$. 
Thus we obtain (1).
Since $F$ spans one of the extremal ray, 
by Kov\'acs's result, the other ray is spanned by the class of 
a smooth elliptic curve $F'$ on $S$.
In fact, by solving the equation $x(2x-ye)=0$,
we see that if $D^2=0$ and $D$ is not spanned by $F$,
then $D$ is spanned by the (primitive) classes given in the lemma.
Thus we obtain (2).
(3) follows from (2),
because $\mori(S)$ is dual to itself.
For (4), suppose that $D \ge 0$ and $D\ne 0$.
Then $|D|$ has no fixed component.
Hence by \cite[Prop.~2.6]{Saint-Donat}, if $D^2>0$, then
every general member $C$ of $|D|$ is a smooth connected curve and $h^1(S,D)=0$,
and otherwise
$D$ is a multiple $kF$ ($k \ge 1$) of a smooth elliptic curve $F$
on $S$ and $h^1(S,D)=k-1$.
\end{proof}

Finally, we consider quartic surfaces without
$(-2)$-curves nor elliptic curves.
Let $S$ be a smooth quartic surface not containing
any $(-2)$-curves nor smooth elliptic curves.
Then $\eff(S)$ coincides with the ample cone $\amp(S)$ of $S$.
In particular, $H^1(S,D)$ vanishes for every effective divisor $D$ on $S$.
If $\rho(S)=2$, then 
$\mori(S)=\mathbb R_{\ge 0}[x_1] +\mathbb R_{\ge 0}[x_2]$
for two non-effective classes $x_i$ ($i=1,2$) with $x_i^2=0$.
We give an example of such a quartic surface.

\begin{ex}
  \label{ex:quartic without P^1's nor elliptics}
  It follows from Lemma~\ref{lem:mori} that
  there exists a smooth quartic surface $S \subset \mathbb P^3$
  and a smooth connected curve $\Gamma$ on $S$
  of degree $6$ and genus $2$
  such that $\Pic(S)=\mathbb Z \mathbf h \oplus \mathbb Z \Gamma$.
  Then on $S$ there exist no divisors $D$
  with $D^2=-2$ and no divisors $D$ with $D^2=0$ and $D \not\sim 0$.
  In fact, let $D\sim x\mathbf h-y\Gamma$ for $x,y \in \mathbb Z$.
  Then we have $D^2=4x^2-12xy+2y^2=(2x-3y)^2-7y^2$.
  We have $D^2\ne -2$, because
  $-2\equiv 5$ is not a quadratic residue modulo $7$.
  If $D\not\sim 0$, then we have $D^2\ne 0$ as well.
  Thus we conclude that there exist no $(-2)$-curves and no 
  smooth elliptic curves on $S$.
\end{ex}

\subsection{Hilbert schemes of $\mathbb P^3$ and $V_4$}
\label{subsec:hilb}

Let $V$ be $\mathbb P^3$ or a smooth quartic $3$-fold 
$V_4 \subset \mathbb P^4$,
$S$ a smooth quartic surface in $\mathbb P^3$
or a smooth hyperplane section of $V_4$.
It is known that if $S$ is general 
(in $|\mathcal O_{\mathbb P^3}(4)|$ or in $|\mathcal O_{V_4}(1)|$),
the Picard group of $S$ is generated by the class $\mathbf h$
of hyperplane sections of $S$ (see e.g.~\cite{Moisezon} for $V=V_4$).
Let $C$ be a smooth connected curve of degree 
$d$ ($=C.\mathbf h$) and genus $g$ in $S$,
not a complete intersection in $S$.
Then there exists a first order deformation $\tilde S$ of $S$
not containing any first order deformation $\tilde C$ of $C$.
Then by Lemma~\ref{lem:flag of k3fano},
$\HF V$ is nonsingular at $(C,S)$ of dimension 
$a^1(C,S)=(-K_V\big{\vert}_S)^2/2+g+1$.
Since $K_{\mathbb P^3}\big{\vert}_S=-4\mathbf h$
and $K_{V_4}\big{\vert}_S=-\mathbf h$,
the number $a^1(C,S)$ equals $g+33$ if $V=\mathbb P^3$, and $g+3$ if $V=V_4$.
Let $W_{C,S} \subset \Hilb^{sc} V$ be 
the $S$-maximal family of curves containing $C$ 
(cf.~Definition~\ref{dfn:S-maximal}).
Then if $d>16$ (resp. $d>4$), 
then we have $H^0(S,N_{S/V}(-C))=0$ and hence
$\dim W_{C,S}=a^1(C,S)$.
In what follows, we assume that $d >16$ if $V=\mathbb P^3$
and $d>4$ if $V=V_4$.

\begin{thm}
  \label{thm:Hilb with rational curve}
  Let $V=\mathbb P^3$ or $V=V_4$, and let $S,C$ be as above.
  Suppose that there exists a smooth rational curve $E$ 
  of degree $e \ge 2$ on $S$ 
  such that $\Pic S=\mathbb Z\mathbf h\oplus \mathbb ZE$.
  Let $E'$ be as in Lemma~\ref{lem:mori cone1} and suppose that
  $D:=C+K_V\big{\vert}_S$ is effective.
  \begin{enumerate}
    \item If $D$ is nef, or $D=E,E'$, then 
    $W_{C,S}$ is a generically smooth component of $\Hilb^{sc} V$.
    \item Suppose that $N_{E/V}$ is globally generated if $V=V_4$.
    If $D.E=-2$ and $D\ne E$, then
    $W_{C,S}$ is a generically non-reduced component 
    of $\Hilb^{sc} V$.
  \end{enumerate}
\end{thm}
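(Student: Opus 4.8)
The plan is to derive both parts directly from Theorem~\ref{thm:main2} and Corollary~\ref{cor:main3}, the only substantive work being to check the numerical hypotheses of Theorem~\ref{thm:main2} by means of the description of $\mori(S)$ in Lemma~\ref{lem:mori cone1}. Since $V=\mathbb P^3$ or $V_4$ is a smooth Fano $3$-fold, $S$ is $K3$, $C\subset S$ is a smooth connected curve which is not a complete intersection in $S$ and hence (as recalled just before the theorem) admits a first order deformation $\tilde S$ of $S$ containing no $\tilde C$, and $D=C+K_V\big{\vert}_S\ge 0$ by hypothesis, the standing assumptions of Theorem~\ref{thm:main2} hold throughout. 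In each case it then remains to verify the remaining conditions of Theorem~\ref{thm:main2}(1) or (2), determine $h^1(S,D)$, and invoke Corollary~\ref{cor:main3}(a),(b).

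\emph{Part (1).} When $D$ is nef, or $D=E$, or $D=E'$, Lemma~\ref{lem:mori cone1}(5) gives $H^1(S,D)=0$, so Theorem~\ref{thm:main2}(1) shows that $\Hilb^{sc}V$ is nonsingular at $[C]$. Since $h^1(S,D)=0\le 1$, Corollary~\ref{cor:main3}(a) shows $W_{C,S}$ is an irreducible component of $(\Hilb^{sc}V)_{\red}$, and part (b) of the same corollary (case $h^1=0$) shows $\Hilb^{sc}V$ is generically smooth along it; that is, $W_{C,S}$ is a generically smooth component.

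\emph{Part (2).} Here $E\simeq\mathbb P^1$, so $E^2=-2$ and $E$ is a $(-2)$-curve. I will first establish the numerical conditions of Theorem~\ref{thm:main2}(2). Writing $D\sim a\mathbf h-yE$ in $\Pic S$ and using $\mathbf h^2=4$, $\mathbf h.E=e$, $E^2=-2$ and $D.E=-2$, one gets $2y=-(ae+2)$; the positivity $D.\mathbf h>0$ (as $\mathbf h$ is ample and $D\ne 0$) together with $D\ne E$ then forces $a\ge 1$, whence $D^2=\tfrac12 a^2(e^2+8)-2\ge \tfrac12(e^2+8)-2\ge 4$, so in particular $D^2\ge 0$. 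For $H^1(S,D-3E)=0$ one computes $(D-3E).E=D.E-3E^2=4$ and splits into two cases. If $ae\ge 4$, then $D-3E\sim a\mathbf h+\tfrac12(ae-4)E$ is a nonnegative combination of the ample class $\mathbf h$ and $E$, so $(D-3E).E'>0$ as well and $D-3E$ is nef by Lemma~\ref{lem:mori cone1}(3); since moreover $(D-3E)^2=D^2-6>0$, it is nef and big, and $H^1(S,D-3E)=0$ by the Kodaira--Ramanujam vanishing theorem. The only remaining possibility (integrality of $y$ forces $e$ even whenever $a=1$) is $a=1$, $e=2$, in which case $D-3E\sim\mathbf h-E=E'$ and $H^1(S,E')=0$ again by Lemma~\ref{lem:mori cone1}(5). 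Thus Theorem~\ref{thm:main2}(2) applies provided $\pi_{E/S}(E)$ is not surjective, which holds by Lemma~\ref{lem:non-surjective}(a) when $V=\mathbb P^3$ (then $V\simeq\mathbb P^n$ and $E$ is rational) and by Lemma~\ref{lem:non-surjective}(b) when $V=V_4$ (then $E$ is rational with $N_{E/V}$ globally generated, as assumed). Hence $h^1(S,D)=1$ and $\Hilb^{sc}V$ is singular at $[C]$; as $h^1(S,D)=1\le 1$, Corollary~\ref{cor:main3}(a),(b) (case $h^1=1$) show $W_{C,S}$ is a component of $(\Hilb^{sc}V)_{\red}$ along which $\Hilb^{sc}V$ is generically non-reduced.

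The main obstacle is the case analysis yielding $H^1(S,D-3E)=0$ (together with $D^2\ge 0$): it rests on the facts, extracted from Lemma~\ref{lem:mori cone1}, that the only irreducible $(-2)$-curves on $S$ are $E$ and $E'$ and that in the single boundary case $a=1$, $e=2$ — where $(D-3E)^2=-2$ and the vanishing theorem is unavailable — the divisor $D-3E$ is itself the $(-2)$-curve $E'$. Everything else is a formal consequence of the quoted results, so the write-up will be short once this computation is carried out.
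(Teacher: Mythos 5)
Your proposal is correct and follows essentially the same route as the paper: both reduce the statement to Theorem~\ref{thm:main2} and Corollary~\ref{cor:main3}, with the only real work being the verification of $D^2>0$ and $H^1(S,D-3E)=0$ via Lemma~\ref{lem:mori cone1}, including the same boundary case ($a=1$, $e=2$, i.e.\ the paper's $k=e'=1$) where $D-3E=\mathbf h-E=E'$, and the same appeal to Lemma~\ref{lem:non-surjective} for the non-surjectivity of $\pi_{E/S}(E)$. The one step you leave implicit is that in the case $ae\ge 4$ the bigness of $D-3E$ needs $D^2>6$, which your displayed bound $D^2\ge 4$ does not give on its face but which does hold there (one checks $D^2\ge 10$ when $ae\ge 4$); alternatively you could bypass Kodaira--Ramanujam entirely by citing Lemma~\ref{lem:mori cone1}(5) for the nef effective divisor $D-3E$, as the paper does.
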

\begin{proof}
(1) follows from
Corollary~\ref{cor:main3} and Lemma~\ref{lem:mori cone1}.
We prove (2). For an invertible sheaf $L\sim x\mathbf h+yE$ on $S$,
we have $L.E=0$ if and only if $ex-2y=0$.
This implies that if $L.E=0$,
then the class of $L$ in $\Pic S$ is spanned
by the class $2\mathbf h+eE$ if $e$ is odd, 
and the class $\mathbf h+(e/2)E$ otherwise.
By assumption, there exists an integer $k \ge 1$
such that $D$ is linearly equivalent to the class
$$
\begin{cases}
  k(2\mathbf h+eE)+E & (\mbox{if $e$ is odd}), \\
  k(\mathbf h+(e/2)E)+E & (\mbox{otherwise}).
\end{cases}
$$
Since $(D-E.E)=0$, we have
$D^2=(D-E)^2+E^2=(D-E)^2-2>0$.
We show that $H^1(S,D-3E)=0$.
Suppose that $e$ is even and put $e=2e'$ ($e'\ge 1$).
If $k \ge 2$ or $e'\ge 2$, then 
$D-3E$ is nef and effective, because
$(D-3E.E)=4$ and $(D-3E.E')=k\mathbf h.E'+(ke'-2)E.E'>0$.
If $k=1$ and $e'=1$, then $D-3E=\mathbf h-E=E'$.
Thus we conclude that $H^1(S,D-3E)=0$ by Lemma~\ref{lem:mori cone1}.
Similarly, we can show that $H^1(S,D-3E)=0$ if $e$ is odd.
It follows from Lemma~\ref{lem:non-surjective} that
$\pi_{E/S}(E)$ is not surjective. 
Therefore, by Corollary~\ref{cor:main3},
$W_{C,S}$ is a generically non-reduced component of $\Hilb^{sc} V$.
\end{proof}

In Theorem~\ref{thm:Hilb with rational curve},
if $C\sim a\mathbf h+bE$ ($b \ne 0$ by assumption), then we have
$d=4a+be$ and $g=2a^2+abe-b^2+1$.
Given a class $(a,b)$ of $C$ in $\Pic S$,
one can check whether $D$ is nef or not
by using Lemma~\ref{lem:mori cone1}(3) together with
the method to compute the class of $E'$ shown in
Remark~\ref{rmk:class of E'}(2).

\begin{rmk}
 \label{rmk:Kleppe-Ottem}
 Kleppe and Ottem \cite{Kleppe-Ottem} have also studied
 the deformations of space curves lying on a smooth quartic surface.
 They have considered a smooth quartic surface $S\subset \mathbb P^3$ 
 containing a line $E$, and such that
 $\Pic S \simeq \mathbb Z\mathbf h\oplus \mathbb ZE$,
 and a curve $C \subset S$ of degree $d>16$ and genus $g$,
 not a complete intersection in $S$, satisfying $D:=C-4\mathbf h\ge 0$.
 Then they have proved that
 if $D.E \ge -1$ or $D\sim E$,
 then $W_{C,S}$ becomes a generically smooth component of
 $\Hilb^{sc} \mathbb P^3$, and 
 if $D.E \le -2$ (i.e., $D.E=-2,-3$, or $-4$), $D\ne E$,
 $d \ge 21$ and $g> \min\left\{G(d,5)-1,d^2/10+21\right\}$,
 then $W_{C,S}$ becomes a generically non-reduced component of 
 $\Hilb^{sc} \mathbb P^3$.
 Here $G(d,5)$ denotes the maximum genus of curves of degree $d$ 
 not contained in a surface of degree $4$.
 More recently, we have been informed by Kleppe that if $D.E \le -2$, then
 the assumption on the genus $g$ is almost always satisfied
 (with several exceptions of the classes of $C$ in $\Pic S \simeq \mathbb Z^2$).
 See \cite{Kleppe-Ottem} for the details.
 \end{rmk}

\begin{rmk}
  For a space curve $C \subset \mathbb P^3$, 
  $s(C)$ denotes the minimal degree of surfaces containing $C$.
  An irreducible closed subset $W$ of $\Hilb^{sc} \mathbb P^3$
  is called {\em $s$-maximal},
  if a general member $C$ of $W$ is contained in a surface of 
  degree $s(C)=s$ (i.e. $s(W)=s$), and
  if we have $s(W')>s(W)$
  for any closed irreducible subset $W' \subset \Hilb^{sc} \mathbb P^3$ 
  strictly containing $W$ (cf.~\cite{Kleppe87}).
  If $C \subset \mathbb P^3$ is contained in a surface $S\subset \mathbb P^3$
  of degree $s=s(C)$, then every $s$-maximal subset is 
  a $S$-maximal family, i.e. the image of some irreducible component
  of $\HF^{sc} \mathbb P^3$ passing through $(C,S)$.
  It is known that if a very general curve $C$ of a $4$-maximal family
  $W$ sits on a smooth quartic surface $S$ and $d(C)>16$,
  then the Picard number is at most $2$
  (cf.~\cite[Remark 2.3]{Kleppe-Ottem}).
\end{rmk}

We give infinitely many examples of generically non-reduced 
components of $\Hilb^{sc} V_4$,
which contains Example~\ref{ex:non-reduced}(1)
as a special case ($n=2$).

\begin{ex}
  \label{ex:non-reduced V_4}
  Let $V_4 \subset \mathbb P^4$ be a smooth quartic $3$-fold,
  $E \simeq \mathbb P^1$ a conic on $V_4$ with trivial normal bundle
  $N_{E/V_4} \simeq \mathcal O_{\mathbb P^1}^2$,
  $S$ a smooth hyperplane section of $V_4$ containing $E$
  and such that 
  $\Pic S=\mathbb Z\mathbf h\oplus \mathbb ZE$.
  We consider a complete linear system 
  $\Lambda_n:=|n(\mathbf h+E)|$ on $S$ 
  for an integer $n \ge 2$. 
  Then $\Lambda_n$ is base point free and 
  a general member $C$ of $\Lambda_n$ is a smooth connected curve 
  of degree $6n$ and genus $3n^2+1$.
  Let $W_n$ be the $S$-maximal family $W_{C,S}$ of curves containing $C$.
  Since $D.E=(C-\mathbf h.E)=-2$ and $N_{E/{V_4}}$ is globally generated,
  $W_n$ becomes generically non-reduced components
  of $\Hilb^{sc} V_4$ of dimension $3n^2+4$
  by Theorem~\ref{thm:Hilb with rational curve}.
\end{ex}

\begin{rmk}
It was proved in \cite[Theorem 1.3]{Mukai-Nasu} that
for every smooth cubic $3$-fold $V_3 \subset \mathbb P^4$,
the Hilbert scheme $\Hilb^{sc} V_3$
contains a generically non-reduced component $W$ of dimension $16$.
Then every general member $C$ of $W$ is
a smooth connected curve of degree $8$ and genus $5$, 
and for each $C$,
there exist a smooth hyperplane section $S_3$ of $V_3$
and a line $E$ on $S_3$ satisfying
the linear equivalence
$C \sim 2\mathbf h+2E \sim -K_{V_3}\big{\vert}_{S_3}+2E$.
It was also proved that
if $N_{E/V_3}$ is trivial 
(in other words, $E$ is a {\em good line} on $V_3$), 
then $C$ is primarily obstructed.
We refer to \cite{Nasu4} for a generalization 
to a smooth del Pezzo $3$-fold $V_n$ of degree $n$.
\end{rmk}

We have a similar result for curves lying 
on an elliptic quartic surface.
We have the next theorem as a consequence of
Lemma~\ref{lem:mori cone2}, Lemma~\ref{lem:non-surjective}, 
and Corollary~\ref{cor:main3}.
\begin{thm}
  \label{thm:Hilb with elliptic curve}
  Let $V=\mathbb P^3$ or $V=V_4$, and let $S,C$ be as above.
  Suppose that there exists
  a smooth elliptic curve $F$ of degree $e \ge 4$ on $S$ such that
  $\Pic S=\mathbb Z\mathbf h\oplus \mathbb ZF$.
  Let $F'$ be as in Lemma~\ref{lem:mori cone2} and we assume that
  $D:=C+K_V\big{\vert}_S$ is effective.
  \begin{enumerate}
    \item If $D \not\sim kF$ and $D \not\sim kF'$ for $k \ge 2$, then
    $W_{C,S}$ is a generically smooth component of $\Hilb^{sc} V$.
    \item If $D \sim 2F$ or $D\sim 2F'$, then 
    $W_{C,S}$ is a generically non-reduced component 
    of $\Hilb^{sc} V$.
  \end{enumerate}
\end{thm}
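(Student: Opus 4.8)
The plan is to follow the proof of Theorem~\ref{thm:Hilb with rational curve}, using the two \emph{elliptic} curves $F,F'$ spanning the extremal rays of $\mori(S)$ (Lemma~\ref{lem:mori cone2}) in the role played there by the $(-2)$-curves $E,E'$. Two remarks are used throughout. First, $D := C + K_V\big{\vert}_S$ is nonzero: $D \sim 0$ would give $C \sim 4\mathbf{h}$ (when $V = \mathbb{P}^3$) or $C \sim \mathbf{h}$ (when $V = V_4$), making $C$ a complete intersection in $S$, contrary to hypothesis. Second, by the discussion preceding Theorem~\ref{thm:Hilb with rational curve} the flag scheme $\HF V$ is nonsingular at $(C,S)$ and $W_{C,S}$ has dimension $\dim A^1(C,S)$; hence all assertions about $W_{C,S}$ will follow from Corollary~\ref{cor:main3} once $h^1(S,D)$ (and, in (2), the non-surjectivity of the relevant $\pi$-map) is determined.

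For (1): $D$ is a nonzero effective divisor with $D \not\sim kF$ and $D \not\sim kF'$ for every $k \ge 2$, so Lemma~\ref{lem:mori cone2}(4) yields $H^1(S,D) = 0$. Then Theorem~\ref{thm:main2}(1) gives that $\Hilb^{sc} V$ is nonsingular at $[C]$, and Corollary~\ref{cor:main3}(a),(b) (the case $h^1(S,D) = 0$) shows that $W_{C,S}$ is an irreducible component of $(\Hilb^{sc} V)_{\red}$ along which $\Hilb^{sc} V$ is generically smooth.

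For (2): it suffices to treat $D \sim 2F$, the case $D \sim 2F'$ being identical because $F'$ is, by Lemma~\ref{lem:mori cone2}(2), again a smooth elliptic curve on $S$, of degree $\ge 4$. Writing $D \sim mF$ with $m = 2$, Theorem~\ref{thm:main2}(3) gives $h^1(S,D) = m - 1 = 1$ and reduces the singularity of $\Hilb^{sc} V$ at $[C]$ to the failure of surjectivity of $\pi_{F/S}(F)$. This failure is provided by Lemma~\ref{lem:non-surjective}: if $V = \mathbb{P}^3$, by case~(a); if $V = V_4$, by case~(c), whose hypothesis --- a first order deformation $\tilde S$ of $S$ carrying no deformation $\tilde F$ of $F$ --- holds for the same reason that such a $\tilde S$ exists for $C$, namely that $F$, being a smooth elliptic curve of degree $e \ge 4$, is not a complete intersection in $S$ (a complete intersection curve on a quartic surface has genus at least $3$). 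Since $C$ is thereby obstructed, Corollary~\ref{cor:main3}(a),(b) (the case $h^1(S,D) = 1$) then gives that $W_{C,S}$ is an irreducible component of $(\Hilb^{sc} V)_{\red}$ along which $\Hilb^{sc} V$ is generically non-reduced.

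Thus the argument is a direct assembly of Lemma~\ref{lem:mori cone2}, Lemma~\ref{lem:non-surjective}, Theorem~\ref{thm:main2} and Corollary~\ref{cor:main3}, and I do not anticipate a serious obstacle. The step that deserves some care is the transfer, in case (2) with $V = V_4$, of the first order deformation hypothesis from $C$ to the elliptic curve $F$ (or $F'$): one must verify that a general first order deformation of $S$ inside $V_4$ does not contain a deformation of $F$, and this is where the numerical condition $e \ge 4$ is essential, as it keeps the class of $F$ (resp.\ $F'$) off the line $\mathbb{Z}\mathbf{h}$ in $\Pic S$.
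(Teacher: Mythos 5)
Your proof is correct and is essentially the paper's own argument: the paper gives no details beyond the sentence that the theorem is ``a consequence of Lemma~\ref{lem:mori cone2}, Lemma~\ref{lem:non-surjective}, and Corollary~\ref{cor:main3}'', and your assembly of exactly these ingredients --- Lemma~\ref{lem:mori cone2}(4) to compute $h^1(S,D)$, Theorem~\ref{thm:main2}(1) and (3), Lemma~\ref{lem:non-surjective}(a) for $V=\mathbb P^3$ and (c) for $V=V_4$, then Corollary~\ref{cor:main3}(a),(b) --- is precisely what is intended. The only (inessential) slip is your closing remark: what keeps $F$ and $F'$ from being complete intersections is $F^2=0\neq 4k^2=(k\mathbf h)^2$, which holds for any degree, whereas $e\ge 4$ is needed for the description of $\mori(S)$ in Lemma~\ref{lem:mori cone2} itself.
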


\begin{rmk}
  In Theorem~\ref{thm:Hilb with elliptic curve}, we have
  \begin{enumerate}
    \item If $C\sim a\mathbf h+bF$ 
    ($b \ne 0$ by assumption), then
    $d=4a+be$ and $g=2a^2+abe+1$.
    \item Theorem~\ref{thm:main2} shows that
    if $D\sim mE$ for some integer $m\ge 2$ and an elliptic curve $E$ on $S$,
    then $C$ is obstructed.
    In this case, we have $h^1(S,D)=m-1$.
    However, Theorems~\ref{thm:main1} and \ref{thm:main2}
    are not sufficient 
    for proving that 
    $W_{C,S}$ is an irreducible component
    of $(\Hilb^{sc} \mathbb P^3)_{\red}$ 
    (or $(\Hilb^{sc} V_4)_{\red}$)
    for $m > 2$.
  \end{enumerate}
\end{rmk}

\begin{ex}
  \label{ex:non-reduced P^3}
  Let $V=\mathbb P^3$, $S$ a smooth quartic surface containing
  a smooth elliptic curve $F$.
  Then the complete linear system $\Lambda:=|4\mathbf h+2F|$ on $S$
  is base point free and every general member $C$ of $\Lambda$ 
  is a smooth connected curve on $S$.
  Then $C$ is obstructed by Theorem \ref{thm:main2}, 
  and moreover, $W_{C,S}$ is a generically
  non-reduced component of $\Hilb^{sc} \mathbb P^3$
  by Corollary~\ref{cor:main3}.
\end{ex}

One can compare this example 
with Mumford's example \cite{Mumford} of a generically 
non-reduced component $W$ of $\Hilb^{sc} \mathbb P^3$,
whose general member $C$ is contained in a smooth cubic surface $S$ and
$C\sim 4\mathbf h+2E$ on $S$ for a line $E \subset S$.

\begin{thm}
  \label{thm:Hilb without rational nor elliptic curve}
  Let $V=\mathbb P^3$ or $V=V_4$, and let $S,C$ be as above.
  Suppose that there exist no $(-2)$-curves
  nor elliptic curves on $S$.
  If $D:=C+K_V\big{\vert}_S$ is effective, then
  $W_{C,S}$ is a generically smooth component of $\Hilb^{sc} V$.
\end{thm}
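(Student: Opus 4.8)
The plan is to derive this as an immediate application of Theorem~\ref{thm:main2}(1) together with Corollary~\ref{cor:main3}; the only work is to verify their hypotheses in the present situation. First, the standing hypothesis of Theorem~\ref{thm:main2} --- the existence of a first order deformation $\tilde S$ of $S$ containing no first order deformation $\tilde C$ of $C$ --- is part of the assumptions ``as above'': it holds because $C$ is not a complete intersection in $S$ (cf.\ the discussion preceding Theorem~\ref{thm:Hilb with rational curve}, and Lemma~\ref{lem:flag of k3fano}(3)). By the same standing assumptions $H^0(S,-D)=0$, since $d>16$ (resp.\ $d>4$).

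Second, I would check that $H^1(S,D)=0$. Since $S$ is a $K3$ surface with no $(-2)$-curves and no elliptic curves, every nonzero effective divisor on $S$ is ample, i.e.\ $\eff(S)=\amp(S)$ --- this is the structural fact recalled at the start of the proof of Theorem~\ref{thm:main2}(1). The divisor $D=C+K_V\big{\vert}_S$ is effective by hypothesis, and $D\ne 0$: otherwise $C\sim -K_V\big{\vert}_S$, which for $V=\mathbb P^3$ equals $4\mathbf h$ (a complete intersection in $S$, contradicting our assumption on $C$), and for $V=V_4$ has degree $4$ (contradicting $d>4$). Hence $D$ is ample, so $H^1(S,D)=0$ by the Kodaira--Ramanujam vanishing theorem (recall $K_S$ is trivial); alternatively one may invoke Lemma~\ref{lem:K3}(2).

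With $H^1(S,D)=0$ in hand, Theorem~\ref{thm:main2}(1) applies and shows that $\Hilb^{sc} V$ is nonsingular at $[C]$. To upgrade this to the statement about $W_{C,S}$, I would invoke Corollary~\ref{cor:main3} (we are in case (1) of Theorem~\ref{thm:main2}, via the clause ``$H^1(S,D)=0$''): since $h^1(S,D)=0\le 1$, part~(a) gives that $W_{C,S}$ is an irreducible component of $(\Hilb^{sc} V)_{\red}$, and part~(b) in the case $h^1(S,D)=0$ gives that $\Hilb^{sc} V$ is generically smooth along $W_{C,S}$. Combining these, $W_{C,S}$ is a generically smooth component of $\Hilb^{sc} V$, which is the assertion. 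There is essentially no obstacle beyond bookkeeping; the one point that needs a moment's thought is the vanishing $H^1(S,D)=0$, and even that reduces to the cited description of the effective cone of a $K3$ surface without $(-2)$-curves or elliptic curves.
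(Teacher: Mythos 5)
Your proof is correct and follows exactly the route the paper intends: the paper states this theorem without a separate proof, treating it as an immediate consequence of Theorem~\ref{thm:main2}(1) and Corollary~\ref{cor:main3}, using the observation in \S\ref{subsec:mori cone of quartics} that $\eff(S)=\amp(S)$ (hence $H^1(S,D)=0$ for $D\ge 0$) when $S$ carries no $(-2)$-curves and no elliptic curves. Your verification of the remaining hypotheses ($D\ne 0$, $H^0(S,-D)=0$, and the existence of $\tilde S$ not containing any $\tilde C$) matches the standing assumptions set out before Theorem~\ref{thm:Hilb with rational curve}.
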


See Example~\ref{ex:quartic without P^1's nor elliptics} 
for an example of a smooth quartic surface 
not containing any $(-2)$-curves nor elliptic curves.

\section*{Acknowledgments}
I should like to thank Prof.~Eiichi Sato for asking me a useful question
concerning the existence of a non-reduced component of
the Hilbert scheme of a smooth Fano $3$-fold of index one.
I am grateful to Prof.~Jan Oddvar Kleppe for very 
helpful comments and a discussion
on non-reduced components of the Hilbert scheme of space curves.
Also thanks to the referee for his/her helpful comments.
This work was partially supported by JSPS Grant-in-Aid (C), No 25400048, 
and JSPS Grant-in-Aid (S), No 25220701.

\end{document}